\documentclass[11pt,
]{amsart}

\usepackage{
amssymb, graphicx
}

\usepackage{xcolor,cite}
 
 \setlength{\marginparwidth}{0.6in}

\date{\today}

\setlength{\oddsidemargin}{0.0in}
\setlength{\evensidemargin}{0.0in}
\setlength{\textwidth}{6.5in}
\setlength{\topmargin}{0.0in}
\setlength{\textheight}{8.5in}

\numberwithin{equation}{section}

\newtheorem{theorem}{Theorem}[section]
\newtheorem{proposition}{Proposition}[section]
\newtheorem{lemma}{Lemma}[section]

\theoremstyle{definition}
\newtheorem{remark}{Remark}[section]

\DeclareMathOperator{\Vol}{Vol}

\DeclareMathOperator{\WF}{WF}

\newcommand{\eps}{\varepsilon}

\newcommand{\PP}{P}
\newcommand{\SH}{S \hspace{-.1em} H}
\newcommand{\SV}{SV}

\newcommand{\R}{\mathbf{R}}
\newcommand{\C}{\mathbf{C}}
\newcommand{\Id}{\textrm{\rm Id}}
\renewcommand{\r}[1]{(\ref{#1})}
\newcommand{\PDO}{{\rm $\Psi$DO}}
\newcommand{\be}[1]{\begin{equation}\label{#1}}
\newcommand{\ee}{\end{equation}}

\newcommand{\p}{{\partial}}
\newcommand{\foliation}{\mathsf{x}}

\renewcommand{\d}{\mathrm{d}}

\renewcommand{\i}{\mathrm{i}}

\newcommand{\bo}{\partial \Omega}

\newcommand{\bl}{\mathrm{b}}
\newcommand{\loc}{\mathrm{loc}}

\newcommand{\Hbloc}{H_{\bl,\loc}}


\newcommand{\gzero}{g}


\title[The  transmission problem in linear isotropic elasticity]{The  transmission problem in linear isotropic elasticity}

\author[P. Stefanov]{Plamen Stefanov}
\address{Department of Mathematics, Purdue University, West Lafayette, IN 47907}
\thanks{First author partly supported by  NSF  Grant DMS-1600327}

\author[G. Uhlmann]{Gunther Uhlmann}
\address{Department of Mathematics, University of Washington, Seattle, WA 98195, Department of Mathematics University of Helsinki, Finland, IAS, HKUST, Clear Water Bay, Hong Kong, China}
\thanks{Second author partly supported by NSF Grant CMG-1025259 and DMS-1265958,  and a Simons fellowship}

\author[A. Vasy]{Andras Vasy}
\address{Department of Mathematics, Stanford University, Stanford  CA 94305}
\thanks{Third author partly supported by  DMS-1664683.}

\begin{document}
\begin{abstract} 
We study the isotropic elastic wave equation in a bounded domain with boundary with coefficients having jumps at a nested set of interfaces satisfying the natural transmission conditions there. We analyze in detail the microlocal behavior of such solution like reflection, transmission and mode conversion of S and P waves, evanescent modes, Rayleigh and Stoneley waves. In particular, we recover Knott's equations in this setting. 
We show that   knowledge of the Dirichlet-to-Neumann map determines uniquely the speed of the P and the S waves  if there is a strictly convex foliation with respect to them,   under an additional condition of lack of full internal reflection of some of the waves. 
\end{abstract} 
\maketitle

\section{Introduction}  
The main goal of this work is to study 
the transmission problem in  isotropic linear elasticity. Let $\Omega\subset\R^3$ be a smooth  bounded domain. 
Let $\Gamma_1,\dots, \Gamma_k$ be closed disjoint smooth surfaces (interfaces) splitting $\Omega$ into subdomains $\Omega_k$ with exterior boundary $\Gamma_{k-1}$ (with $\Gamma_0:=\bo$) and interior one $\Gamma_k$, see Figure~\ref{pic_el_waves}, left.    Assume that the density $\rho$ and the Lam\'e  parameters $\mu$, $\nu$ are smooth up to those surfaces with possible jumps there. We also assume that at at every point, at least one coefficient has a non-zero jump. We impose the following transmission conditions
\be{tr}
[u]= 0, \quad [Nu]=0 \quad\text{on $\Gamma_j$, $j=1,\dots,k$},
\ee
where $[v]$ stands for the jump of $v$ from the exterior to the interior  across any of those surfaces, and $Nf$ are the normal components of the stress tensor, see \r{2a1}. 
We are motivated by the isotropic elastic model of the Earth where the density and the Lam\'e parameters jump across the boundary between the crust and the mantle, etc. 
We study the time-dependent elastic system, see \r{el_eq}.

The first goal of this paper is to describe qualitatively  the microlocal behavior of solutions of this problem. 
At any interface $\Gamma_i$, an incoming S or P wave can generate two reflected waves, one S wave and one P wave through mode conversion and two transmitted ones. Then each branch can generate four more, etc., see Figure~\ref{pic_el_waves}. In some cases, there might be a full internal reflection for one or both of the waves, and there could be no transmitted or reflected waves of a certain kind. In fact, the missing waves would be evanescent modes. 

\begin{figure}[!ht]
\includegraphics[page=8,scale=1]{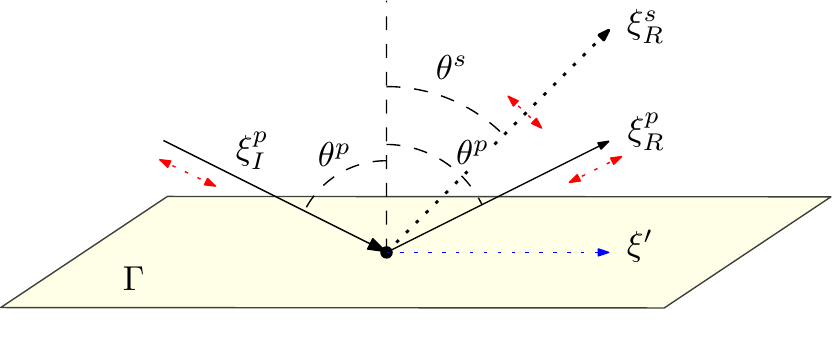}
\caption{Left: the domain $\Omega$ and the layers. Right: Propagation of rays from a single source and direction. P waves are denoted with a solid line; S waves are dotted.
}\label{pic_el_waves}
\end{figure}

While works   on geometric optics for the elasticity system  exist (no transmission) \cite{Brytik_11, HansenUhlmann03, Rachele_2000, Rachele00, Rachele03,SUV_elastic,  Bhattacharyya_18}, a comprehensive analysis of the transmission problem in linear elasticity has not been done to authors' knowledge. In case of a flat surface and constant coefficients, some cases have been analyzed in the geophysics literature, see, e.g.,  \cite{Petrashen_seismic_I, Petrashen_seismic_II, aki2002quantitative, Sheriff_Seismology, slawinski2003seismic}. In that case, if there is no full internal reflection, one looks for solutions in terms of potentials to reduce the number of variables; and the potentials of the four waves corresponding to an incoming one solve  a system which decouples into a $4\times4$ and a $2\times2$ one, see also \r{u_C4} and \r{u_C4a}. Those equations were derived by Knott \cite{Knott1899} and Zoeppritz \cite{Zoeppritz19} more than a century ago, see also \cite{aki2002quantitative}.  In a recent paper \cite{caday2019recovery}, the hyperbolic-hyperbolic (HH) case is analyzed for variable $\lambda(x)$, $\mu(x)$ and $\rho=1$ but the construction for a curved boundary is  partial only.  The (HH) case is characterized by the wavefront of the Cauchy data on $\Gamma$: it could belong to projected S and P waves on either side of it, and in particular, there are no evanescent modes, see Section~\ref{sec_el_bvp}. This is just one of the many cases since we may have full internal reflection of some or both waves on one or both sides of $\Gamma$; and mode conversion to evanescent modes, see Section~\ref{sec_Tr_summary} for a summary. The most general study we are aware of is \cite{Yamamoto_elastic_89} where the coefficients are constant but cases other than the (HH) one are considered, even though not as extensively as we do it in this paper.

We analyze the general case of variable coefficients and a curved interface in all cases, away from glancing rays. We are interested in two main questions: is the problem well posed microlocally; and  (control) can we create every configuration on one side with suitably  chosen waves on the other. By doing that, we also compute the principal parts of the reflected and the transmitted waves. The microlocal well posedness reduces to showing the ellipticity of some \PDO\ system on $\Gamma$ with not particularly simple looking entries. Its solution  serves as initial conditions for the corresponding transport equations for the hyperbolic of for the evanescent modes. In the flat, constant coefficients case, this system is actually the computation giving us the whole solution.  Going back to the general case, in the (HH) microlocal region, we have four outgoing waves, each one being  3D vector-valued. This gives as a $12\times12$ \PDO\ system for showing-well posedness. If we allow both S and P waves coming from both sides, we would have a $12\times 24$ system which we want to solve for some group of variables. The control question is reduced to solving the same system with a rearrangement of the unknowns: we are given the waves on one side and want to solve for the waves on the other. 

Doing this analysis with brute force does not seem to be a promising approach. Instead, we look for inspiration in the geophysics (and the existing math) literature using the flat constant coefficient case as a starting point. We express the P and the S waves in terms of potentials, as the divergence and as the curl of such potentials on a principal symbol level first; and we extend this to an arbitrary order.  We adapt this to the boundary value problem. Having such microlocal mode separation, we also split the S waves in the SV (shear-vertical) and SH (shear-horizontal) waves. This decomposition is valid on $\Gamma$ only, and depends on the point (and the codirection). Then we reduce those systems to more manageable decoupled $4\times4$ plus $2\times2$ ones for the outgoing solutions given the incoming ones;  their extended versions are $4\times 8$ plus $2\times 4$ ones, see \r{main_system} and \r{2x2}. If the boundary is flat and the coefficients are constant, those are exactly Knott's equations \cite{Knott1899}. Their ellipticity, needed to show well posedness, turns out to be a consequence of energy preservation (even though the determinant can be computed and analyzed \cite{aki2002quantitative}), another observation due to Knott. Ellipticity needed to show control can be verified easily and follows from the microlocal well posedness of the boundary value Cauchy problem. 

We do this analysis in all microlocal cases with some or even all waves being evanescent; in that case we call them modes.  The corresponding matrix symbols do not need to be recomputed; we just need to be careful which imaginary square roots to chose. Ellipticity based on energy preservation needs modifications though. Evanescent waves do not carry (high frequency) energy on the principal symbol level, at least. 

We do such analysis for the boundary value problem for the outgoing solutions as well with Dirichlet or Neumann, homogeneous or not, boundary conditions. We also analyze the microlocal  boundary value Cauchy problem. We start with the (principally scalar) acoustic equation first for two reasons: it is a needed ingredient in the analysis of the elastic system and SH waves behave as acoustic ones (no mode conversion). 

We also study the surface waves propagating along the boundary (Rayleigh waves) or along an internal interface $\Gamma$ (Stoneley waves).  Taylor \cite{Taylor-Rayleigh} characterized Rayleigh waves as a propagation of singularities phenomenon when $n=2$ and $\bo$ is flat, and he also mentions that the analysis applies to the general case as well. The existence of such waves is due to lack of ellipticity of the  Dirichlet-to-Neumann (DN) operator in the elliptic region on $\bo$ and in the elliptic-elliptic one on an internal interface. Restricted to the surface $\bo$ or $\Gamma$, they solve a real principal type of system; and the solution extends as an evanescent one in $\bar \Omega$.  Yamamoto \cite{Yamamoto_elastic_89} viewed Stoneley waves in a similar fashion. A more detailed analysis of the Rayleigh and the Stoneley waves will appear in a work of Y.~Zhang. 

We also present an application of this analysis to the inverse problem of recovering the coefficients form the outgoing DN  map. We recover first the lens relation associated with incoming S and P waves in the first layer $\Omega_1$; then we use the recent results by the authors  \cite{SUV_localrigidity} about local recovery of a sound speed (or a conformal factor) from localized travel times. By \cite{Bhattacharyya_18}, we can recover $\rho$ in $\Omega_1$ as well, therefore we can recover all three coefficients $\mu$, $\lambda$ and $\rho$ there. 
In  \cite{SUV_localrigidity} we prove conditional H\"older stability as well which makes this approach for the inverse problem in this paper potentially stable as well; when it can be applied. 
 In the case of no internal interfaces, this was done in \cite{SUV_elastic}. 
The inverse problem for  transversely anisotropic media  is studied in \cite{Vasy2019recovery}. The presence of  interfaces however complicates the geometry considerably, see  Figure~\ref{pic_el_waves} for the recovery of the coefficients in the deeper layers. The lens relation corresponding to a single S or P wave (ray) is multi-valued in general and there is no direct way to tell which branch is coming from which layer, roughly speaking. This makes the inverse problem much different. An essential difficulty following this approach is that there could be totally  internally reflected rays in the interior side of one interface which never get out, not even through mode conversion. Then they cannot be generated by rays from the exterior (by ``earthquakes''). We show that if there is no  total internal reflection of S waves on the interface $\Gamma_1$ (from the interior),  we can recover $c_s$ below it. This is more general than the result in \cite{caday2019recovery} where $\rho=1$, and there is the implicit assumption that there is no full reflection of S \textit{and} P waves. Since we do not recover all three coefficients below the first interface, we use arguments based on the geometry and the directions of the polarization only, which depend on the speeds only. Next, we also show that if there is no  total internal reflection of $P$ waves as well, one can recover $c_p$ in $\Omega_2$. Those arguments can be used to get even deeper into $\Omega$ with the appropriate assumptions on the speeds.

\section{Preliminaries} 

\subsection{The elastic system} 
The isotropic elastic system in a smooth bounded domain $\Omega\subset \R^3$ is described as follows. The elasticity  tensor is defined by
\[
c_{ijkl} = \lambda \delta_{ij}  \delta_{kl} +\mu(\delta_{ik}\delta_{jl} + \delta_{il}\delta_{jk}),
\]
where $\lambda$, $\mu>0$ are the Lam\'e parameters. 
Assume for now that the coefficients $\lambda$, $\mu$ and $\rho$ are smooth in $\bar \Omega$.  The elastic wave operator is given by
\[
(Eu)_i = \rho^{-1}\sum_{jkl} \partial_j c_{ijkl} \partial_l u_k,
\]
where $\rho>0$ is the density and the vector function $u$ is the displacement.  
The corresponding elastic wave equation is given by 
\be{el_eq}
u_{tt}-Eu=0,
\ee
see, e.g., \cite{slawinski2003seismic}. 
The stress tensor $\sigma_{ij}(u)$ is defined by
\be{1s0}
\sigma_{ij}(u) = \lambda \nabla\cdot u\delta_{ij} + \mu(\partial_j u_i + \partial_i u_j). 
\ee
Note that $Eu = \rho^{-1}\delta\sigma(u)$, where $\delta$ is the divergence of the 2-tensor $\sigma(u)$. 

The Dirichlet boundary condition for $E$ is prescribing $u$ on the boundary; while the natural Neumann boundary condition is to prescribe the normal components of the stress tensor
\be{2a1}
 Nu:=  \sum_j \sigma_{ij}(u)\nu^j\big|_{\bo},
\ee
where $\nu$ is the outer unit normal on $\bo$. This is the operator appearing in the Green's formula \r{9G} for $E$ but also has the physical meaning as the infinitesimal deformation of the material in normal direction. 

Let $\Gamma$ be a smooth surface where the coefficients $\rho$, $\lambda$, $\mu$ may jump. The physical transmission conditions across $\Gamma$ are the following. First,  kinematic ones:   the displacements $u$ on both sides of $\Gamma$ should  match (no slipping of the material w.r.t.\ each other); and second, dynamical ones:  the normal components $Nu$ on both sides should match (same traction). Therefore, if we declare one side of $\Gamma$ external and the other one internal, and denote by $[u]_\Gamma$ the jump of $u$ across $\Gamma$ from the exterior  to the interior, we obtain the transmission conditions \r{tr} on $\Gamma$. Note that in $[Nu]$, the operator $N$ depends on $\rho$, $\mu$ and $\lambda$ and has different coefficients on each side of $\Gamma_j$.

The operator $E$ is symmetric on $L^2(\Omega;\mathbf{C}^3,\rho\,\d x)$. It has a principal symbol
\be{s0}
\sigma_p(-E)v  = \frac{\lambda+\mu}{\rho} \xi (\xi\cdot v) + \frac{\mu}{\rho} |\xi|^2 v,\quad v\in\mathbf{C}^n,
\ee
which can be also written as 
\be{s0'}
\sigma_p(-E)v  = \frac{\lambda+2\mu}{\rho} \xi (\xi\cdot v) + \frac{\mu}{\rho} \left(|\xi|^2 -\xi\xi\cdot\right)v. 
\ee

Taking $v=\xi$ and $v\perp\xi$, we recover the well known fact that $\sigma_p(-E)$ has eigenvalues $c_p^2$ and $c_s^2$ with 
\be{speeds} 
c_p= \sqrt{(\lambda+2\mu)/\rho}, \quad c_s = \sqrt{\mu/\rho}
\ee
of multiplicities $1$ and $2$ and eigenspaces $\R\xi$, and $\xi^\perp$, respectively. 
We have $c_s<c_p$. Those are known as the speeds of the P waves and the S waves, respectively.  
 The eigenspaces correspond to  the polarization of those waves. The characteristic variety $\det \sigma_p((\partial_t^2-E)) =0$ is the union of $\Sigma_p := \{\tau^2=c_p^2|\xi|^2\}$ and $\Sigma_s := \{\tau^2=c_s^2|\xi|^2\}$, each one  having two connected components (away from the zero section), determined by the sign of $\tau$.

Let $u$ solve the elastic wave equation 
\be{1}
\begin{cases}
u_{tt} -Eu &=0\quad \text{in $\R\times\Omega$},\\ 
 \ u|_{\R\times\bo}   &=f,\\
\ \ \ \  u|_{t<0}&=0,
\end{cases}
\ee
with $f$ given so that $f=0$ for $t<0$ and all coefficients smooth in $\Omega$ (no transmission interfaces). The (outgoing) Dirichlet-to-Neumann $\Lambda$ map is defined by
\be{2a}
(\Lambda f)_i = (Nu)_i =  \sum_j \sigma_{ij}(u)\nu^j\big|_{\bo},
\ee
see \r{2a1}, where $\nu$ is the outer unit normal on $\bo$, and $\sigma_{ij}(u)$ is the stress tensor \r{1s0}. 

%
%

\subsection{An invariant metric based formulation} \label{sec_m}
We have 
\be{E}
(Eu)_i = \rho^{-1} (  \partial_i\lambda\partial_j u_j + \partial_j\mu \partial_j u_i + \partial_j\mu \partial_i u_j     ),
\ee 
where we sum over repeating indices even if they are both lower or upper. 
This can also be written in the following divergence form 
\be{L0}
Eu = \rho^{-1}(  \d \lambda \delta u+ 2 \delta \mu \d^s u   ),
\ee
where $\d^su=(\partial_ju_i+ \partial_iu_j)/2$ is the symmetric differential, and $\delta= -(\d^s)^*$ is the divergence of symmetric fields with the  adjoint in $L^2$ sense.

To prepare ourselves for changes of variables needed in the analysis near surfaces that we will flatten out, we will write $E$ in an invariant way in the presence of a Riemannian metric $g$. We view $u$ as an one form (a covector field) and we define the symmetric differential $\d^s$ and the divergence $\delta$ by
\[
(\d^s u)_{ij}= \frac12\left(\nabla_i u_j+\nabla_j u_i\right), \quad (\delta v)_i = \nabla^j v_{ij},\quad \delta u = \nabla^iu_i,
\]
where $\nabla$ is the covariant differential, $\nabla^j = g^{ij}\nabla_i$, $u$ is a covector field, and  $v$ is a symmetric covariant tensor field of order two.   Note that $\d^s$ increases the order of the tensor by one while $\delta$ decreases it by one. Then we define $E$ by \r{L0}. We still have $\delta= -(\d^s)^*$, where the adjoint is in the $L^2(\Omega,\d\Vol)$ space of contravariant tensor fields, see, e.g., \cite{Sh-book}. 

The stress tensor \r{1s0} is given by
\be{1s2}
\sigma(u) = \lambda (\delta u)g + 2\mu \d^s u,
\ee
and then $Eu=\rho^{-1}\delta\sigma(u)$. 
The Neumann boundary condition $Nu$ at $\bo$ is still given by prescribing the values of $\sigma_{ij}(u)\nu^j$ on it as in \r{2a}.   
The operator $E$, defined originally on 
$C_0^\infty(\Omega)$  extends to a self-adjoint operator in $L^2(\Omega, \rho\, \d\!\Vol)$. This extension is the one satisfying the zero Dirichlet boundary condition on $\R\times\bo$. In particular, this shows that the mixed problem \r{1} is solvable with regular enough  data $f$ at least since one can always extend $f$ inside and reduce the problem to solving one with a zero boundary condition and a non-zero source term; and then use the Duhamel's principle for the latter.

 The principal symbol of $E$ in the metric setting is still given by \r{s0} with the proper interpretation of the dot product there:
\be{s0g}
(\sigma_p(-E)v)_i  = \frac{\lambda+\mu}{\rho} \xi_i \xi^j v_j + \frac{\mu}{\rho} |\xi|_g^2 v,\quad v\in\C^n,
\ee
where $\xi^j=g^{jk}\xi_k$ as usual. 
In particular, the speeds $c_p$ and $c_s$ remain as in \r{speeds}. The eigenspaces of the symbol are still $\R\xi$ and $\xi^\perp$, the latter being the covectors normal to $\xi$. 
 Notice that under coordinate changes, the coordinate expression for $u$ changes as well, as a covector. 

We recall that the cross product on an oriented three dimensional Riemannian manifold is defined in the following way. If $\xi$ and $\eta$ are covectors at some fixed point $x$, then $\xi\times \eta$ is defined as the unique covector satisfying
\[
\langle \xi\times \eta,\zeta\rangle = \omega(g^{-1} \xi,g^{-1}\eta,g^{-1}\zeta),
\]
where $\langle\cdot, \cdot\rangle$ is the metric inner product of covectors, and $\omega$ is the volume form on the  tangent bundle. To compute it in local coordinates, let $\alpha=\xi\times\eta$. Then we get
\[
g^{ij}\alpha_i\zeta_j = (\det g)^{-\frac12}\det(\xi,\eta,\zeta),
\]
where the latter is the determinant of the matrix with the indicated columns (also, the Euclidean volume form of them). Therefore, $(\det g)^{1/2}g^{-1}\alpha$ equals the Euclidean cross product 
\[
(\det g)^{1/2}g^{-1}\alpha = (\xi_2\eta_3-\xi_3\eta_2, -\xi_1\eta_3+\xi_3\eta_1, \xi_1\eta_2-\xi_2\eta_1).
\]
This yields
\be{cross}
\xi\times\eta = (\det g)^{-\frac12}g (\xi_2\eta_3-\xi_3\eta_2, -\xi_1\eta_3+\xi_3\eta_1, \xi_1\eta_2-\xi_2\eta_1).
\ee
Similarly, the curl $\nabla\times u$ of a covector field $u$ is defined as  the Hodge star of the exterior derivative $\d u $, and we have
\be{curl}
\nabla\times u = (\det g)^{-\frac12}g (\partial_2u_3-\partial_3 u_2, -\partial_1 u_3+\partial_3u_1, \partial_1 u_2-\partial_2u_1). 
\ee
The divergence of $u$ is given by $\delta u = \nabla^i u_i$ and in particular, $\delta\nabla\times u=0$. We will use the notation $\nabla \cdot u$  for $\delta u$ as well. 

One can verify that the  double vector product of two covectors in the metric still satisfies   $\xi\times \eta\times \zeta = \langle \xi,\zeta\rangle \eta -\langle \xi,\eta\rangle \zeta$, as in the Euclidean case. 

\subsection{Existence of dynamics}
We assume now, as in the rest of the paper, that $\Omega$ can be expressed as a union  of layers as explained in the Introduction and  $\lambda$, $\mu$ and $\rho$ are smooth up to their boundaries with possible jumps at them. We also assume that $E$ is the metric based operator \r{L0}.

\begin{lemma}
Let $\lambda,\mu,\rho$ be as above. Then $E$, defined originally on functions smooth up to $\Gamma_1,\dots \Gamma_k$ and $\bo$, satisfying the transmission conditions \r{tr}, and zero boundary conditions on $\bo$, extends to a self-adjoint operator in $L^2(\Omega, \rho\,\d\!\Vol)$.
\end{lemma}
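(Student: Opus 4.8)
The plan is to realize $E$ through its natural elastic energy form and apply the first representation theorem (Friedrichs). On the Hilbert space $\mathcal H = L^2(\Omega;\C^3,\rho\,\d\Vol)$ of covector fields, consider the sesquilinear form
\be{Qform_plan}
Q(u,v) = \int_\Omega \Big( \lambda\, (\delta u)\,\overline{\delta v} + 2\mu\, \langle \d^s u, \d^s v\rangle_g \Big)\,\d\Vol ,
\ee
with form domain $\mathcal D(Q) = \{ u\in H^1(\Omega;\C^3):\ u|_{\bo}=0\}$. Since $\lambda,\mu$ are bounded on $\Omega$ (smooth up to each $\bar\Omega_i$), $Q$ is well defined and bounded on $\mathcal D(Q)$; and because an $H^1$ covector field has matching traces from the two sides of each interior hypersurface $\Gamma_j$, membership in $\mathcal D(Q)$ already builds in the kinematic condition $[u]=0$. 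Integrating by parts on each layer $\Omega_i$ and using $Eu=\rho^{-1}\delta\sigma(u)$, $\sigma(u)=\lambda(\delta u)g+2\mu\d^s u$ and $\delta=-(\d^s)^*$ in the $\d\Vol$ pairing, one gets, for $u$ smooth up to the $\Gamma_j$ and $\bo$ with $u|_{\bo}=0$ and any $v\in\mathcal D(Q)$,
\be{IBP_plan}
Q(u,v) = \langle -Eu,v\rangle_{\mathcal H} - \sum_{j=1}^k \int_{\Gamma_j}\langle [Nu],v\rangle\,\d S ,
\ee
the weight $\rho$ cancelling, the $\bo$-term vanishing because $v|_{\bo}=0$, and the two contributions of each $\Gamma_j$ (carried by the opposite outer normals of $\Omega_j$ and $\Omega_{j+1}$) combining, thanks to $[u]=0$, into the jump of $Nu$; so when in addition $[Nu]=0$ on every $\Gamma_j$ the interface terms drop and $Q(u,v)=\langle -Eu,v\rangle_{\mathcal H}$.

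Next I would verify that $Q$ is densely defined (it contains $C_c^\infty(\Omega;\C^3)$), symmetric (real coefficients), nonnegative, and closed. Nonnegativity uses the pointwise bound $|\delta u|^2 = |\tr_g \d^s u|^2 \le 3\,|\d^s u|_g^2$: if $\lambda\ge 0$ then $Q(u,u)\ge 0$, and in general $Q(u,u)\ge \int (3\lambda+2\mu)\,|\d^s u|_g^2\,\d\Vol\ge 0$ under the standing ellipticity hypothesis on the Lam\'e parameters, $\mu$ (hence $3\lambda+2\mu$) being bounded below by a positive constant on the compact $\bar\Omega$. Closedness is where Korn's inequality enters: Korn's second inequality on the smooth bounded domain $\Omega$ gives $\|u\|_{H^1}^2\le C\big(\|u\|_{L^2}^2+\|\d^s u\|_{L^2}^2\big)$ for $u\in H^1(\Omega;\C^3)$ — a coefficient-free statement, hence insensitive to the jumps of $\lambda,\mu$ — so together with the lower bound for $Q$ the form norm $\big(Q(u,u)+\|u\|_{\mathcal H}^2\big)^{1/2}$ is equivalent to the $H^1$-norm on $\mathcal D(Q)$; and since $\mathcal D(Q)$ is a closed subspace of $H^1(\Omega;\C^3)$ (the trace on $\bo$ being continuous), it is complete in the form norm, i.e.\ $Q$ is closed. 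The representation theorem then yields a unique nonnegative self-adjoint operator $A$ on $\mathcal H$ with $\langle Au,v\rangle_{\mathcal H}=Q(u,v)$ for $u\in\mathcal D(A)$, $v\in\mathcal D(Q)$; put $E:=-A$, self-adjoint on $\mathcal H$.

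Finally I would check that this $E$ extends the operator of the statement. If $u$ is smooth up to the $\Gamma_j$ and $\bo$ with $[u]=0$, $[Nu]=0$ and $u|_{\bo}=0$, then $u\in\mathcal D(Q)$, and by \r{IBP_plan} $Q(u,v)=\langle -Eu,v\rangle_{\mathcal H}$ for all $v\in\mathcal D(Q)$ with $-Eu\in\mathcal H$; by the defining property of $A$ this gives $u\in\mathcal D(A)$ and $Au=-Eu$, so $E$ restricts to the original operator on such $u$. (Running \r{IBP_plan} with two such functions and $[Nu]=0$ also shows the original operator is symmetric.) Moreover, conversely, varying $v$ near each $\Gamma_j$ and using that $H^1$-traces are dense in $L^2(\Gamma_j)$, one recovers that the piecewise-$H^2$ elements of $\mathcal D(A)$ are exactly those with $[u]=0$, $[Nu]=0$ and $u|_{\bo}=0$, so the self-adjoint realization is the physically natural one.

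The step I expect to be the main obstacle is the coercivity estimate: one must make sure Korn's inequality is genuinely available on $\Omega$ despite the interior interfaces (it is, since Korn does not see the coefficients and $\Omega$, $\Gamma_j$ are smooth) and that the elastic energy density is genuinely nonnegative, which is exactly where the sign hypothesis on the Lam\'e parameters and the positive lower bound for $\mu$ on $\bar\Omega$ are used. The remaining delicate point is bookkeeping: tracking the opposite outer normals of $\Omega_j$ and $\Omega_{j+1}$ along $\Gamma_j$ so that the two boundary contributions in \r{IBP_plan} assemble into the jump $[Nu]$ — routine, but this is where the dynamical transmission condition $[Nu]=0$ visibly enters.
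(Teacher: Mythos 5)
Your proposal is correct and follows essentially the same route as the paper: both realize $E$ through the nonnegative elastic energy form $\int_\Omega(\lambda|\delta u|^2+2\mu|\d^s u|^2)\,\d\!\Vol$, with the transmission conditions making the interface terms in the layer-by-layer integration by parts cancel, and then pass to the Friedrichs/form-representation extension. You supply more detail than the paper — notably closedness of the form on the explicit $H^1$ domain via Korn's second inequality and the identification of the operator domain — whereas the paper only verifies symmetry (Green's formula summed over the layers) and semiboundedness of $(-Eu,u)$ on the original domain and takes the Friedrichs extension, for which closability is automatic and Korn is not needed.
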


\begin{proof} We start with Green's formula. Let $D$ be a bounded domain with a smooth boundary so that $\lambda,\mu,\rho$  are smooth in $\bar D$. Then 
\be{9G}
\int_D \langle Eu,v \rangle\rho \,\d\!\Vol- \int_D \langle u,Ev \rangle\rho\,\d\!\Vol = \int_{\p D} \left(  \langle Nu , v\rangle- \langle u, Nv \rangle \right)\d A,
\ee
where $\d A$ is the area measure in $\p D$ induced by $g$. 
To prove it, write 
\[
\int_D \langle Eu,v \rangle\rho \,\d\!\Vol
 = -\int_D \big(  \lambda \langle \delta u,\delta v\rangle  + 2\mu \langle\d^s u, \d^s v\rangle \big)\d\!\Vol+   \int_{\p D}\sigma_{ij}(u)\nu^j v^i \, \d A,
\]
since   $Eu=\rho^{-1}\delta\sigma(u) $.  
The last integral equals
\[
   \int_{\p D} \langle Nu,  v\rangle \, \d A.
\]
Switch $u$ and $v$ and subtract the resulting formulas to prove \r{9G}. 

Assume now that $u$ and $v$ are smooth up to the interfaces, may jump there and satisfy the transmission conditions \r{tr}. 
We apply \r{9G} to $\Omega\setminus \Omega_1$, $\Omega_1\setminus \Omega_2$, \dots, $\Omega_k$ and sum up the results. Note that the outer normal to $\Omega\setminus \Omega_1$ at $\Gamma_1$ is the inner one at the same $\Gamma_1$ when viewed from  $\Omega_1\setminus \Omega_2$, etc. As a result, we get \r{9G} in $\Omega$ as well, despite the discontinuities because by the transmission conditions \r{tr}, all contributions from $\Gamma_1,\dots,\Gamma_k$ cancel. By the zero boundary condition on $\bo$, the r.h.s.\ of \r{9G} vanishes. Therefore, $E$ is symmetric. 

To show that there is a natural self-adjoint extension, it is enough to show that the quadratic form $(-Eu,u)$ is bounded from below. For every smooth $u$ satisfying the Dirichlet boundary condition, by \r{L0} we have
\[
(-Eu,u)= \int_\Omega \left(  \lambda |\delta u|^2 + 2\mu |\d^s u|^2 \right)\d\!\Vol,
\]
which is non-negative. 

We can write the Cauchy problem at $t=0$ for \r{el_eq} with Dirichlet boundary conditions now as 
\[
\partial_t (u_1,u_2) = \mathbf{E}(u_1,u_2) := (u_2,Eu_1), \quad (u_1,u_2)|_{t=0}=(f_1,f_2). 
\]
The operator $\mathbf{E}$ is self-adjoint on the energy $\mathcal{H}$ space with norm
\[
\|(f_1,f_2)\|^2_\mathcal{H} = \int_\Omega \left(  \lambda |\delta f_1|^2 + 2\mu |\d^s f_1|^2 +|f_2|^2\right)\d\!\Vol.
\]
Then by Stone's theorem, the Cauchy problem at $t=0$ for \r{el_eq} with Dirichlet boundary conditions is solved by a unitary group. Problem \r{1} can be solved for regular enough $f$ by extending $f$ inside $\Omega$ and reducing it to a problem with a source but with homogeneous Dirichlet boundary conditions; and solving it by Duhamel's formula. 
\end{proof}

\subsection{The Neumann boundary operator}
 Let $x=(x',x^3)$ be semigeodesic coordinates  to a given surface $\Gamma$, with $x^3>0$ on one side of it, defining the orientation in the metric setup. 
The  metric then takes the form $g$ in those coordinates with $g_{\alpha 3}=\delta_{\alpha 3}$ for $1\le\alpha\le 3$. Then, see also \cite{SUV_elastic},
\[
(Nu)_j = \lambda (\delta u) \delta_{j3} + \mu\left( \partial_3 u_j + \partial_j u_3- 2 \Gamma_{j3}^ku_k\right).
\]
Therefore,
\be{Nu}
\begin{split}
 (Nu)_j  &= 
  \mu  (\partial_3u_{j}+ \partial_j u_{3})-2\mu \Gamma_{j3}^\nu u_\nu \ ,\quad j=1,2,\\
 (Nu)_3  &=   \lambda (\partial_1 u_{1} +\partial_2  u_{2}) +   (\lambda +2\mu)\partial_3 u_{3} ,
\end{split}
\ee
where $\nu=1,2$ and we used the fact that $\Gamma_{33}^k= \Gamma_{3k}^3=0$.

\section{Geometric optics for the wave equation with \PDO\  lower order terms} \label{sec_GO} 
We recall the well known geometric optics construction for a hyperbolic pseudo-differential  equation generalizing  the acoustic  wave equation, see, e.g., \cite{Taylor-book0, Treves}. We allow the equation to be a system but we still assume that the principal part is scalar, see also \cite{Dencker_polar}.  In this generality, the construction is done in \cite[VIII.3]{Taylor-book0}. We are not going to formulate results about the propagation of the polarization set which can be derived from  \cite{Dencker_polar}. The reason to do study the acoustic equation in this generality is two-fold. First, the elastic system decomposes into such pseudo-differential equations; and second, SH waves propagate like acoustic ones as we show below. 

\subsection{The Cauchy Problem with data at $t=0$} \label{sec_Ac_Cauchy}

Our interest is in  the acoustic wave equation with lower order classical  pseudo-differential term $A\in\Psi^1$
\be{ac}
(\partial_t^2- c^2 \Delta_{{\gzero}}+A)u=0
\ee
 with Cauchy data $(u,\partial_t u)=(h_1,h_2)$ at $t=0$. Here, ${\gzero}$ is a Riemannian metric that we include in order to have the flexibility to change coordinates easily; and $\Delta_{{\gzero}}$ is the Laplace-Beltrami operator. The distribution $u$ is vector valued and $A$ is a matrix valued \PDO. 
Up to lower order terms, $c^2\Delta_g$ coincides with $\Delta_{c^{-2}g}$. 
The characteristic variety $\Sigma$ is given by $\tau^2=c^2|\xi|_{{\gzero}}^2$ and has two connected components $\Sigma_\pm$  corresponding to $\tau<0$ and $\tau>0$, away from the zero section (notice the convention that $\tau<0$ corresponds to $\Sigma_+$). 
We are looking for solutions of the form 
\be{o1}
\begin{split}
u(t,x) =  (2\pi)^{-n} \sum_{\sigma=\pm}\int e^{\i\phi_\sigma(t,x,\xi)} &\Big( a_{1,\sigma}(t,x,\xi) \hat h_1(\xi)\\
&+  a_{2,\sigma}(t,x,\xi) |\xi|_{{\gzero}}^{-1}\hat h_2(\xi)\Big) \d \xi,
\end{split}
\ee
modulo  terms involving smoothing operators of $h_1$ and $h_2$, defined in some neighborhood of $t=0$, $x=x_0$ with some $x_0$.  This parametrix differs from the actual solution by a smoothing operator applied to $\mathbf{h}=(h_1,h_2)$, as it follows from standard hyperbolic estimates. The signs $\sigma=\pm$ correspond to solutions with wave front sets in $\Sigma_\mp$, respectively as it can be seen by applying the stationary phase lemma. 

Here, $a_{j,\sigma}$ are classical amplitudes of order zero depending smoothly on $t$ of the form
\be{a}
a_{j,\sigma} \sim \sum_{k=0}^\infty a_{j,\sigma}^{(k)},\quad \sigma=\pm, \; j=1,2,
\ee
where $a_{j,\sigma}^{(k)}$ is homogeneous in $\xi$ of degree $-k$ for large $|\xi|$. 
The phase functions $\phi_\pm$ are positively homogeneous of order $1$ in $\xi$ solving the eikonal equations
\be{o2}
\partial_t\phi\pm c(x)|\nabla_x\phi|_{{\gzero}}=0, \quad 
\phi_\pm|_{t=0}=x\cdot\xi.
\ee
Such solutions exist locally only, in general. While the principal symbol is the only one determining the eikonal equations and therefore the geometry, the subprincipal symbol in \r{ac} depending on the principal one of $A$, affects the leading amplitude below. 

Since the principal symbol of the hyperbolic operator in \r{ac} allows the decomposition  $-\tau^2+c^2|\xi|_{{\gzero}} = (-\tau+c|\xi|_{{\gzero}})(\tau+c|\xi|_{{\gzero}}) $, in a conic neighborhood of $\Sigma_+$, one can apply a parametrix of $D_t-c|D|_{{\gzero}}$ to write \r{ac} there as
\be{4.5}
(\partial_t+\i c|D|_{{\gzero}}+A_+)u_+=0\quad \text{mod $C^\infty$}
\ee
with $A_+$ of order zero and $u_+$ being the sum of the $\sigma=+$ terms in \r{o1}. This is the case studied in \cite[VIII.3]{Taylor-book0} with a more general elliptic $-\lambda(t,x,D)$ replacing $\i c|D|_{{\gzero}}+A_+$, allowing $u_+$ to be a vector function, and $A_+$ to be matrix valued.

The main tool is the ``fundamental lemma'' allowing us to understand the action of a \PDO\ $P$ on $e^{\i\phi}a$ in terms of a homogeneous expansion in $\xi$, see \cite[VIII.7]{Taylor-book0} and \cite{Treves2}. The lemma remains true for principally scalar systems and it is used for such in \cite{Taylor-book0}.

We recall the  construction of the amplitude. Let $u_+$ be as the first term in \r{o1} with the indices there dropped, corresponding to $\sigma=+$. We seek the amplitude of the form $a=a_0+a_1+\dots$ as in \r{a} but the upper index $(k)$ is a lower one now. 

The order two terms in   the expansion of $(\partial_t -\i \lambda(t,x,D))u$ cancel because $\psi$ solves the eikonal equation \r{o2} with the plus sign. Equate  the order $1$ terms, we must solve  
\be{trans}
\left(   \partial_t - \frac{\partial \lambda_1}{\partial\xi_j} \frac{\partial}{\partial x^j} \right)a_0 -\bigg( \i\lambda_0+ \sum_{|\alpha|=2}  \frac{\partial^\alpha_\xi\lambda_1}{\alpha!}\partial_x^\alpha\phi \bigg) a_0  =0,
\ee
where $\lambda=\lambda_1+\lambda_0+\dots$ is the expansion of $\lambda$ and they are evaluated at $\xi=\nabla_x\phi$. In our case, $\lambda_1= -c(x)|\xi|_{{\gzero}}$, therefore, $\partial \lambda_1/\partial \xi = -cg^{-1}\xi/|\xi|_{{\gzero}}$, which for $\xi=\nabla_x\phi$ yields  $\partial \lambda_1/\partial \xi =  -c{\gzero}^{-1}\nabla_x\phi/|\nabla_x\phi|_{{\gzero}} =c^2 {\gzero}^{-1}\nabla_x\phi/\phi_t$. Therefore, the vector field in \r{trans} is proportional to  the vector field $(\phi_t, c^2 {\gzero}^{-1}\nabla_x\phi)$ which is the Hamiltonian covector field of the wave equation \r{ac} on $\Sigma_+$ identified with a vector one, since the Laplacian there is the one associated with the metric $\tilde g:= c^{-2}{\gzero}$. As it is well known, this is also the geodesic vector field of $\tilde g$ in the tangent bundle. 
The potential-like term in \r{trans} involves $\lambda_0=-A_+$, see \r{4.5}. Now, the transport equation \r{trans} is a first order linear ODE along the bicharacteristics for the vector valued $a_0$ with a matrix valued zero  order potential-like term. Given initial conditions at $t=0$, it is solvable as long as $\phi$ is well defined.

The higher order transport equations for $a_1$, $a_2$, etc., are derived in a similar way. They are non-homogeneous, with the same left-hand side but on the right we have functions computed in the previous steps. 

We return to \r{o1} now and look for $u$ as a sum of four terms as indicated here, each one of the type we described. We can use the Cauchy data to derive initial conditions for the transport equations, see e.g., \cite{St-Encyclopedia}, to complete the construction. 

The integrals appearing in \r{o1}  are Fourier Integral Operators (FIOs) either with $t$ considered as a parameter, or as $t$ considered as one of the variables. In the former case, singularities of $(h_1,h_2)$ propagate along the zero bicharacteristics. More precisely,  for every $t$, 
\be{C1}
\WF(\mathbf{u}(t,\cdot)) = C_+(t)\circ\WF(\mathbf{h}) \cup C_-(t)\circ\WF(\mathbf{h}),
\ee
where $\mathbf{u}:=(u,u_t)$, $\mathbf{h}=(h_1,h_2)$ and 
\[
\begin{split}
C_+(t)(x,\xi) &= \left( \gamma_{x,\xi/|\xi|_{\tilde g}}(t),|\xi|_g  g\dot \gamma_{x,\xi/|\xi|_{\tilde g}}(t)  \right), \\
C_-(t)(x,\xi)& = \left( \gamma_{x,-\xi/|\xi|_{\tilde g}}(t), -|\xi|_{\tilde g} {\tilde g}\dot \gamma_{x,-\xi/|\xi|_{\tilde g}}(t)  \right) = C_+(-t)(x,\xi),
\end{split}
\]
and for $(x,\eta)\in T^*\R^3\setminus 0$, $\gamma_{x,\eta}$ is the geodesic issued from $x$ in direction $\tilde g^{-1}\eta$. 
 
On the other hand, considering $t$ as one of the variables, 
\be{C2}
\WF(\mathbf{u}) = C_+\circ\WF(\mathbf{h}) \cup C_-\circ\WF(\mathbf{h}),
\ee
where 
\[
\begin{split}
C_+(x,\xi) &= \left\{\left( t ,\gamma_{x,\xi/|\xi|_{\tilde g}}(t), -|\xi|_{\tilde g}, |\xi|_{\tilde g}{\tilde g}\dot \gamma_{x,\xi/|\xi|_{\tilde g}}(t)  \right),\; t\in\R\right\}, \\
C_-(x,\xi)& = \left\{\left( t,\gamma_{x,-\xi/|\xi|_{\tilde g}}(t), |\xi|_{\tilde g}, -|\xi|_{\tilde g} {\tilde g}\dot \gamma_{x,-\xi/|\xi|_{\tilde g}}(t)\right)\; t\in\R\right\}.
\end{split}
\]
 In the analysis below, we will consider $C_+$ only. 
 
The construction  above can be done  in some neighborhood of a fixed point $(0,x_0)$ in general. To extend it globally, we can localize it first for $\mathbf{h}$ with $\WF(\mathbf{h})$ in a conic neighborhood of some fixed $(x_0,\xi^0)\in T^*\R^3\setminus 0$. 
Then $u$ will be well defined near the geodesic issued from that point but in some neighborhood of $(0,x_0)$ in general. We can fix some $t=t_1$ at which $u$ is still defined,  take the Cauchy data there and use it to construct a new solution. Then we get an FIO which is a composition of the two local FIOs each one associated with a canonical diffeomorphism, then so is the composition. Then we can use a partition of unity to conclude that while the representation \r{o1} is local, the conclusions \r{C1} and \r{C2} are global. In fact, it is well known that both $\mathbf{h}\mapsto \mathbf{u}$ and  $\mathbf{h}\mapsto \mathbf{u}(t,\cdot)$ with $t$ fixed are global FIOs associated with the canonical relations in \r{C1} and \r{C2}. 
 
 In particular, if $\Gamma$ is a smooth hypersurface, and $\gamma_{x,\xi}(t)$ hits $\Gamma$ for the first time $t=t(x,\xi)$ transversely locally, then $\mathbf{h}\mapsto u|_\Gamma$ is an FIO again with a canonical relation as $C_+$ above but with $t=t(x,\xi)$ and $\dot\gamma$ replaced by its tangential projection $\eta':= \dot\gamma'$. Notice that $\tau=-|\xi|_{\tilde g}<0$ for $C_+$ and $\tau=|\xi|_{\tilde g}>0$ for $C_-$. Also, $|\tau|<|\eta'|_{\tilde g}$ with equality for tangent rays that we exclude; therefore, $\WF(u|_{\R\times \Gamma})$ is in the hyperbolic region, as defined below.

\subsection{The boundary value problem for the acoustic equation} \label{sec_Ac_BVP} 
Let $\Gamma$ be a smooth hypersurface near a fixed point $x_0$ given locally by $x^n=0$. We take $x=(x',x^n)$ to be local semigeodesic coordinates. We define $\Omega_\pm=\{\pm x^n>0\}$ to be the ``positive'' and the ``negative'' sides of $\Gamma$. At the beginning, we work in $\Omega_+$ only and omit the superscript or the subscript $+$ from the corresponding quantities. For all possible solutions $u$ (not restricted to incoming or outgoing ones) with singularities not tangent to $\Gamma$, we want to understand how the Dirichlet data $f:= u|_{\R\times \Gamma}$ and the Neumann data $h:= \partial_\nu u|_{\R\times \Gamma}$ are related. Once we have this, we can understand microlocally  the boundary value problems with either Dirichlet or Neumamn boundary conditions, or with Cauchy data. 

The analysis depends on where the wave front set of the Cauchy data is. 
Let $(f,h) \in \mathcal{E}'(\R\times \R^{n-1})$ be supported near some $(t_0,x')$. 
Then $T^*(\R\times \R^{n-1})\setminus 0$ has a natural decomposition into the \textit{hyperbolic region} $c^2|\xi'|_{{\gzero}}< \tau^2 $, the \textit{glancing one}  $\tau^2=c^2 |\xi'|_{{\gzero}}$, and  the \textit{elliptic one}  $c^2 |\xi'|_{{\gzero}}> \tau^2$. Each one has two disconnected components corresponding to $ \mp\tau>0$. We will recall the analysis in  the $\tau<0$ component in more detail and will point out the needed changes when $\tau>0$. Also, we will not analyze (a neighborhood of) the glancing region; for that, see, e.g., \cite{Taylor-book0} for a strictly convex boundary.  We are looking for a parametrix of the \textit{outgoing}  solution $u$ of \r{ac} with boundary data $f$, i.e., the solution with singularities propagating in the future only. Solutions with singularities propagating to the past only will be called \textit{incoming}.

\subsubsection{The outgoing and the incoming Neumann operators} 
If $u_\textrm{out}$ is the outgoing solution with boundary data $f$ with $\WF(f)$ in the hyperbolic region, we call the operator $\Lambda_\textrm{out}f=\partial_\nu u|_{\R\times \Gamma}$ the outgoing Neumann operator. Similarly we define the incoming Neumann operator by $\Lambda_\textrm{in}$. In those definitions, it is implicit that the solutions are defined in $\bar\Omega$ and $\nu$ is the unit normal exterior to it. i.e., $\partial_\nu=-\partial_{x^n}$. If we have $\Omega_\pm$ as above, we use the notation $\Lambda^\pm_\textrm{in}$, $\Lambda^\pm_\textrm{out}$ to denote the four Neumann operators with the convention that we preserve $\nu$ for $\Omega_0$, i.e., $\nu$ is interior for it. If the coefficients of the wave equation are smooth across $\Gamma$, we have $\Lambda_\textrm{out} ^+=  \Lambda_\textrm{in} ^-$,  $\Lambda_\textrm{in} ^+=  \Lambda_\textrm{out} ^-$ up to smoothing operators. In the transmission problem below however, this is not the case.

\subsubsection{Wave front set in  the hyperbolic region $c^2|\xi'|_{{\gzero}}<\tau^2$}  Assume that $\WF(f)$ is in the hyperbolic region with $\tau<0$.   We are looking for a representation of $u$ of the form 
\be{10c}
u = (2\pi)^{-n}\iint_{\R\times\R^{n-1}} e^{\i\phi(t,x,\tau,\xi')}  a(t,x,\tau,\xi') \hat f(\tau,\xi') \, \d\tau\, \d\xi', 
\ee
with a phase function  $\phi$ and an amplitude $a$. 
 
The phase function solves the eikonal equation in \r{o2} with the plus sign  but with a  boundary condition on the timelike boundary $x^n=0$ now
\be{eik0}
\partial_t \phi  +c(x)|\nabla_x\phi|_g=0, \quad \phi|_{x^n=0} = t\tau +x'\cdot\xi'.
\ee
The choice of the positive square root reflects the assumption $\tau<0$. In the hyperbolic region, there are two solutions depending on the choice of the sign of $\partial_{x^n}\phi$ at $x^n=0$. It is easy to see that what corresponds to outgoing solutions is the positive choice 
\be{4.11}
\partial_{x^n}\phi\big|_{x^n=0}= \sqrt{ c^{-2}\tau^2- |\xi'|_g}.
\ee
We solve \r{eik0} with this condition locally. To construct the amplitude, we solve the same transport equations \r{trans} as above but with initial condition $a=1$ for $x^n=0$, i.e., the principal part $a_0$ of $a$ is one there; and all others vanish.

The case $\tau>0$ is similar: we seek the solution in a similar way but the sign in \r{eik0} is negative. This does not change the construction.

Incoming solutions are constructed similarly. We choose the negative square root in \r{4.11}. in particular we get that the outgoing and the incoming Neumann operators are \PDO s of order one with principal symbols equal to $\mp \i$ multiplied by \r{4.11}, see also Proposition~\ref{pr_N} below.

\subsubsection{Wave front set in  the elliptic region $c^2|\xi'|> \tau^2$. Evanescent waves}\label{sec_Ac_evan} 
We proceed formally in the same way but the problem here is that the eikonal equation has no real valued solution because the expression under the square root in \r{4.11} is negative. It may not even have  a complex valued solution. 
This is a well known case of an evanescent mode described by a complex valued phase function (and amplitude).  We follow \cite{Gerard}, see also \cite[VIII.4]{Taylor-book0}.  Since the construction in \cite{Gerard} is done for the Helmholtz equation with a large parameter and in  \cite[VIII.4]{Taylor-book0} it is done for an elliptic boundary value problem, respectively, we need to do them in our hyperbolic case as well,  even though the construction is essentially the same. 
We assume that $(t,x,\tau,\xi')$ belong to a conically compact neighborhood, contained in the elliptic region, of a fixed  point there. 
Plugging the ansatz in the elasticity equation, we use the ``fundamental lemma'' for complex phase functions in \cite[X.4]{Treves2} to get an asymptotic expansion which formally look the same as in the hyperbolic case. 
We are looking for a solution of the eikonal equation \r{11} for $\phi$ up to an error $O(|x^n|^\infty)$ at $x^n=0$ as a formal infinite expansion of the form 
\[
\phi = t\tau+ x'\cdot\xi'+ x^n\psi_1(t,x',\tau,\xi')+(x^n)^2 \psi_2(t,x',\tau, \xi')+\dots,
\]
where $\psi_j$ are symbols of order $1$. We denote this class by $\tilde S^1$, and by replacing the order $1$ by some $m$,  we denote by $\tilde S^m$ the corresponding class. 
To avoid exponentially large modes, we require $\Im \phi \ge0$. To construct the formal series, we first write  the eikonal equation \r{4.11} in the form 
\be{6.9a}
\partial_{x^n}\phi = \i\sqrt{|\nabla_x'\phi|_g^2 - (\partial_t \phi)^2}
\ee
(note that there are no incoming/outgoing choices here)  and then differentiate it w.r.t.\ $x^n$ at $x^n=0$. If such a solution exists, the error term would not affect those derivatives. We have
\be{6.9aa}
\psi_1 = \i\sqrt{ |\xi'|_g^2-c^{-2} \tau^2  }.
\ee
To find the higher order derivatives, we write \r{6.9a} in the form
\[
\partial_{x^n}\phi = F(x,\partial_{t,x}\phi);
\]
with $F(x,\eta)$ homogeneous in $\eta$ of order one. Then
\[
\partial_{x^n}^{k+1}\phi = \sum_{|\beta|+k_0+k_1+\dots+k_{|\beta|}=k} \partial_{x^n}^{k_0} \partial_\eta^\beta  F(x,\partial_{t,x} \phi) \partial_{x^n}^{1+k_1}\phi_{t,x} \dots \partial_{x^n}^{1+k_{|\beta|}}\phi_{t,x} .
\]
Since    $\partial_{x^n} \phi$   is a symbol of order one, we prove the claim. Note also that $\Im\phi\ge x^n(|\tau|+|\xi|)/C$.

The next step is to solve the transport equations. Since they have complex coefficients, they may not be solvable exactly and we solve them up to an $O(|x^n|^\infty)$ error as well. The rest is as in  \cite{Gerard} and \cite[VIII.4]{Taylor-book0}.

\begin{proposition}\label{pr_N}
In the hyperbolic region, $\Lambda_\text{\rm out}$ and $\Lambda_\text{\rm in}$ are \PDO s of order one with principal symbols
\be{pr_N1}
\sigma_p(\Lambda_\text{\rm out}) = -\i \sqrt{ c^{-2}\tau^2- |\xi'|_g}, \quad \sigma_p(\Lambda_\text{\rm in}) = \i\sqrt{ c^{-2}\tau^2- |\xi'|_g}.
\ee
In the elliptic one, they are \PDO s of order one again with principal symbols
\be{pr_N2}
\sigma_p(\Lambda_\text{\rm out}) =  \sigma_p(\Lambda_\text{\rm in}) = \sqrt{|\xi'|_g-  c^{-2}\tau^2}. 
\ee
\end{proposition}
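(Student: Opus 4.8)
## Proof proposal

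The plan is to read off both statements directly from the parametrix constructions carried out in the preceding two subsections, since the proposition is essentially a bookkeeping summary of those constructions. In both the hyperbolic and the elliptic case we have represented the outgoing (resp. incoming) solution $u$ of \r{ac} with Dirichlet data $f$ as an oscillatory integral of the form \r{10c}, where the phase $\phi$ solves the eikonal equation \r{eik0} (interpreted as \r{6.9a} in the elliptic region) with the appropriate sign choice at $x^n=0$, and the amplitude $a = a_0 + a_1 + \dots$ solves the transport equations \r{trans} with $a_0|_{x^n=0}=1$, $a_k|_{x^n=0}=0$ for $k\ge 1$. The Neumann operator is $\Lambda f = \partial_\nu u|_{x^n=0} = -\partial_{x^n}u|_{x^n=0}$. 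Differentiating \r{10c} under the integral sign, the leading contribution comes from hitting the exponential: $\partial_{x^n}\big(e^{\i\phi}a\big)\big|_{x^n=0} = \big(\i (\partial_{x^n}\phi) a_0 + \partial_{x^n}a_0 + \dots\big)e^{\i\phi}\big|_{x^n=0}$, and since $\phi|_{x^n=0}=t\tau+x'\cdot\xi'$ the resulting operator is a $\Psi$DO in $(t,x')$ whose full symbol has an asymptotic expansion in homogeneous terms; its principal symbol is $-\i(\partial_{x^n}\phi)\big|_{x^n=0}$. That $\Lambda$ is classical of order one (not merely that it has the stated principal symbol) follows because $\partial_{x^n}\phi\in \tilde S^1$ and the amplitude is a classical symbol of order zero — this is exactly the content of the "fundamental lemma" of \cite[VIII.7]{Taylor-book0} (and \cite[X.4]{Treves2} in the complex-phase case), which says that applying $\partial_{x^n}$ to $e^{\i\phi}a$ and restricting to $x^n=0$ produces a classical symbol with the indicated leading term.

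Concretely, in the hyperbolic region with $\tau<0$ the outgoing solution is the one with $\partial_{x^n}\phi|_{x^n=0} = \sqrt{c^{-2}\tau^2-|\xi'|_g}$ by \r{4.11}, hence $\sigma_p(\Lambda_{\mathrm{out}}) = -\i\sqrt{c^{-2}\tau^2-|\xi'|_g}$; the incoming solution uses the negative square root, giving $\sigma_p(\Lambda_{\mathrm{in}}) = +\i\sqrt{c^{-2}\tau^2-|\xi'|_g}$. For $\tau>0$ the sign in the eikonal equation flips but, as noted in the text following \r{4.11}, this does not change the construction, and one checks that $c^{-2}\tau^2 = c^{-2}|\tau|^2$ so the formula \r{pr_N1} is unchanged. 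In the elliptic region there is no outgoing/incoming dichotomy: the only admissible phase is the one with $\Im\phi\ge 0$, i.e. $\partial_{x^n}\phi|_{x^n=0} = \psi_1 = \i\sqrt{|\xi'|_g^2 - c^{-2}\tau^2}$ from \r{6.9aa} (here $|\xi'|_g^2$ should read $|\xi'|_g$ to match \r{pr_N2}, consistent with the convention in the hyperbolic formulas). Therefore $\sigma_p(\Lambda_{\mathrm{out}}) = \sigma_p(\Lambda_{\mathrm{in}}) = -\i\cdot\i\sqrt{|\xi'|_g - c^{-2}\tau^2} = \sqrt{|\xi'|_g - c^{-2}\tau^2}$, which is \r{pr_N2}.

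I would close by noting the one point that needs a word of care: in the elliptic case the phase and amplitude are only constructed modulo $O(|x^n|^\infty)$, so a priori $u$ is only an approximate solution near $\Gamma$. But an error that vanishes to infinite order at $x^n=0$ contributes a smoothing operator to $f\mapsto \partial_{x^n}u|_{x^n=0}$, since differentiating once in $x^n$ and restricting still leaves an $O(|x^n|^\infty)$ remainder whose symbol is rapidly decaying; alternatively, the true evanescent solution differs from this parametrix by a term smooth up to the boundary, whose Neumann trace is smoothing. Either way the principal symbol is unaffected, and the same remark disposes of the discrepancy between the local parametrix and the genuine outgoing/incoming solution in the hyperbolic case (the difference is smoothing by the standard hyperbolic propagation of singularities already invoked in Subsection~\ref{sec_Ac_BVP}). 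The main obstacle is thus not any hard analysis but simply assembling these pieces and being scrupulous about the sign conventions — which square root, which choice of $\partial_{x^n}\phi$, and the orientation convention $\partial_\nu = -\partial_{x^n}$ — so that all four symbols come out with the signs displayed in \r{pr_N1}–\r{pr_N2}.
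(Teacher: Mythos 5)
Your proposal is correct and follows essentially the same route as the paper, which proves the proposition simply by taking the normal derivative of the parametrices constructed in the preceding subsections (the leading term $-\i(\partial_{x^n}\phi)|_{x^n=0}$ with the sign choices \r{4.11} and \r{6.9aa}, together with $\partial_\nu=-\partial_{x^n}$). Your added remarks on the classicality via the fundamental lemma, the $O(|x^n|^\infty)$ errors, and the $|\xi'|_g$ versus $|\xi'|_g^2$ notational discrepancy are all consistent with the paper's (largely implicit) argument.
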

We recall that $\partial_\nu= - \partial_{x^n}$ in the coordinates we used to compute the principal symbols. The expressions we got are invariant however.  In both  cases, the DN maps are elliptic. As shown in \cite{Taylor-book0}, they are elliptic even in the glancing region but they belong to a different class of \PDO s. The principal symbols of the Neumann operators on the negative side $\Omega_-$ are similar but with opposite signs. 

\subsubsection{The boundary value problem with Dirichlet data} \label{sec_Ac_BVP_D} The problem of constructing the outgoing solution $u_\text{out}$ with Dirichlet data on $\R\times \Gamma$ was solved above when $\WF(f)$ is either in the hyperbolic of the elliptic region. Similarly, we construct $u_\text{in}$. Notice that in the elliptic region, the construction is the same for both. In particular, we proved Proposition~\ref{pr_N} by taking the normal derivatives of those solutions. 

Next, we can construct a reflected wave. Assume we have an incoming solution $u_\text{in}$ with singularities hitting $\Gamma$ transversely.  We want to construct a solution $u$ equal to $u_\text{in}$ for $t\ll0$ satisfying $u=0$ on the boundary.   Then $f:=u|_{\R\times \Gamma}$ has a wave front set in the hyperbolic region only. We construct the reflected wave $u_R$ as the outgoing solution with Dirichlet data $-f$. Then $u=u_\text{in}+u_R$ is the solution we seek.

\subsubsection{The boundary value problem with Neumann data} \label{sec_Ac_BVP_N} 
Consider the outgoing solution $u_\text{out}$ with boundary data $\partial_\nu u=h$ on $\R\times \Gamma$. We reduce it to the Dirichlet problem above by inverting the DN map in $\Lambda_\text{out}f=h$. Since the latter is elliptic in the two regions we work in, this can be done microlocally. Then we solve a Dirichlet problem. We do the same for the incoming solution. 

If we want to construct a reflected wave so that the solution $u$ satisfies $\partial_\nu u=0$, we need to solve $N_\text{out}f = -\partial_\nu u_\text{in}|_{\R\times \Gamma}$ which is possible since $N_\text{out}$ is elliptic. Having $f$, then we construct the outgoing solution with that Dirichlet data.

\subsubsection{The boundary value problem with Cauchy data} \label{sec_Ac_BVP_C}\ 
We are looking for a microlocal solution $u$ of the acoustic equation \r{ac} satisfying $u=f$ and $\partial_\nu u=h$ on $\R\times \Gamma$ with given $f$ and $h$ having wave front sets in the hyperbolic region first. The global Cauchy problem is over-determined because the singularities can hit the boundary again and therefore the Cauchy data have a structure (consisting of pairs in the graph of the lens relation); therefore prescribing them arbitrarily is not possible. On the other hand, one can construct a microlocal solution locally, when the wave front sets of $f$ and $h$ are localized in small conic sets excluding tangential directions, until  the singularities hit the boundary again. We are looking for $u$ as a sum of two solutions $u=u_\text{in}+u_\text{out}$, one incoming and the other one outgoing.  To determine the boundary values of the two solutions and to reduce the problem to section~\ref{sec_Ac_BVP_D}, we need to solve
\be{7.1}
u_\text{in} +  u_\text{out}=f, \quad 
\Lambda_\text{in} u_\text{in} + \Lambda_\text{out} u_\text{out} =h,
\ee
where $u_\text{in}$ and $u_\text{out}$ are the boundary values of those solutions. 

Let $\WF(f,h)$ be in the hyperbolic region first. 
Then on principal symbol level, the leading amplitudes  solve
\[\
a_\text{in} + a_\text{out}=\hat f, \quad -\i\xi_3(a_\text{in}  - a_\text{out} )=\hat h\quad \text{on $x^3=0$},
\]
where $\xi_3$ is defined by \r{4.11}. This in an elliptic system. This shows that the matrix valued operator in \r{7.1} is elliptic (if we reduce the order of the second equation to $0$ by applying an elliptic \PDO\ of order $-1$). 
Therefore, the Cauchy data determine uniquely a decomposition into an incoming and an outgoing solution, locally. This reduces the problem to the one we solved in section~\ref{sec_Ac_BVP}. 

 If $\WF(f,h)$ is in the elliptic region, there is only one parametrix, no incoming or outgoing ones. The corresponding DN map $\Lambda$ is an elliptic \PDO \ of order one with principal symbol \r{pr_N2}. Then for $(f,h)$ to be Cauchy data of an actual solution (up to smooth functions) it is needed that it belongs to the range of $(\Id,\Lambda)$ (up to smooth functions). This makes this problem over-determined. If $h=\Lambda f$, a microlocal solution exists, as we showed above. It propagates no singularities away from $\Gamma$, and it does not propagate singularities along $\Gamma$ either (unlike the Rayleigh waves in elasticity which propagate along $\Gamma$).

\subsection{The transmission problem}\label{sec_Ac_RT}
  We recall the setup in section~\ref{sec_Ac_BVP}. 
We work locally in a small neighborhood of a point on $\Gamma$ and call one of its sides, $\Omega_-$ negative, the other one, $\Omega_+$, positive. For the speed $c$, we have $c=c_{-}$ in $\Omega_-$, and $c = c_{+}$ in $\Omega_-$, where $c_{-}, c_{+}$ are smooth up to $\Gamma$ and $c_-\not=c_+$ pointwise. We impose the transmission conditions
\be{trA}
[u]=[\partial_\nu u]=0\quad\text{on $\Gamma$},
\ee
where $\nu$ is the normal derivative. 
 Let $(x',x^3)$ be semi-geodesic coordinates near $\Gamma$ so that $\pm x^3>0$ in $\Omega_\pm$.

 Let $u_I$ be an incident solution of the acoustic equation \r{ac} with speed $c$ and background metric ${\gzero}$ with a wave front set localized near a small conic neighborhood of some covector (at some time) approaching $\Gamma$ from the positive  side. $\Omega_+$ As mentioned above, we consider singularities $(x,\xi)$ which move in the direction of $\xi$ only, i.e, associated with $\phi_+$ in \r{o1}, as we did in section~\ref{sec_GO}. Then on $\WF(u_I)$, with $t$ considered as a variable, we have $\tau<0$. 
 Extend the speed $c$ form the negative to the positive side in a smooth way (recall that $c$ jumps across $\Gamma$) and extend $u_I$ smoothly across $\Gamma$ as a solution with that speed.  Set 
\be{14h}
f:= u_I|_{\R\times \Gamma}.
\ee
Let $(x_0,\xi_0)$ with $x_0\in \Gamma$ be one of the singularities of $u_I$. We assume that $\xi_0$ is a unit covector w.r.t.\ $c_{+}^{-2}{\gzero}$. We have that $\WF(f)$ is in the hyperbolic region  $c_+|\xi'|<-\tau$ in $\Omega_+$. We are looking for a parametrix $u$ near $x_0$ of the form
\be{14u}
u = u_I + u_R+u_T,
\ee
where $u_I$ is incoming and restricted to $\bar\Omega_+$;  $u_R$ is the reflected outgoing solution supported in $\bar\Omega_+$, and $u_T$ is the transmitted outgoing one or an evanescent mode, supported in $\bar\Omega_-$. It is enough to find the boundary values of those functions. 

\subsubsection{The hyperbolic-hyperbolic case} 
Assume that $\WF(f)$ is in the hyperbolic region in $\Omega_-$ as well, i.e., $c_{-}^2|\xi'|^2<\tau^2$ on $\WF(f)$. If $c_{-}<c_{+}$ at $x_0$ (transmission from a fast to a slow region), that condition is satisfied regardless of $\xi_0'$.  If $c_{-}>c_{+}$(transmission from a slow to a fast region), existence of a transmitted ray depends on $\xi_0'$.  
Let $\theta_+$ be the angle which an incoming ray makes with the normal, then the reflected  angle will be the same and the angle  $\theta_-$ of the transmitted ray, see Figure~\ref{pic1ac}, is related to $\theta_+$ by  Snell's law
\be{Snella}
\frac{\sin\theta_+}{\sin\theta_-} = \frac{c_+}{c_-},
\ee
which  follows directly from \r{eik0} with $c=c_-$ and $c=c_+$ there, see also \cite{SU-thermo_brain}.  This relation shows that a transmitted ray will exist only if $\theta_+$ does not  exceed the critical angle 
\be{theta_cr}
\theta_\textrm{cr}=\arcsin(c_+/c_-).
\ee

\begin{figure}[!ht]
\includegraphics[page=7,scale=1]{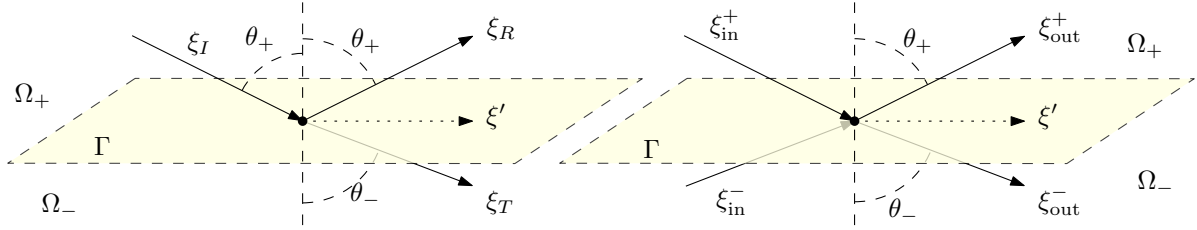}
\caption{Reflected and transmitted acoustic waves with an incoming ray from the top (left) and with incoming rays from both sides (right)
}\label{pic1ac}
\end{figure}

The transmission conditions \r{trA} are equivalent to 
\be{acTC1}
\begin{split}
u_I + u_R & =  u_T,\\
N_\textrm{in}^+u_I  + N_\textrm{out}^+u_R & =   N_\textrm{out}^-u_T.
\end{split}
\ee

Assume now that we want to satisfy transmission conditions requiring continuity of $u$ and its normal derivative across the boundary. Then we get the following linear  system for the leading terms $a_T^{(0)}$ and $a_R^{(0)}$ of the amplitudes $a_T$ and $a_R$:
\be{15}
\begin{array}{rll}\medskip 
a_T^{(0)}-a_R^{(0)} &=a_I^{(0)} & \text{for $x^n=0$},\\ \displaystyle 
-\xi_n^- a_T^{(0)} -  \xi_n^+  a_R^{(0)} &=\displaystyle -\xi_n^+ a_I^{(0)} &\text{for $x^n=0$}, 
\end{array}
\ee
where 
\be{15b}
\xi_n^\pm  = \sqrt{c_{\pm}^{-2} \tau^2-|\xi'|_g^2},  \quad \text{for $x^n=0$}.
\ee
In particular, this shows that the determinant of \r{15} is negative, and therefore, the system is solvable, i.e., elliptic after reducing the order of the second equation to zero.
Since the system \r{acTC1} is elliptic, it can be solved up to infinite order, i.e., we can find the all terms $a_{R,T}^{(k)}$ at $x^n=0$. The solutions serve as initial conditions for the transport equations of   the corresponding modes. 

Multiplying the first by the conjugate of the second equation, we get
\[
\xi_n^-|a^{(0)}_T|^2+ \xi_n^+|a^{(0)}_R|^2 = \xi_n^+|a^{(0)}_I|^2,
\]
which can be considered (and justified) as preservation of the energy  across $\Gamma$.

\subsubsection{Total internal reflection}\label{sec_ac_FIR}
 Assume now that $\WF(f)$ is in the elliptic region for $c_-$. This happens when $\theta_+> \theta_\textrm{cr} $. In that case, there will be no transmitted singularity. Indeed, we are looking for an evanescent mode in $\Omega_-$. Then $N_\text{out}^-$ in \r{acTC1} is in the elliptic region. The analog of \r{15} then is
\be{ac_fir1}
\begin{pmatrix} 1&-1\\ -\xi_n^-&-\xi_n^+ \end{pmatrix} \begin{pmatrix}  a_T^{(0)}\\  a^{(0)}_R \end{pmatrix}
= a_I^{(0)}\begin{pmatrix} 1\\ -\xi_n^+\end{pmatrix}  
\ee
where $\xi_n^-=\i |\xi_n^-|$ is pure imaginary and given by \r{pr_N2} times $\i$. Equivalently,
\be{ac_fir2}
a_I^{(0)}+ a_R^{(0)}= a_T^{(0)}, \quad  \xi_n^+\left( a_I^{(0)}- a_R^{(0)}\right)= \i |\xi_n^-|   a_T^{(0)}.
\ee
Take the real part of the first equation multiplied by the conjugate of the second one to get 
\be{ac_fir3}
\big|a_R^{(0)}\big|^2 = \big|a_I^{(0)}\big|^2.
\ee
In other words, on principal level, the whole energy is reflected and nothing is transmitted. We could have obtained this  directly by solving \r{ac_fir1}, of course.

\subsubsection{Incoming waves from both sides of $\Gamma$. } 
A more general setup is to assume incoming waves from each side, see Figure~\ref{pic1ac}, right. We do not need to assume hyperbolic ones; they could be evanescent. In fact, this is an analogue of the Cauchy data case in the boundary value problem, see section~\ref{sec_Ac_BVP_C}. The point of view we adopt and will keep in the elastic case, is to classify the cases by the wave front set of the Cauchy data on the boundary. 

We are interested in two questions: (i) well posedness of the transmission problem: given all incoming waves, is the problem well posed for the outgoing ones; and (ii) given all waves on one side of $\Gamma$, can we solve for all waves on the other one? We show that (i) is true as it can be expected (and well known). The answer to (ii) is not always affirmative; and when it is; this means that we can control the configuration on one side from the other one; in particular we can kill either the  incoming  or the outgoing wave on that side.

\textbf{The hyperbolic-hyperbolic case.} We assume now that the Cauchy data $(f,h)$ (the same on both sides by the transmission conditions) has a wave front set in the hyperbolic region on each side of $\Gamma$. Then on each side, we have two solutions: one incoming and one outgoing. 
Let $u^+_{\textrm{in}}$ and $u^-_{\textrm{in}}$ be the two incoming solutions from the positive and from the negative side, respectively, and let  ${u}^+_{\textrm{out}} $, ${u}^-_{\textrm{out}}$ be the two outgoing ones. A usual, we assume no tangent rays. Then the transmission conditions are given by 
\be{acTC}
\begin{split}
u_\textrm{in}^++ u_\textrm{out}^+ & = u_\textrm{in}^-+ u_\textrm{out}^-,\\
N_\textrm{in}^+u_\textrm{in}^++ N_\textrm{out}^+u_\textrm{out}^+ & = N_\textrm{in}^- u_\textrm{in}^-+ N_\textrm{out}^-u_\textrm{out}^-.
\end{split}
\ee
This is a generalization of \r{acTC1} with one more wave added. 
If the corresponding principal amplitudes are $ a^+_{\textrm{in}}$, $a^-_{\textrm{in}}$,  $a^+_{\textrm{out}}$, $a^-_{\textrm{out}}$, we get
\be{ac20}
\begin{pmatrix} 1&1\\ -\xi_n^+&\xi_n^+ \end{pmatrix} \begin{pmatrix}  a_\textrm{in}^+\\  a^+_\textrm{out} \end{pmatrix}
= \begin{pmatrix} 1&1\\ \xi_n^-&-\xi_n^- \end{pmatrix} \begin{pmatrix}   a^-_\textrm{in}\\   a^-_\textrm{out}  \end{pmatrix}
\ee
Clearly, each matrix is elliptic. This implies that we have control from each side:  given any choice of two amplitudes on one side, say  $\Omega_-$, one gets an elliptic problem for finding the amplitudes on the other one, in this case $\Omega_+$.  

We also get ellipticity for solving for the outgoing/incoming waves given the incoming/outgoing ones, i.e., the transmission problem is well posed. This also follows from energy conservation. Indeed, multiplying the first by the conjugate of the second equation, and then taking the real part above yields
\be{ac_en1}
\xi_n^+ \left( |a^+_{\textrm{out}}|^2 - |a^+_{\textrm{in}}|^2  \right) + \xi_n^- \left( |a^-_{\textrm{out}}|^2 - |a^-_{\textrm{in}}|^2  \right)=0. 
\ee
This  energy preservation  across the boundary implying in particular that if all incoming waves vanish, then so do the outgoing ones; i.e., that problem is elliptic. 

\textbf{The hyperbolic-elliptic case.} We assume now that the Cauchy data $(f,h)$ (the same on both sides by the transmission conditions) has a wave front set in the hyperbolic region w.r.t.\ $c_+$ and in the elliptic one for $c_-$. Then in $\Omega_+$ we have two solutions: one incoming and one outgoing but in $\Omega_-$ there is only one (evanescent) solution. This case is analyzed in section~\ref{sec_ac_FIR} with $u_\text{in}^+=u_I$, $u_\text{out}^+=u_R$, $u^-$ (no incoming or outgoing ones) corresponding to $u_T$ there. We found out there that the incoming wave (or the outgoing one) determines uniquely the outgoing (respectively, the incoming) one and the evanescent one $u_-$. On the other hand, we cannot control $u_\text{out}^+$ and  $u_\text{in}^+$ by choosing appropriately the evanescent mode $u^-=u_T$ appropriately; in fact $u_\text{in}^+$ alone determines the whole configuration already. 

A slightly different point of view into this case is that we cannot have arbitrary (up to smooth functions) Cauchy data on $\Gamma$ in the hyperbolic region for $\Omega_+$, since that data falls in the elliptic region on the negative side, and then it has to be in the graph of the Neumann operator $\Lambda_-$. On other hand, if that data satisfy that compatibility condition, the solution in $\Omega_+$ consists of an incoming and a reflected wave. This is in contrast to the hyperbolic-hyperbolic case, where we can cancel one of the waves on the top, for example.

\textbf{The elliptic-elliptic case.} We assume now that the Cauchy data $(f,h)$  has a wave front set in the elliptic  region w.r.t.\ both $c_+$ and $c_-$. It is interesting to see if we can have evanescent modes on both sides but still a non-trivial wave front set on $\Gamma$. We would need $(|\xi'|_g^2-c_+^{-2} \tau^2 )^{1/2}=- (|\xi'|_g^2-c_-^{-2} \tau^2 )^{1/2}$ which cannot happen. Therefore, there are no Rayleigh or Stoneley kind of waves in the acoustic case. 

\subsection{Justification of the parametrix} \label{sec_just_ac}
In each particular construction up to section~\ref{sec_Ac_BVP_C}, we constructed a parametrix satisfying the equation and the corresponding initial/boundary conditions up to a smooth error. Then the difference of the parametrix and the true solution satisfies all those conditions up to smooth errors. Standard  hyperbolic estimates imply that the difference is  smooth. In section~\ref{sec_Ac_BVP_C}, the Cauchy problem on a timelike boundary needs to be solved microlocally only and it is a tool to handle the transmission one. The justification of the parametrix for the latter can be done with the aid of \cite{Hansen84, Williams-transmission}, guaranteeing smooth solutions if the transmission conditions \r{tr} hold up to a smooth error only. 

\section{Geometric optics for the elastic wave equation} \label{sec_GOel} 

We study the Cauchy problem at $t=0$ and propagation of singularities in the elastic case. 
We present the geometric optics construction for the elastic wave equation in an open set first, where the coefficients are smooth. Such a construction is well known for systems with characteristics of constant multiplicities, see, e.g., \cite{Taylor-book0, Treves} and \cite{Dencker_polar}.  Our goal is to make the elastic case more explicit and to do a complete mode separation which we will use eventually near a boundary, see Proposition~\ref{pr1} below. 
The elastic case has been studied form microlocal point of view in \cite{Yamamoto_elastic_89, Rachele_2000, Rachele00, Rachele03, HansenUhlmann03, Brytik_11, SUV_elastic}.

Consider the elastic wave equation 
\be{el}
\begin{split}
u_{tt}-Eu&=0,\\
(u,u_t)|_{t=0}&=(h_1,h_2)
\end{split}
\ee
with Cauchy data $\mathbf{h}:=(h_1,h_2)$ at $t=0$. We want to solve it microlocally for $t$ in some interval and $x$ in an open set. The operator $E$ is associated with a Riemannian metric $g$ as in section~\ref{sec_m}. 
If $\lambda$, $\mu$ and $\rho$ are constant and $g$ Euclidean, one can use Fourier multipliers. In that case, 
let $\Pi_p=\Pi_p(D)$ be the projection to the p-modes, i.e., $\Pi_p$ is the Fourier multiplier $\hat u\mapsto (\xi/|\xi|)[(\xi/|\xi|)\cdot \hat u] $ and let $\Pi_s=\Id-\Pi_p$. It is easy to see that 
$\Pi_s$ is the Fourier multiplier $\hat u\mapsto -(\xi/|\xi|)\times (\xi/|\xi|)\ \times \hat u$. Also, we may regard $h = \Pi_ph+\Pi_s h$ as the potential/solenoidal (or the Hodge) decomposition of the 1-form $h$, see, e.g., \cite{Sh-book}. Then , $ {E} = c_p^2\Delta\Pi_p+ c_s^2\Delta \Pi_s$. We have a complete decoupling of the system into P and S waves.

In the variable coefficient case, we will do this up to smoothing operators. 
We recall  the construction in \cite{Taylor-book0}, which provides another proof of the propagation of singularities in this case. 
The principal symbol $\sigma_p(-E)$ of $-E$ has eigenvalues of constant multiplicities. Near every $(x_0,\xi_0)\in T^*\bar\Omega\setminus 0$, one can decouple the full symbol  $\sigma(-E)$ fully up to symbols of order $-\infty$. Namely, there exist  an elliptic matrix valued \PDO\  $U$  of order $0$  microlocally defined near $(x_0,\xi_0)$, so that 
\be{VEU}
U^{-1}EU= \begin{pmatrix}  c^2_s\Delta_g + A_s&0\\0&c_p^2\Delta_g +A_p\end{pmatrix}
\ee
modulo $S^{-\infty}$ near $(x_0,\xi_0)$,  where the matrix is in block form; with an $1\times 1$ block on the lower right and a $2\times2$ one on the upper left ($c_s^2\Delta_g+A_s$ is actually $c_s^2\Delta_g\,I_2+A_s$ with $I_2$ being the identity in two dimensions). Moreover, $A_s$ and $A_p$ are \PDO s of order one. 
In other words, the top non-zero block is scalar and the lower non-zero one is principally scalar. We recall this construction briefly. We seek $U$ as a classical \PDO\ with a principal symbol $U_0$ which diagonalizes $E$; there are many microlocal choices, and we fix one of them. 
Then 
\be{VEU2}
U_0^{-1}EU_0= \begin{pmatrix} -c_s^2\Delta_g I_2&0\\0&-c_p^2\Delta_g \end{pmatrix} + R_1,
\ee
where $R_1$ is of order one. Then we correct $U_0$ by replacing it with $U_0(I +K_1)$ with some \PDO\ $K$ of order $-1$, i.e., we apply $I+K_1$ to the right and $(I+K_1)^{-1}= I-K_1+\dots$ to the left to get 
\be{VEU3}  
(I-K_1)U_0^{-1}EU_0(I+K_1)= (I-K_1)\begin{pmatrix} -c_s^2\Delta_g I_2&0\\0&-c_p^2\Delta_g \end{pmatrix}(I+K) + R_1,\quad \text{mod\ $\Psi^0$},
\ee
where we used the fact that $(I-K_1)R_1(I+K_1)=R_1$ mod $\Psi^0$. Let us denote the matrix operator there by $G$. To kill the off diagonal terms on the right up to zeroth order, we need to do that for $GK-KG+R$. Note that $G$ and $K_1$ do not commute up to a lower order because they are matrix valued \PDO s. We look for $K$ in block form with zero diagonal entries and off-zero ones $K_{12}$ (an $1\times2$ vector) and $K_{21}$ (a $2\times1$ vector). If we represent $R_1$ in a block form as well, we reduce the problem to solving
\[
\begin{split}
K_{12}(-c_s^2\Delta_g)  - (-c_p^2\Delta_g)K_{12}&=-R_{12},\\
K_{21}(-c_p^2\Delta_g) - (-c_s^2\Delta_g)  K_{21}&=-R_{21}
\end{split}
\]
modulo $\Psi^0$. 
The solvability of this system on a principal symbol level follows by the general lemma in \cite[IX.1]{Taylor-book0} because $c_s\not=c_p$ but in this particular case, it is straightforward. Note that the principal symbols of $K_{12}$ and $K_{21}$ represent the coupling of the P and the S waves on a sub-principal symbol level, see also \cite{Brytik_11}. 

We apply $I-K_2$ to the left and $I+K_2$ to the right to kill the off diagonal terms of $(I+K_1)^{-1}G(I+K_1)$, etc. In fact, $U$ can be chosen to be unitary in microlocal sense \cite{MR1777025}. In our case however, we prefer $U$ to be of order one. 

From now on, we will do all principal symbol computation at a fixed point where $g$ is transformed to an Euclidean one (via the exponential map, for example) to simplify the notation. Then we will interpret the final result in invariant sense. 

The principal symbol, of $U$, at that fixed point,   will be chosen to be 
\be{U}
\sigma_p(U) = \begin{pmatrix}    0&-\xi_3&\xi_1\\  \xi_3&0&\xi_2\\ -\xi_2 & \xi_1&\xi_3\end{pmatrix}
\ee
when $\xi_3\not=0$. The third column is the eigenvector $\xi$ associated with $c_p^2$, while the first and the second ones are a basis of the eigenspace of $\sigma_p(-E)$ associated with $\sigma_s$; and that basis is (micro) local only. In fact, a global one does not exist since those vectors are characterized as being conormal to $\xi$. In this particular case, we chose $\xi\times e_1$ and $\xi\times e_2$ with $e_1=(1,0,0)$, etc. 

Recall that the principal symbol  computations so far are at a single point where $g$ is Euclidean. To extend it to all points,  an invariant way to choose $\sigma_p(U)$ is to replace the first and the second column  there by $\xi\times e_1$ and $\xi\times e_2$ with $e_{1,2}$ considered as covectors, and the cross product as in \r{cross}. In other words, the first  two columns in \r{U} are considered as vectors, then converted to covectors by the metric and multiplied by $(\det g)^{-1/2}$. Then we still get \r{5.12} but in $u^s$ we have curl in terms of the metric, see \r{curl}. 

It then follows that microlocally, the elasticity system can be written as $(\partial_t^2 - U^{-1}EU) w=0$ for 
\be{uv}
w= (w^s,w^p)=  U^{-1}u,
\ee
where $w^s=(w_1^s,w_2^s)$ and $w^p$ is scalar. This system decouples into the wave equations 
\be{SP-dec}
\begin{split} 
(\partial_t^2-c_s^2\Delta_g -A_p)w^s-R_sw&=0,\\
(\partial_t^2-c_p^2\Delta_g -A_s )w^p-R_pw&=0, 
\end{split}
\ee
 with $A_{p,s}$ of order one, $R_{p,s}$ smoothing; the first one is  a  $2\times 2$ system     and the second one is scalar. The first one has $\Sigma_s$ as a characteristic manifold, while the second one has $\Sigma_p$. 
Even though $U$ depends on the microlocal neighborhoods of the characteristic varieties $\Sigma_{s,p}$ we work in, the wave front sets of $U^{-1}f$, in those neighborhoods, we can apply the propagation of singularities results, or directly the microlocal geometric optics construction used below. Then we conclude that singularities in those neighborhoods propagate along the zero bicharacteristics of $\tau^2-c_s^2|\xi|^2$ and $\tau^2-c_p^2|\xi|^2$, respectively (which, of course, is well known). This implies a global result, as well. 

For $u= Uw$ we get
\be{5.11}
 u= u^s +u^p, \quad   u^s:=   U( w^s_1,w^s_2,0), \quad   u^p:=   U(0,0,w^p), 
\ee
where $u^s$ and $u^p$ have wave front sets in $\Sigma_s$ and $\Sigma_p$, respectively. We call such solutions microlocal S and P waves. We have 
\be{5.12}
  u^p =    (D   + V_p)w^p, \quad u^s =   (\det g)^{-1/2} g(-D_3 w_2^s, D_3 w_1^s,  -D_2 w_1^s+D_1 w_2^s) + V_sw^s,
\ee
where $V_p$ and $V_s$ are of order zero  and are formed by the lower order entries of $U$. 
Here $u^s$ can also be written as $u^s = D\times (w_1^s,w_2^s,0)+V_sw^s$.

Therefore, we proved the following. 

\begin{proposition}[mode separation]\label{pr1} 
Let $u$ be a solution of the elastic wave equation in the metric setting in some open set in $\R\times\R^3$. Let $u^p$ and $u^s$ be $u$ microlocalized near $\Sigma_p$ and $\Sigma_s$, respectively. 
Then, microlocally,  in any conic subset where $\xi_3\not=0$,  there exist  a scalar function  $w^p$ and a  vector valued function $w^s=(w_1^s,w_2^s)$ solving \r{SP-dec} so that $u=u_p+u_s$,  where  
\be{5.13}
u^p =    (D  + V_p)w^p, \quad u^s = D\times (w_1^s,w_2^s,0 )+V_sw^s
\ee
with $V_p$ and $V_s$ \PDO s of order zero  and the curl in  $D\times$ is in Riemannian sense. 
\end{proposition}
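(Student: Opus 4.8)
The plan is to assemble the construction developed in the paragraphs preceding the statement. There are three moves: the microlocal diagonalization of $E$, a bookkeeping of the principal symbol of the diagonalizer, and reading off the gradient/curl structure from that symbol.

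First I would invoke \r{VEU}. Since $\sigma_p(-E)$ has the eigenvalues $c_s^2|\xi|_g^2$ and $c_p^2|\xi|_g^2$ of constant multiplicities $2$ and $1$, and $c_s\neq c_p$, the standard diagonalization (see \cite[IX.1]{Taylor-book0}) produces an elliptic matrix-valued \PDO\ $U$, microlocally defined near a fixed $(x_0,\xi_0)$ with $(\xi_0)_3\neq0$, whose principal symbol $U_0$ diagonalizes $\sigma_p(E)$ and which, after successive corrections $U_0\rightsquigarrow U_0(I+K_1)(I+K_2)\cdots$ with $K_j$ of order $-j$, satisfies \r{VEU} modulo $S^{-\infty}$ with $A_s$, $A_p$ of order one. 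The solvability at each step of the off-diagonal equations for $K_{12}$, $K_{21}$ is precisely where $c_s\neq c_p$ enters, via the general lemma of the same reference.

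Second, working at a point where $g$ has been transformed to the Euclidean metric, I would fix $\sigma_p(U)$ to be \r{U}: the third column is the $c_p^2$-eigenvector $\xi$, while the first two columns are $\xi\times e_1$ and $\xi\times e_2$, a microlocal basis of $\xi^\perp$ which is independent exactly because $\xi_3\neq0$. For a general metric these columns are reinterpreted through the Riemannian cross product \r{cross}, i.e. multiplied by $(\det g)^{-1/2}g$. Setting $w=U^{-1}u=(w^s,w^p)$ with $w^s=(w_1^s,w_2^s)$ and $w^p$ scalar, applying $U^{-1}$ to $u_{tt}-Eu=0$ and using \r{VEU} gives the decoupled system \r{SP-dec} for $w$. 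Conversely $u=Uw=u^p+u^s$ with $u^p:=U(0,0,w^p)$ and $u^s:=U(w_1^s,w_2^s,0)$, and since the two diagonal blocks have characteristic sets $\Sigma_p$ and $\Sigma_s$, the pieces $u^p$, $u^s$ have wave front sets in $\Sigma_p$, $\Sigma_s$; they are the microlocal $P$ and $S$ parts of $u$.

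Finally I would extract \r{5.13}. The principal (degree-one) part of $U$ acting on $(0,0,w^p)$ has symbol $\xi$ times the scalar amplitude, so $u^p$ equals $Dw^p$ up to the order-zero operator $V_p$ assembled from the subleading entries of $U$; likewise the degree-one part of $U$ acting on $(w_1^s,w_2^s,0)$ has symbol $\xi\times(w_1^s,w_2^s,0)$, which by \r{cross}--\r{curl} is the Riemannian curl $D\times(w_1^s,w_2^s,0)$, the remainder $V_s$ again being order zero. I expect this last identification to be the only delicate point: one must check that passing from the Euclidean normalization to a general metric replaces the constant-coefficient cross product by exactly the Riemannian curl of \r{curl} --- this is the purpose of the formulas \r{cross} and \r{curl} prepared earlier --- and that the lower-order terms of $U$ do not spoil the stated orders of $V_p$, $V_s$. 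The locality of the representation in $(x_0,\xi_0)$ and in $\{\xi_3\neq0\}$ is built into the statement, and patching the wave front set conclusions with a microlocal partition of unity is routine, just as in the acoustic discussion.
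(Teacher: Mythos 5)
Your proposal is correct and follows the paper's own argument essentially verbatim: the block diagonalization \r{VEU} via successive corrections $U_0(I+K_1)(I+K_2)\cdots$ (with $c_s\neq c_p$ ensuring solvability of the off-diagonal equations), the choice \r{U} of $\sigma_p(U)$ with columns $\xi\times e_1$, $\xi\times e_2$, $\xi$ (independent precisely when $\xi_3\neq0$), the substitution $w=U^{-1}u$ yielding \r{SP-dec}, and the identification of the principal parts of $U(w^s_1,w^s_2,0)$ and $U(0,0,w^p)$ with the Riemannian curl and gradient via \r{cross}--\r{curl}. No discrepancies to report.
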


The assumption $\xi_3\not=0$ does not restrict us. We can always rename the variables or rotate the coordinate system.  On the other hand, the proposition does not provide a global mode separation. We are going to use it with $x^3$ being the distance to the boundary. Note also that $u$ and $w^p$, $w^s$ are related by \r{uv}.  

In the geophysics literature, $w^p$ and $w^s$ such that $u^s=\nabla\times w^s$ (in our case,  $w^s=(w_1^s, w_2^s,0)$) are called potentials. We have some freedom to choose $w^s$ so that \r{5.13} hold: adding an exact form to $(w_1^s,w_2^s,0 )$ would not change the principal part of  $u^s$ at least. One possible gauge to get unique $w^s$  is to take one of the components, in some coordinate system, to be zero. We have $w_3^s=0$ in \r{5.13}. The analysis  however must be restricted microlocally to $\xi_3\not=0$.
In what follows, $x^3$ will be the normal coordinate to the boundary. Another choice is to require $w^s$  to be solenoidal, i.e., divergence free.  

This proposition is a generalization of the well known representation of the solution of the isotropic constant coefficient elastic equation into potentials $u=\nabla w^p +\nabla\times w^s$ solving \r{SP-dec} with the operators $A_{p,s}$ and $R_{p,s}$ there vanishing. To guarantee uniqueness, it is often assumed that $w^p=(-\Delta)^{-1}\nabla\cdot u$, $w^s= -(-\Delta)^{-1}\nabla\times u$. We can prove a version of this in the variable coefficient case as well.

\section{The boundary value problem for the elastic system. Dirichlet boundary conditions} \label{sec_el_bvp}
Consider the elastic wave equation $u_{tt}-{E}u=0$ with boundary data $u=f$ on $\R\times\bo$. Assume that $f=0$ for $t\ll0$ and we are looking for the outgoing solution, i.e., the one which vanishes for $t\ll0$. We also introduce the notion of a microlocally outgoing solution along a single bicharacteristic requiring singularities of such a solution to propagate  to the future. We define similarly incoming solutions by reversing time.  Note that an outgoing solution does not need to consist of microlocally outgoing ones only since some incoming ones may be canceled at interfaces by outgoing ones.  
We will construct a parametrix of those solutions using the analysis in section~\r{sec_Ac_BVP}. Moreover, we study the Cauchy data problem as well. We will use the analysis in the acoustic case essentially.


We work in semigeodesic coordinates $x=(x',x^3)$, with $x^3>0$ in $\Omega$. We denote the dual variables by $(\xi',\xi_3)$. The Euclidean metric then takes the form $g$ in those coordinates with $g_{\alpha 3}=\delta_{\alpha 3}$ for $1\le\alpha\le 3$.  
The analysis however works if we start with an arbitrary metric $g$ in $\R^n$, not just with the Euclidean one. Norms and inner products below are always in the metric $g$ or $g^{-1}$ (for covectors). 

The phase space on the cylindrical  boundary $\R\times \bo $ can be naturally split into the following regions (recall that $c_s<c_p$):

\begin{description}
\item [Hyperbolic region] $c_p|\xi'|_g<|\tau|$. Then $c_s|\xi'|_g<|\tau|$ as well, so it is hyperbolic for both speeds. 
\item [P-glancing region] $c_p|\xi'|_g=|\tau|$. It is glancing for $c_p$ and hyperbolic for $c_s$.  
\item [Mixed region] $c_s|\xi'|_g< |\tau|<c_p|\xi'|_g$. It is elliptic for $c_p$ but hyperbolic for $c_s$. 
\item [S-glancing region] $c_s|\xi'|_g=|\tau|$. It is glancing for $c_s$ and elliptic  for $c_p$.  
\item [Elliptic region] $|\tau|< c_s|\xi'|_g$. Then $|\tau|< c_p|\xi'|_g$, as well, so it is elliptic for both speeds. 
\end{description}

We will not analyze wave fronts in the two glancing regions $|\tau|=c_p|\xi'|_g$ and $|\tau|=c_s|\xi'|_g$. For the purpose of the inverse problem, it is enough to analyze the propagation of singularities away from a set of measure zero. Therefore, there is no need to build a parametrix near the glancing regions  (as in  \cite{MR1334206} or \cite{Yamamoto_09}, for example) or work as in \cite{Hansen84};  so we can avoid the glancing regions. 

By the calculus of the wave front sets, the traces of microlocal P waves on $\R\times\bo$ have wave front sets in the hyperbolic region under the assumption that all singularities hit the boundary transversely. The traces of transversal microlocal S waves belong to $c_s|\xi'|_g<  |\tau|$, i.e, either to the hyperbolic, the mixed one, or to the p-glancing one. In particular, the trace of any solution of the elastic system with singularities hitting transversely, has wave front disjoint from the elliptic region. On the other hand, boundary values of solutions of the boundary value or the transmission problem may have wave front set on that surface, as Rayleigh and Stoneley  waves do.

The analysis we have done so far, see next section, allows us to decouple the P and the S modes on the boundary completely by their polarizations. Then in terms of the potentials $w^s$ and $w^p$, we can think of the system as a decoupled one. When modes hit a free boundary, or a transparent one, however, the reflected and the transmitted modes may change type. The reason for this is that the boundary trace of an incoming S or P wave does not belong to the same subspace as that of an outgoing one. 


 \subsection{Wave front set in the hyperbolic region} \label{sec_ED1}
Let $f(t,x')$ be supported near some $(t_0,x_0')\in \R\times \R^{2}$, where $\R^{2}$ represents $\bo$, flattened. Assume first that $\WF(f)$ is supported in the hyperbolic region. The later has two disconnected components determined by the sign of $\tau$ there. Let us assume that $\WF(f)$ is contained with the one with $\tau<0$; the $\tau>0$ case is similar. Then the characteristic varieties reduce to $\tau+c_p|\xi|_g=0$ and $\tau+c_s|\xi|_g=0$, respectively. 
We are looking for a parametrix of the outgoing solution of the form $u = Uw = u_p+u_s$ 
as in \r{5.11} with $w$ a potential. Note that this construction excludes   $\xi_3=0$, which in our case corresponds to tangent rays which we avoid. We will work in a conic open  microlocal region which does not contain such rays, i.e., $\xi_3\not=0$ there.

We seek the potentials $w^p$ and $w^s$ as geometric optics solutions as in section~\ref{sec_Ac_BVP}, i.e., of the form \r{10c} (where the solution is called $u$, not $w$)   with  phases $\phi_p$  and $\psi_s$, respectively, and a scalar amplitude $a^p$   and a 2D vector-valued one $a_s= (a_1^s, a_2^s)$. The phase functions solve the eikonal equations
\be{11}
\partial_t\phi_p + c_p|\nabla_x\phi_p|_g=0, \quad \phi_p|_{x^3=0}=t\tau+ x'\cdot\xi',
\ee
and similarly for $\phi_s$, where $x'=(x^1,x^2)$. The choice of the positive sign in front of the square root in the eikonal equation is determined by the choice $\tau<0$. 
By \r{5.13}, the principal part of the amplitude of $u_p$ is $(D_x\phi_p) a^p$ and that of $u_s$ is $D_x\phi_s \times (a^s_1,a^s_2,0)$. Restricted to the boundary, we have $\nabla_x\phi_p= (\xi',\xi_3^p)$, $\nabla_x\phi_s= (\xi',\xi_3^s)$, where 
\be{14}
\xi_3^p: = \sqrt{c_p^{-2}\tau^2-|\xi'|_g^2}, \quad  \xi_3^s: = \sqrt{c_s^{-2}\tau^2- |\xi'|_g^2}, \quad \text{for $x^3=0$}.
\ee
We will use the notation
\be{14n}
\xi^p := (\xi',\xi_3^p) , \quad \xi^s :=(\xi',\xi^s_3) .
\ee
Those are the codirections of the rays emitted from the boundary, see Figure~\ref{pic_HR}. The angles $\theta^p$ and $\theta^s$ with the normal satisfy Snell's law
\be{Snell}
\frac{\sin\theta^p}{\sin\theta^s} = \frac{c_p}{c_s}>1, 
\ee
as it follows directly from \r{14}, see also \cite{SU-thermo_brain}.

\begin{figure}[!ht]
\includegraphics[page=4,scale=1]{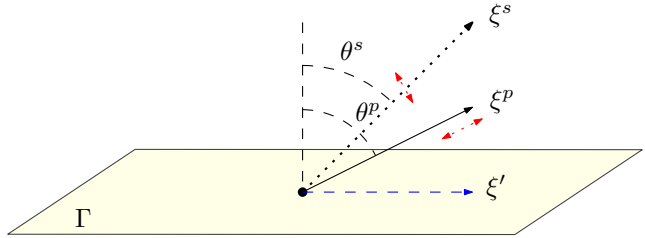}
\caption{The Dirichlet problem for the outgoing solution with wave front in the hyperbolic region.  There are emitted S and P waves.
}\label{pic_HR}
\end{figure}

As we stated above, we are going to do all principal  symbol calculations at $(t_0,x_0')$, where $g$ can always be arranged to be Euclidean.

In the hyperbolic region we work in, the expressions under the square roots are positive.  
The positive square roots guarantee that the singularities are outgoing.   We  determine next the boundary conditions for the transport equations. 
Since $u=Uw$, the boundary values of $w$ can be obtained from those of $u$ given by $f$ by an application of a certain \PDO. By the ``fundamental lemma'', see \cite[VIII.7]{Taylor-book0} and \cite{Treves2}, $Uw$ near the boundary is given by an oscillatory integral of the type \r{10c} with the amplitude there multiplied by a classical symbol with principal part $U(x,\nabla_x\phi)$, where $\phi$ equals either $\phi_s$ or $\psi_p$ depending on which components of $w$ we take. Restricted to the boundary, we get 
\be{fuU}
f=  u|_{x^3=0} =U_\textrm{out}\left(w |_{x^3=0}\right)
\ee
with $U_\textrm{out}$ a  classical    \PDO\  on $\R_t\times \R^2_{x'}$ with  principal symbol 
\be{Ub}
\sigma_p(U_\textrm{out}) = \begin{pmatrix}    0&-\xi_3^s&\xi_1\\  \xi_3^s&0&\xi_2\\ -\xi_2 & \xi_1&\xi_3^p\end{pmatrix}.
\ee
The subscript ``out'' is a reminder that we used the outgoing solution to define $U_\textrm{out}$. Similarly, we define $U_\textrm{in}$ using the incoming $u$. Its principal symbol is as above but with $\xi_3^s$ and $\xi_3^p$ having opposite signs.  Note that $U$ acts locally in $\R_t\times\R_x^3$ while the two new operators act on $\R_t\times \R_x^2$. 
The symbol $\sigma_p(U_\textrm{out})$ is elliptic, in fact   
\be{detU}
\det\sigma_p(U_\textrm{out})=  \xi_3^s(|\xi'|^2+\xi_3^s\xi_3^p),
\ee
which also equals $\xi^s_3 \langle\xi^s, \xi^p\rangle$. 
The inverse of $\det\sigma_p(U_\textrm{out})$ is easy to compute and we do that below. 
To find the boundary conditions for $w=(w^s_1,w^s_2,w^p)$, we write $w|_{x^3=0}=U_\textrm{out}^{-1}f$ (recall that all our inverses are parametrices). Then for $w^p$ and $w^s$ we get \r{5.12} with $\xi_3$ in all symbols replaced by $\xi_3^p$ for $u^p$ and $\xi_3^s$ for $u^s$.  Once we have the boundary conditions for $w$, we construct $w$ near the boundary by the geometric optics construction \r{10c}.  To get $u=u^p+u^s$, we apply $U$ to the result, see \r{5.11}. 

\begin{remark}\label{rem_Rachele}
In \cite{Rachele03}, Rachele showed that when $g$ is Euclidean,  the leading amplitudes (polarizations) of $u^p$ and $u^s$ are independent of $\rho$ if we think of the three parameters being $(\rho, c_s,c_p)$ instead of $(\rho,\mu,\lambda)$. We will use this in Section~\ref{section_GC}. 
\end{remark}

In what follows, we will make the calculations above more geometric.  
By \r{5.13}, $u^s$ and $u^p$ have representations of the kind \r{10c} with the corresponding phase functions and matrix valued amplitudes having principal parts $f\to \xi\times  (A^sf,0)$ and $f\to \xi  A^p\cdot f$, where $A_p$ is a  vector, and $A_s$ is a $2\times3$ matrix. 
Then one can show that on the boundary, $h\mapsto \xi^s\times (A_sh,0)$  is  the non-orthogonal projection to the plane $(\xi^s)^\perp$ parallel to  $\xi^p$, and  $h\mapsto \xi^p A_p\cdot h$ is the non-orthogonal projection to  $\xi^p$ parallel to the latter plane.  In other words, they are the projection operators related to the direct sum $\xi^p \oplus (\xi^s)^\perp$.

Finally in this section, we notice that the same analysis holds for the incoming solutions with given Dirichlet boundary data. Then in the formulas above, we have to take the negative square roots of $\xi_3^p$ and $\xi_3^p$ in \r{14}.

\subsection{Wave front set in the mixed region} \label{sec_ED1m}
Let $\WF(f)$ be in the mixed region next. We show below that the outgoing solution has a microlocal S wave only. The eikonal equation for $\phi_s$ still has the same real valued solution locally, corresponding to the outgoing choice of the solution $u_s$. On the other hand, the eikonal equation \r{11} for $\phi_p$ has no real solution. 
Indeed, we have $\nabla_{t,x'}\phi_p=(\tau,\xi')$ on $x^3=0$ and there is no real-valued function $\phi_p$ that could solve \r{11} and have such a gradient because in \r{14},   $\xi_3^p$ would be pure imaginary. 
This is the case of an evanescent mode described in Section~\ref{sec_Ac_evan}. 
 
We are still looking for a solution of the form $u=u_s+u_p = U(w^s,w^p)$ but this time $w_p$, and therefore, $u_p$  is an evanescent mode as the one constructed in Section~\ref{sec_Ac_evan}. The eikonal equation for $\phi_p$ implies, see  \r{6.9aa}, that $\xi_3^p$ in this case reduces to 
\be{xi_3pi}
\xi_3^p = \i\sqrt{ |\xi'|_g^2-c_p^{-2} \tau^2  }.
\ee
Then as in \r{fuU}, \r{Ub}, applying the ``fundamental lemma'' for FIOs with a complex phase, see \cite[X.4]{Treves2}, we deduce as before that the boundary values for $w$ are given by \r{fuU} with a classical \PDO\ $U$ having principal symbol as in \r{Ub} (with the new pure imaginary $\xi^p_3$).  The operator $U$ is still elliptic because the determinant \r{detU} has non-zero imaginary part. Then we can determine the boundary conditions for $w^s$ and $w^p$, construct the microlocal solutions, and apply $U$ to get $u$.

\subsection{Wave front set in the elliptic region} \label{sec_6.3}
Assume that $\WF(f)$ is in the elliptic region. Then we proceed as before, looking for both  $w^s$ and $w^p$ as evanescent modes with complex phase functions. In this case, both $\xi_3^p$ and $\xi_3^s$ are pure imaginary with positive imaginary parts, see \r{xi_3pi}, and for $\xi_3^s$ we get
\be{xi_3si}
\xi_3^s = \i\sqrt{ |\xi'|_g^2-c_s^{-2} \tau^2  }.
\ee
 We have
\[
\det\sigma_p(U_\textrm{out})=|\xi'|_g^2 -\sqrt{ |\xi'|_g^2-c_s^{-2} \tau^2  } \sqrt{ |\xi'|_g^2-c_s^{-2} \tau^2  }>0. 
\]
Therefore, $U_\textrm{out}$ is elliptic and we can proceed as above and construct the solution as in Section~\ref{sec_Ac_evan}.

\subsection{Summary} We established that the Dirichlet problem is well posed microlocally and we have the following:
\begin{itemize}
\item[(i)] $\WF(f)$ in the hyperbolic region: there are outgoing P and S waves. 
\item[(ii)] $\WF(f)$ in the mixed region: there is an  outgoing   S wave only (plus an evanescent P  mode).  
\item[(iii)] $\WF(f)$ in the elliptic region: there are no outgoing  waves; there are two evanescent modes.  
\end{itemize}

\section{The boundary value problem for the elastic system. Neumann boundary conditions and the Neumann operator} \label{sec_el_bvpN} 

Assume now that we want to find the outgoing solution of the elastic wave equation with boundary data $Nu=h$. The strategy below is find the Dirichlet boundary data $f$ from this equation and then to proceed as in section~\ref{sec_el_bvp}. In other words, we want to solve $\Lambda f=h$ for $f$ microlocally if possible by showing that $\Lambda$ is elliptic (or not). Lack of ellipticity of $\Lambda$ in the elliptic region leads to Rayleigh waves, see, e.g., \cite{popov_elast, Taylor-Rayleigh, MR1376435,MR1334206}. 

\subsection{Wave front set in the hyperbolic region} 

We are looking again for an outgoing  solution of the type $u=u^s+u^p$ as in \r{5.11}. The boundary values $w_b = w |_{x^3=0}$ of $w$ are computed by solving
\be{fuUN}
h= Nu|_{x^3=0},  =M_\textrm{out}w_b,\quad M_\textrm{out} := \Lambda U_\textrm{out}
\ee
for $w_b$, compare with \r{fuU}, where $\Lambda$ is the microlocalized Dirichlet-to-Neumann map \r{2a}, i.e., $\Lambda h: = Nu|_{x^3=0}$ 
for $u$ an outgoing microlocal solution of the elasticity equation with boundary data $u=h$ on $x^3=0$. We can use \r{fuUN} and \r{Ub} to compute $\sigma_p(\Lambda)$. 

We define the incoming $M_\textrm{in}$ in a similar way as in \r{fuUN} but with $u$ being the incoming solution. More precisely, $M_\textrm{in}w_b$ is defined as $Nu|_{x^3=0}$ where $u$ is the incoming solution with boundary data $U_\textrm{in}w_b$. This also means that $M_\textrm{in}= \Lambda_\text{in} U_\textrm{in}$, where $\Lambda_\text{in}$ is defined as $Nu|_{x^3=0}$ with $u$ being the incoming solution. The operator $\Lambda$ the should be denoted by $\Lambda_\text{out}$ but we will keep the simpler one $\Lambda$. Below, we compute the principal symbols of $M_\text{out}$ and $M_\text{in}$.  Combining that with \r{Ub}, we can compute the principal symbol of $\Lambda$ as well but we will not need it.

By \r{Nu} and \r{Ub}, in semigeodesic coordinates, 
\be{Nb0}\begin{split}
\sigma_p(M_\textrm{out}) &=  \begin{pmatrix}   \mu \xi_3^s& 0&\mu \xi_1\\ 0&\mu \xi_3^s& \mu \xi_2\\ \lambda \xi_1 & \lambda \xi_2&(\lambda+2\mu)\xi_3^s\end{pmatrix}
\begin{pmatrix}    0&-\xi_3^s&0\\  \xi_3^s&0&0\\ -\xi_2 & \xi_1&0\end{pmatrix}\\
& + \begin{pmatrix}   \mu \xi_3^p& 0&\mu \xi_1\\ 0&\mu \xi_3^p& \mu \xi_2\\ \lambda \xi_1 & \lambda \xi_2&(\lambda+2\mu)\xi_3^p\end{pmatrix}
\begin{pmatrix}    0&0&\xi_1\\  0&0&\xi_2\\ 0 & 0 &\xi_3^p\end{pmatrix}.
\end{split}
\ee
Therefore, 
\be{Nb}\begin{split}  
 \sigma_p(M_\textrm{out})&= \begin{pmatrix} -\mu \xi_1\xi_2& \mu(2 \xi_1^2 +\xi_2^2 ) -\rho {\tau}^{2}&2 \mu \xi_1\xi_3^p
\\  -\mu (\xi_1^2+2 \xi_2^{2})+ \rho \tau^{2}&\mu\xi_1\xi_2&2 \mu \xi_2 \xi_3^p 
\\ -2\mu\xi_2\xi_3^s&2\mu \xi_1\xi_3^s&-2 \mu |\xi|^{2}+\rho\tau^{2}\end{pmatrix} .
\end{split}
\ee
Similarly, we define  $ M_\text{in}$ to be the principal symbol of the same operator but related to the incoming DN map; i.e., the same as above but with $\xi_3^s$ and $\xi^p_3$ the negative square roots in \r{14}. 

A direct computation yields 
\be{RNC_det}
\begin{split}
\det  \sigma_p(M_\text{out}) &= - \left(    \mu |\xi'|^2-\rho{\tau}^{2} \right)  \left( 4|\xi'|^2  
\mu^2\left( \xi_3^p \xi_3^s+|\xi'|^2\right) -4\mu \rho{\tau}^{2
}|\xi'|^2 +\rho^2{\tau}^{4}
 \right) \\
 &= - \rho\left(    c_p^2 |\xi'|^2-{\tau}
^{2} \right)   \left(( 2\mu |\xi'|^2 -\rho \tau^2)^2  +4 \mu^2|\xi'|^2  
\xi_3^p \xi_3^s
 \right)>0.
 \end{split}
\ee
The determinant of  $\sigma_p(M_\text{in})$ is the same. 
Since $U_\textrm{out}$ is elliptic, we get that $\Lambda$ is elliptic in the hyperbolic region as well. Therefore, we can invert $\Lambda$ microlocally and reduce the Neumann boundary value problem to the Dirichlet one, which can be solved as in section~\ref{sec_ED1}. More directly, we invert $\Lambda U_\textrm{out}$ and we get boundary conditions for $w$; which we use to solve the problem.

\subsection{Wave front set in the elliptic region.} \label{sec_7.2} 
In this case, we seek both $w^s$ and $w^p$ as evanescent modes. The calculations are as in section~\ref{sec_el_bvp} but $\xi_3^s$ and $\xi_3^p$ are pure imaginary as in \r{xi_3pi} and \r{xi_3si}. Then
\be{7.3}
\det \sigma_p(M_\textrm{out}) = - \rho\left(    c_p^2 |\xi'|^2-{\tau}
^{2} \right)   \left(( 2\mu |\xi'|^2 -\rho \tau^2)^2  -4|\xi'|^2  
 {\mu}^{2}|\xi_3^p| |\xi_3^s|
 \right). 
\ee
We have $ c_p^2 |\xi'|^2-{\tau}^{2}>0$. For the third factor above, introduce the function 
\[
R(s) = \left(s-2\right)^2 - 4\left(1-s\right)^\frac12 \left(1-c_s^2c_p^{-2}s\right)^\frac12.
\]
Then, up to an elliptic factor, $\det \sigma_p(M_\textrm{out})$ equals   $R(c_s^{-2}\tau^2|\xi'|^{-2})$. It is well known and can be proven easily that on the interval $s\in (0,1)$, this function has a unique simple root $s_0$. This corresponds to $s_0c_s^2 |\xi'|^2=\tau^2$. Therefore, if we set $c_R(x)=c_s\sqrt{s_0}$, known as the Rayleigh speed, we get a  characteristic variety 
\be{S_R}
\Sigma_R := \left\{c_R^2|\xi'|^2_g=\tau^2\right\}
\ee
 on which \r{7.3} has a simple zero. Note that $0<c_R<c_s<c_p$. 
Since $U_{\rm out}$ is elliptic here, see Section~\ref{sec_6.3}, we get that $\Lambda$ is elliptic in the elliptic region away from $\Sigma_R$ and its principal symbol has a simple zero there. This generates the Rayleigh waves, see Section~\ref{sec_Rayleigh}. For every $f$ with $\WF(f)$ in the elliptic region but away from $\Sigma_R$, we can proceed as above to solve the Neumann problem.

\subsection{Wave front set in the mixed region} In this case, we seek both $w^s$ as a hyperbolic wave and $w^p$ as an evanescent one. The calculations are as in section~\ref{sec_el_bvp} with $\xi_3^s$ real as in \r{14} and $\xi_3^p$  pure imaginary as in \r{xi_3pi}. Then $ c_p^2 |\xi'|^2-{\tau}^{2}>0$ as well and for $\det \sigma_p(M_\textrm{out}) $ we have an expression similar to \r{7.3} given, up to an elliptic factor, by $R(c_s^{-2}\tau^2|\xi'|^{-2})$ with 
\be{RNC_detm}
R(s) = \left(s-2\right)^2 +4\i  (s-1)^\frac12 \left(1-c_s^2c_p^{-2}s\right)^\frac12.
\ee
For $1<s<c_p^2c_s^{-2}$, which corresponds to the mixed region, $R$ is elliptic. This shows that, as above, one can construct $w|_{x^3=0}$ microlocally given $\Lambda f$. Then we construct $w^s$ and $w^p$, the latter as an evanescent mode; and then $u$. In particular, only microlocal  S waves propagate from $\bo$. 

\subsection{Incoming solutions} The construction of incoming solutions (singularities propagating to the past only) is similar and we will skip the details. One can obtain them from the outgoing solutions by reversing the time.

\section{The boundary value problem for the elastic system. Cauchy data} \label{sec_el_bvpC} 
We analyze the boundary value problem for the elastic system on one side of $\Gamma$ with Cauchy data $u=f$, $\partial_\nu u=h$ on $\R_t\times \Gamma$. Similarly to section~\ref{sec_Ac_BVP_C}, we assume wave front set away from the glancing regions. This analysis is needed for the transmission problem when we want to control the behavior of the waves on one side by the other. We show in particular that this problem is well posed microlocally even though globally it is not, in general. 

\subsection{Wave front in the hyperbolic region} \label{sec_u_C}
Assume first that the wave front set of $(f,h)$ is in the hyperbolic region. We are looking for a solution   
\be{u_C1}
u = u_\text{in} + u_\text{out}=     (u_\text{in}^p  + u_\text{in}^s)+ (u_\text{out}^p  + u_\text{out}^s),
\ee
having both an incoming and an outgoing part, see Figure~\ref{pic_C}. 
\begin{figure}[!ht]
\includegraphics[page=5,scale=1]{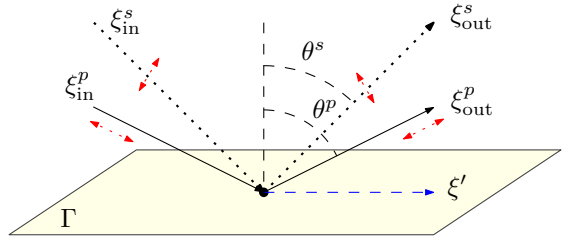}
\caption{The Cauchy problem with wave front in the hyperbolic region. The angle of incidence is the same as the angle of reflection for each type. Given any Cauchy data in the hyperbolic region, there is a unique solution (it is an elliptic problem). 
}\label{pic_C}
\end{figure}

Then on $\Gamma$, we need to solve
\be{u_C2}
  u_{\text{in},b} +  u_{\text{out},b} = f, \quad 
\Lambda_\text{in} u_{\text{in},b} + \Lambda_\text{out}u_{\text{out},b} = h,
\ee
for the boundary traces $u_{\text{in},b}$ and $u_{\text{out},b}$ of $u_{\text{in}}$ and $u_{\text{out}}$. We pass to the corresponding solutions $w$ as in \r{fuUN} to get
\be{u_C3}
U_\text{in}  w_{\text{in},b} +  U_\text{out}w_{\text{out},b} = f, \quad 
M_\text{in} w_{\text{in},b} + M_\text{out}w_{\text{out},b} = h.
\ee
Let $(a_{1,\text{in}}^s, a_{2,\text{in}}^s, a_\text{in}^p)^T  $ be the principal amplitude of $w_\text{in} $ and similarly for $w_\text{out} $. By the rotational invariance w.r.t.\ rotations in the $(\xi_1,\xi_2)$ plane (we justify this later), we can assume $\xi_2=0$. Then by \r{Nb}, 
\be{u_C3a}
\begin{split}
\sigma_p(M_\text{out})\big|_{\xi_2=0} &= \begin{pmatrix} 0 &2 \mu\xi_1^2-\rho {\tau}^{2}&2 \mu \xi_1\xi_3^p
\\  -\mu \xi_1^2+ \rho \tau^{2}&0&0
\\ 0&2\mu \xi_1\xi_3^s&-2 \mu \xi_1^{2}+\rho\tau^{2}\end{pmatrix}, \\ 
\sigma_p(U_\textrm{out})\big|_{\xi_2=0}  &= \begin{pmatrix}    0&-\xi_3^s&\xi_1\\  \xi_3^s&0&0\\ 0 & \xi_1&\xi_3^p\end{pmatrix},
\end{split}
\ee
and similarly for $\sigma_p(M_\text{in})$,  $\sigma_p(U_\text{in})$. Then  on principal symbol level, \r{u_C3} decouples into the following two systems
\be{u_C4}
A_\text{in} ( a_\text{in}^p, a_{2,\text{in}}^s)^T + A_\text{out} (a_\text{out}^p, a_{2,\text{out}}^s)^T = (\hat f_1,\hat f_3,\hat h_1, \hat h_3)^T,
\ee
and 
\be{u_C4a}
\begin{pmatrix}
\xi_{3}^s &- \xi_{3}^s \\
\mu(\xi_{3}^s)^2 & \mu(\xi_{3}^s)^2 \end{pmatrix} 
\begin{pmatrix}a_{1,\text{in}}^s\\ a_{1,\text{out}}^s \end{pmatrix}
= \begin{pmatrix}\hat f_2\\  \hat h_2\end{pmatrix} ,
\ee
where 
\be{u_C5}
A_\text{in}:= \begin{pmatrix}
\xi_1&-\xi_{3}^s \\
\xi_{3}^p&\xi_1 \\
2\mu\xi_{3}^p \xi_1 & \mu(2\xi_1^2-c_{s}^{-2}\tau^2)  \\
 - \mu( 2\xi_1^2-c_{s}^{-2}\tau^2 )  &   2\mu \xi_{3}^s\xi_1 
\end{pmatrix},
\quad
A_\text{out}:= \begin{pmatrix}
\xi_1&\xi_{3}^s \\
-\xi_{3}^p&\xi_1 \\
-2\mu\xi_{3}^p \xi_1 & \mu(2\xi_1^2-c_{s}^{-2}\tau^2)  \\
 - \mu( 2\xi_1^2-c_{s}^{-2}\tau^2 )  &  - 2\mu   \xi_{3}^s\xi_1 
\end{pmatrix}.
\ee
We have
\be{u_C6}
\begin{split}
\frac12(A_\text{in}+ A_\text{out}) &= \begin{pmatrix}
\xi_1&0 \\
0&\xi_1 \\
0 & \mu(2\xi_1^2-c_s^{-2}\tau^2)  \\
 - \mu(2\xi_1^2-c_s^{-2}\tau^2)   &  0
\end{pmatrix},\\
\frac12(A_\text{out}- A_\text{in})&= \begin{pmatrix}
0&\xi_{3}^s \\
-\xi_{3}^p& 0 \\
-2\mu\xi_{3}^p \xi_1    & 0 \\
 0 &  -2\mu\xi_{3}^s \xi_1 
\end{pmatrix}.
\end{split}
\ee
This shows that the system \r{u_C4}  decouples to two $2\times 2$ systems after rewriting it as a system for the sum and the difference of the original vectors.  The determinants of those two systems are $c_s^{-2}\tau^2\xi_3^p$ and $c_s^{-2}\tau^2\xi_3^s$, respectively; therefore, elliptic (after applying an elliptic operator of order $-1$ to the last two rows to equate their order with the rest, and we will use this notion of ellipticity below as well).  Therefore, \r{u_C4} is elliptic as well. Clearly, so is \r{u_C4a}, which behaves as the acoustic case \r{7.1}. Thus we proved  the following. 

\begin{lemma}\label{lemma_A}
The matrix valued symbol $(A_\text{\rm in}, A_\text{\rm out})$ is elliptic. 
\end{lemma}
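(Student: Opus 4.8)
The plan is to compute $\det(A_\text{in},A_\text{out})$ directly and check that it does not vanish in the hyperbolic region, exploiting the sum/difference rewriting already prepared in \r{u_C6}. As throughout this section, everything is computed at the base point $(t_0,x_0')$ with $g$ Euclidean and $\xi_2=0$ (allowed by the rotational invariance to be justified later, and already used to derive \r{u_C4}), so that $|\xi'|_g=|\xi_1|$ and, in the hyperbolic region, $\xi_3^p=\sqrt{c_p^{-2}\tau^2-\xi_1^2}>0$, $\xi_3^s=\sqrt{c_s^{-2}\tau^2-\xi_1^2}>0$ (hyperbolicity for $c_p$ forces it for $c_s$ since $c_s<c_p$), while $\tau\neq0$ and $\mu>0$.

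First I would make the linear change of unknowns $v_\text{in}=(a_\text{in}^p,a_{2,\text{in}}^s)^T$, $v_\text{out}=(a_\text{out}^p,a_{2,\text{out}}^s)^T\ \mapsto\ s=\frac12(v_\text{in}+v_\text{out})$, $d=\frac12(v_\text{out}-v_\text{in})$. This is an isomorphism with constant coefficients, so it neither creates nor destroys ellipticity, and it replaces the $4\times4$ symbol $(A_\text{in},A_\text{out})$ by $(A_\text{in}+A_\text{out}\mid A_\text{out}-A_\text{in})$, whose entries are displayed in \r{u_C6}. Then I would observe that, after permuting the columns (equivalently the unknowns $s,d$) and the rows (the entries $\hat f_1,\hat f_3,\hat h_1,\hat h_3$), this matrix is block diagonal: the rows $\hat f_1,\hat h_3$ pair with the columns $s^p,d^s$ in one $2\times2$ block, while the rows $\hat f_3,\hat h_1$ pair with $s^s,d^p$ in the other.

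Each of the two $2\times2$ determinants is then a one-line computation: using the identity $-2\mu\xi_1^2+\mu(2\xi_1^2-c_s^{-2}\tau^2)=-\mu c_s^{-2}\tau^2$, they come out to be (nonzero multiples of) $c_s^{-2}\tau^2\xi_3^s$ and $c_s^{-2}\tau^2\xi_3^p$, which are exactly the two values already announced before the lemma. Multiplying, $\det(A_\text{in},A_\text{out})$ equals, up to a nonzero numerical factor and a power of $\mu$, $c_s^{-4}\tau^4\,\xi_3^p\xi_3^s$, which is nonzero everywhere in the hyperbolic region. Since the two rows produced by the Neumann data $h$ carry symbols one order higher than those produced by the Dirichlet data $f$, ellipticity here is understood in the sense fixed just before the lemma, namely after composing those two rows with an elliptic \PDO\ of order $-1$; this normalization does not affect the nonvanishing computed above.

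The calculation is elementary, so there is no genuine obstacle; the only points requiring care are organizational: confirming that the passage to sum and difference is an isomorphism so that ellipticity is genuinely transferred, and keeping the row-order normalization consistent with the convention announced for \r{u_C4}. One could equally expand $\det(A_\text{in},A_\text{out})$ straight from \r{u_C5}, but it is the $s/d$ substitution that makes the block structure, and hence the clean factorization of the determinant into a ``P-part'' and an ``S-part'', transparent — the same mechanism that makes the $4\times4$ system \r{u_C4} split into the two $2\times2$ systems of \r{u_C6}.
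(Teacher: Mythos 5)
Your proposal is correct and is essentially the paper's own argument: the paper likewise passes to the sum and difference of the incoming and outgoing amplitude vectors, reads off the block decoupling from \r{u_C6}, and records the two $2\times2$ determinants $c_s^{-2}\tau^2\xi_3^p$ and $c_s^{-2}\tau^2\xi_3^s$ (up to the factor $-\mu$ you make explicit), with the same order-normalization convention for the Neumann rows. Your computation also shows why the lemma persists in the mixed and elliptic regions, since these determinants stay nonzero when $\xi_3^p$ or $\xi_3^s$ become pure imaginary, which is exactly the remark the paper makes after the lemma.
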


Therefore, \r{u_C3} is elliptic as well.

Lemma~\ref{lemma_A} remains true in the mixed and in the elliptic regions as well, where $\xi_3^s$ or $\xi_s^p$ could be pure imaginary as in \r{xi_3pi}, \r{xi_3si}. Then there is no incoming/outgoing choice of the sign of $\xi_3^s$ and $\xi_s^p$ (which distinguishes $A_\text{in}$ and $A_\text{out}$) but this does not matter because later, we will multiply those expressions, when pure imaginary, with the ``wrong'' signs by zero, see \r{u_C4''}, for example. 
 
\subsection{SV-SH decomposition of S waves} \label{sec_SV-SH}
The principal amplitude of the S wave $u^s=D\times (w_1^s,0,0) = (0,D_3,-D_2)w_1^s$ (plus smoother terms),  
see Proposition~\ref{pr1} and \r{5.12}, corresponding to $w_2^s=0$, evaluated for $\xi_2=0$, has only its second component possibly non-zero. Then it is tangent to $\Gamma$ and normal to the direction of the propagation $\xi=(\xi_1,0,\xi_3)$ (as it should be because it is an S wave). 
In the geophysical literature  (for constant coefficients and a flat boundary),  such waves 
are called \textit{shear-horizontal} (SH) waves since their polarization is tangent to the plane $\Gamma$. Equation \r{u_C4a} then describes the SH waves generated by the Cauchy data when $\xi_2=0$. Note that in our case, ``horizontal'' makes sense only at the boundary. 

The $a_2^s$ terms appearing in \r{u_C4} are the \textit{shear-vertical} (SV) components of the potentials $w$ of the incoming and the outgoing waves. Indeed, using the subscript $b$ to indicate a boundary value (as we did above), when $w_{1,b}=0$, then the principal term of the outgoing/incoming  $u^s_b$ is $(\mp\xi_3(x',D'),0,D_1) w_{2,b}^s$, which gives us a principal amplitude perpendicular to the $\xi_2$ axis (and to the direction $\xi$ of propagation, of course).  Then the oscillations happen in the $\xi_1\xi_2$ plane, vertical to $\Gamma$ (and parallel to $\xi$), hence the name. System \r{u_C4} then describes how the SV and the P waves are created from given Cauchy data.

So far, the computations were done at a fixed point $x_0$ and a fixed covector $\xi^0$ at it,  where the metric is chosen to be Euclidean. Then the orthogonal projection of the principal amplitude to $\Gamma=\{x^3=0\}$ (actually, to $T_{x_0}^*\Gamma$) is the SH component of it, while the projection to the plane through it and the normal is the SV component. We will do this decomposition microlocally near $(x_0,\xi^0)$ on the principal symbol level. 

Note first that at $x_0$, there is a rotational invariance in the $\xi_1\xi_2$ plane. We already have a confirmation of that since we are free to choose coordinates in which $\xi_2=0$ and then we found out that the geometry of the rays and their principal amplitudes depend on the angles with the normal but not on $\xi$ in any other  way. To derive this, we conjugate  both symbols in \r{u_C3a} with the rotational matrix
\be{u_C7}
V:=  \begin{pmatrix}    \xi_1/|\xi|&\xi_2/|\xi|&0\\  -\xi_2/|\xi|&\xi_1/|\xi|&0\\ 0 & 0&1\end{pmatrix}.
\ee
A direct computation yields
\be{u_C8}
V^{-1} \sigma_p(M_\text{out}){(|\xi|,0}) V = \sigma_p(M_\text{out})(\xi), \quad V^{-1} \sigma_p(U_\text{out}){(|\xi|,0}) V = \sigma_p(U_\text{out})(\xi)
\ee
at $x=x_0$. So far, we assumed that the metric was Euclidean at $x_0$. To get that, one can set $\tilde \xi =g^{-1/2}(x_0)\xi$ which can be achieved by a linear change in the $x$ variables; then the Euclidean product in the $\tilde\xi$ variable corresponds to the metric one in the original $\xi$ one. Therefore, replacing $\xi$ above by $g^{1/2}(x_0)\xi$ gives us the principal symbols in the original local coordinates. Varying the point $x_0$, we get principal symbols locally. 

This allows us to define an SV-SH decomposition of S waves on a principal symbol level. In Proposition~\ref{pr1}, if $u^s$ is the S wave  of a solution with certain Cauchy data at $t=0$, then $u^s$ will be an SH wave on $\Gamma$ (up to lower order terms) if $\langle \nu, u^s\rangle|_{\Gamma}= 0$ up to lower order terms applied to the Cauchy data, where $\nu$ is a unit normal covector field. It would be an SV wave on $\Gamma$ if $\langle \nu, (D\times u^s) \rangle|_{\Gamma}= 0$, up to lower order. 
An outgoing S wave $u_\text{out}^s$ near $\Gamma$, which is determined uniquely (up to a smooth term) by its Dirichlet data on $\Gamma$; and therefore by its potential $w_{\text{out},b}$ on $\Gamma$, is an SV wave on $\Gamma$, if $D'\times w_{\text{out},b}=0$ up to a first order \PDO\ applied to $w_{\text{out},b}$, which corresponds to the requirement that the second  component of $w_{\text{out},b}$ must vanish when $\xi'=(\xi_1,0)$. Here, $D'$ is the tangential differential. 
To construct such SV waves, one can take the gradients on $\Gamma$ of  scalar functions with  non-trivial wave front sets.  
The $u^s$ wave is an SH one on $\Gamma$, if $D'\cdot  w_{\text{out},b}=0$ up to a lower order (divergence free). To construct such SH waves, one can take the curl on $\Gamma$ of  scalar functions with  non-trivial wave front sets.  

\subsection{Wave front in the mixed  region} The P wave is evanescent, and there is only one (not incoming and an outgoing one). The number of the unknown amplitudes on the boundary is reduced by one, and the system can be seen to be over-determined. Indeed, then $\xi_3^p$ is pure imaginary and given by \r{xi_3pi}. We still define $A_\textrm{in}$ and $A_\textrm{out}$ as in \r{u_C5}. 
Then \r{u_C4} becomes
\be{u_C4'}
A_\text{in} (0,a_{2,\text{in}}^s)^T + A_\text{out} ( a^p, a_{2,\text{out}}^s)^T = (\hat f_1,\hat f_3,\hat h_1, \hat h_3)^T,
\ee
and \r{u_C4a} stays the same. 
By the expressions of the determinants following \r{u_C6}, the matrix $(A_\text{in}, A_\text{out})$ is still elliptic in this case, i.e., Lemma~\ref{lemma_A} still holds. System \r{u_C4'} then is over-determined and solvable (uniquely) only if the r.h.s.\ belongs to a certain 3D subspace.

\subsection{Wave front  in the elliptic region} In this   case, $\xi_3^s$ is pure imaginary as well as in \r{xi_3si}, both waves are evanescent and the problem is overdetermined, as well. Equation \r{u_C4'} reduces to
\be{u_C4''}
 A_\text{out} ( a^p, a_{2}^s)^T = (\hat f_1,\hat f_3,\hat h_1, \hat h_3)^T,
\ee
and  Lemma~\ref{lemma_A} still holds with both $\xi_3^p$ and $\xi_3^s$ pure imaginary as in \r{xi_3pi}, \r{xi_3si};  therefore we get an overdetermined system as well. In system \r{u_C4a}, both amplitudes are equal and that system is overdetermined as well.

\section{Reflection and mode conversion  of S  and P waves from a free boundary with  Neumann boundary conditions} \label{sec_ref}
Let $\Gamma$ be a surface which separates an elastic medium from a free space (like the Earth from air). The natural boundary condition then is 
\be{RMC1}
Nu=0\quad\text{on $\Gamma$},
\ee
which means zero traction on $\Gamma$, i.e., no normal force, because the exterior has zero stiffness. We study reflection and mode conversion of S  and P waves when they come from the elastic side of $\Gamma$ and hit $\Gamma$. 

This is actually a partial case of the analysis of the boundary value problem with Cauchy data in Section~\ref{sec_el_bvpC} with zero Neumann and Dirichlet data. 
The strategy is the following. We take the trace $Nu_I$ of the incoming wave $u_I$  on the boundary and look for a reflected wave as a sum of an S  and P wave as in \r{RMC2} below. Then $Nu_I$ determines Neumann boundary conditions for those two waves. If $Nu_I$ has a wave front set in the hyperbolic region, we can recover the Dirichlet data for the reflected wave by inverting the elliptic \PDO\ $\Lambda U_\textrm{out}$ in \r{Nb}. Knowing the Dirichlet data, we reduce the problem of constructing an outgoing solution as in section~\ref{sec_ED1}. If $\WF(Nu_I)$ is in the mixed region, we use the construction in section~\ref{sec_ED1m}. Finally, $\WF(Nu_I)$ cannot be in the elliptic region since it corresponds to an incoming solution; therefore, Rayleigh waves cannot be generated by reflection of S  and P waves. One can verify that the principal amplitudes of the reflected S  and P waves can only vanish for a discrete number of incident angles (i.e., on a finite number of curves on the sphere of directions) because they depend analytically on $\xi$ and one can easily eliminate the scenario of one of the waves to vanish for all incoming directions. Those principal amplitudes can actually be computed and in the case of constant coefficients and a flat boundary, they have been computed in the geophysics literature, see, e.g., \cite{aki2002quantitative}. They do have zeros. For our purposes, it is enough to express their solution by Cramer's Rule since we will prove that the determinant does not vanish. Vanishing amplitudes at finite number of angles  is not an obstacle  for the inverse problem we solve because the missing rays can be added to the data by continuity (but that may affect stability). 

\subsection{$\WF(u_{I,b})$ in the hyperbolic region} 
Assume that we have an incident P wave $u_I= u_I^p+u^s_I$, in other words a sum of microlocal solutions near $\Gamma$ with $\WF(u^p_I)\subset  \Sigma_p$ and $\WF(u^s_I)\subset  \Sigma_s$.   As in Section~\ref{sec_GO}, we will restrict the wave front set to $\tau<0$. We extend $u_I$  to a two sided neighborhood of $\Gamma$ as a microlocal solution by extending the coefficients $\lambda$, $\mu$ and $\lambda$ in a smooth way in the exterior. Set $u_{I,b}=u_I|_{\R\times S}$. It follows form the analysis above that $\WF(u_{I,b})$ is in the mixed region. As above, we assume no wave front set in the glancing region. In fact, $\WF(u_{I,b}^p)$ is in the hyperbolic region while $\WF(u_{I,b}^s)$ is there only if the angle of the corresponding rays with the normal is smaller than the critical one given by $c_p|\xi'|=|\tau|$, and it is in the mixed one if the incident angle is greater than the critical one. 


We look for a solution of the form
\be{RMC2}
u = u_I+u_R = (u_I^p+u_I^s) +  (u_R^p + u_R^s),
\ee
where $u_R^p$ and $u_R^s$ are reflected P  and S waves, respectively. 

Let $x=(x',x^3)$ be semigeodesic coordinates near $x_0=0$ so that $x^3>0$ on the elastic side. All equalities below are at a fixed point $x_0$ which can be chosen to be $0$ and modulo lower order terms for the amplitudes. As above, we assume without loss of generality that the metric $g$ is Euclidean at $x=0$ to simplify the notation. We can get the equations below by using \r{Nb}. Let $w_I=(w_{1,I}^s,w_{2,I}^s,w_I^p)$ and $w_R=(w_{1,R}^s,w_{2,R}^s,w_R^p)$ be the solutions $w$ as in  \r{5.11} related to $u_I$ and $u_R$. Since they solve \r{SP-dec}, each singularity of the S  or the P part of $w_I$ reflects by the laws of geometric optics. On the other hand, if $\theta^p$ is the angle which an incoming P singularity makes with the normal, then the corresponding angle $\theta^s$ of the reflected S singularity, see Figure~\ref{pic1}, is related to $\theta^p$ by  Snell's law \r{Snell} 
as it follows directly from \r{14}, see also \cite{SU-thermo_brain}. Also, the incoming and the outgoing directions, and the normal belongs to the same plane, which determines the reflected direction uniquely.  The same law applies to an incoming S wave generating a reflected P one. In the latter case, there is a critical incoming angle $\theta_\text{cr}=\arcsin(c_s/c_p)$ of an S wave so that if $\theta^s>\theta_\text{cr}$, \r{Snell} has no solution for $\theta^p$. Then a reflected P wave does not exist and instead we have an evanescent mode, as we show below. 

\begin{figure}[!ht]
\includegraphics[page=1,scale=1]{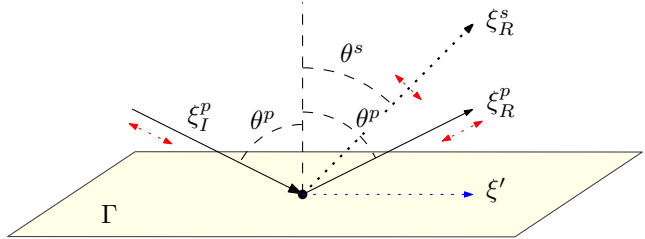}
\caption{Reflected P  and S waves from an incident P wave. The covectors shown  are parallel to the velocity vectors $c_p^2\xi_I^p$  of the incident P wave and the velocities  $c_p^2\xi_R^p$ and $c_s^2\xi_R^s$ of the reflected P  and S waves, respectively. The amplitudes depend on the type of the boundary condition.}\label{pic1}
\end{figure}

We need to solve 
\be{RMC2LU}
M_\text{out} w_{R,b}= -M_\text{in}w_{I,b}
\ee
for $w_{R,b}$. 
Since $M_\text{out}$ is elliptic in the hyperbolic region, \r{RMC2LU} is microlocally solvable. We only need to verify that $w_R$ has non-trivial S  and P components for almost all incoming rays. 

We express $w_I$, $w_R$ and $w_T$ in the form \r{10c} with phase functions solving \r{eik0} with for either $c_p$ or $c_s$ and a choice of the square root sign corresponding to the incoming or the outgoing property of each wave. The corresponding principal amplitudes are $(a_1^s, a_2^s,a^p)$  subindices   $I$, $R$, and $T$ distinguishing between the three waves.

Without loss of generality, we may assume $\xi_2=0$ as in section~\ref{sec_el_bvpC}. We get, see \r{u_C3a}, 
\be{RMC_11}
\begin{split} \left(2 \mu\xi_1^2 -\rho {\tau}^{2}\right)\left( a_{2,R}^s+ a_{2,I}^s \right)
+ 2 \mu \xi_1\xi_3^p \left( a_{R}^p- a_{I}^p \right)&=0,\\  
 2\mu \xi_1\xi_3^s\left( a_{2,R}^s- a_{2,I}^s \right)  -( 2 \mu \xi_1^{2}-\rho\tau^{2}) \left(a_{R}^p+ a_{I}^p \right)&=0,\\
 a_{1,R}^s+ a_{1,I}^s& =0.
\end{split} 
\ee
The system \r{RMC_11} is uniquely solvable, as we know. We determine $ a_{1,R}^s = - a_{1,I}^s$ first, which says that the  SH    wave $U(a_{1,R}^s,0)$ just flip a sign at reflection. The first two equations can be solved to get $a_{2,R}^s$ and $a^p_R$. If $a_R^p =a_{1,R}^s =0$, then $U(0,a_{2,R}^s)$ is the   SV   wave oscillating in the plane normal to the boundary. 

Let $w_I$ be a purely P wave, i.e.,   $w_{1,I}^s =w_{2,I}^s =0$. We want to find out when there is no reflected either P  or an S wave. One could just solve the system but we will analyze it without solving it. If there is no reflected P wave, i.e., if $w^p_R=0$, then \r{RMC_11} implies that both components of the reflected wave must vanish as well which is a contradiction, unless $2\mu\xi_1^2-\rho\tau^2=0$, i.e., if $2c_s^2\xi_1^2=\tau^2$. This may or may not be in the hyperbolic region and defines a cone of incoming directions when it does. Now, assume that there is no reflected S wave, i.e.,  $w_{1,R}^s =w_{2,R}^s =0$. This is possible only when $\xi_1=0$, i.e., when the incoming P wave is normal to the boundary. 

Now, assume that $w_I$ is an S wave. If there is a reflected S wave only, we are in the situation above with the time reversed --- it can only happen for normal rays.  Similarly, if there is a reflected P wave only, this can only happen for incident directions on a specific cone, or it does not. 

\subsection{Wave front set in the elliptic region, Rayleigh waves}\label{sec_Rayleigh}

We are looking for microlocal solutions satisfying  $Nu=0$ with wave front set on the boundary in the elliptic region. We follow Taylor \cite{Taylor-Rayleigh}, where the coefficients are constant and $n=2$ but as noted there, the construction extends to the general case; and will sketch that extension. 
As shown in Section~\ref{sec_7.2}, $\Lambda$ has a characteristic variety $\Sigma_R$, see \r{S_R} and the determinant of its principal symbol, up to an elliptic factor near $\Sigma_R$, is given by $H:=\tau^2-c_R^2|\xi'|^2$. Therefore, microlocal solutions to $Nu=0$ with boundary wave front sets on $\Sigma_R$ would solve a \PDO\ system on $\R_t\times\Gamma$ of real principal type in the sense of \cite{Dencker_polar}. Here, $|\xi'|$ is the norm of the covector $\xi'$ in the metric on $\Gamma$ induced by $g$ (the latter is Euclidean in the isotropic elastic case). 
 One can impose Cauchy data at $t=0$ to get unique (in microlocal sense) solution. Singularities propagate along the null bicharacteristics of $H$, i.e., along the null bicharacteristics of a wave equation on $\R_t\times\Gamma$ with speed $c_R$. 

Next, one uses the solution on $\R_t\times\Gamma$ constructed above as Dirichlet data for a solution near $\Gamma$, in $\Omega$, as in Section~\ref{sec_Ac_evan}.

 \subsection{Wave front set in the mixed  region} This can only happen if there is a non-zero incident S wave hitting the boundary at an angle (with the normal) greater than the critical one  $\theta_\textrm{cr}$, see  \r{theta_cr}. We are still looking for a solution of the kind \r{RMC2}, where $u_I^p=0$ and all singularities of $u_I^s$ hit the boundary at angles greater than  $\theta_\textrm{cr}$. Then $u_R^p$ would be actually an evanescent mode (not actually a P wave by our definition because it would be smooth away from $\Gamma$). To find the boundary values for $w_R$, we need to solve \r{RMC2LU} again with  $M_\text{out}$ as in \r{Nb} but $\xi_3^p$ is given by \r{xi_3pi}. The matrix $M_\text{out}$ is elliptic, see \r{RNC_detm}. Once we have the boundary values for $w_R$, we can construct both solutions as in section~\ref{sec_el_bvp}. 
We also see that the reflected S wave cannot have zero amplitude except for possibly one incident angle; the proof is like in the hyperbolic case. 
 
\subsection{Summary} 
\begin{itemize}
\item[(i)] An incident P wave produces a reflected P wave and a reflected S wave. 
\item[(ii)]  An incident S wave produces a reflected S wave. It produces a reflected P wave only if the incident angle is greater than the critical one; otherwise there is an evanescent P solution. 
\item[(iii)]  By time reversal, given an outgoing P wave, there are incoming S  and P  ones which produce that P wave and no S wave. The roles of those waves can be reversed only when the incident angle of the S wave is greater than the critical one. 
\item[(iv)]  An incident SH wave produces a reflected SH wave only.  
\end{itemize}

\section{The transmission problem for the elastic system} \label{sec_trans}
\subsection{Transmission and reflections of incoming S and P waves.  Zoeppritz' and Knott's equations} \label{sec_Tr_1}

We are interested first how an incoming wave, either and S or an P one, is reflected and transmitted across $\Gamma$. We assume first that the wave front set of the incoming waves on the boundary is in the hyperbolic region on the other side of $\Gamma$ as well. This is a classical case with a long history. 
As in section~\ref{sec_Ac_RT}, we assume that $\Gamma$ divides $\R^3$ locally into $\Omega_+$, where the waves come from, and $\Omega_-$, where they may transmit. 
 Let, as above, $u_I$ be a microlocal solution of the elastic system. 
Similarly to \r{14u}, we are looking for a local solution of the form
\be{u's}
u = u_I+u_R+u_S=   (u_I^p+u_I^s) + (u_R^p + u_R^s)+ (u_T^p+ u_T^s),
\ee
where the expressions in each parentheses is a decomposition into P and S waves, 
 $u_T$, is supported in $\bar\Omega_+$, and $u_I, u_R$ are supported in $\bar\Omega_-$. The terms with a superscript $s$ are microlocally S waves; and those with a superscript $p$ are P waves.  

Denote the restriction of $c_p$ and $c_s$ to $\bar\Omega_+$ and $\bar\Omega_-$, respectively by $c_{p,+}$ or $c_{s,+} $; and  $c_{p,-}$, $c_{s,-}$, respectively. A subscript $b$ denotes a boundary value. We know that  $\WF(u_{I,b})$ is in the hyperbolic or the mixed region on $T^*S$ w.r.t.\ the speeds $c_{p,+}$ and $c_{s,+}$ assuming non-trivial incoming S  and P waves. This may not be true  
on the negative side, i.e., with respect to the speeds  $c_{p,-}$ and $c_{s,-}$ but as we said above, in this section, we are assuming that $\WF(u_{I,b})$ is in the hyperbolic region with respect to them as well. 

\begin{figure}[!ht]
\includegraphics[page=14,scale=1]{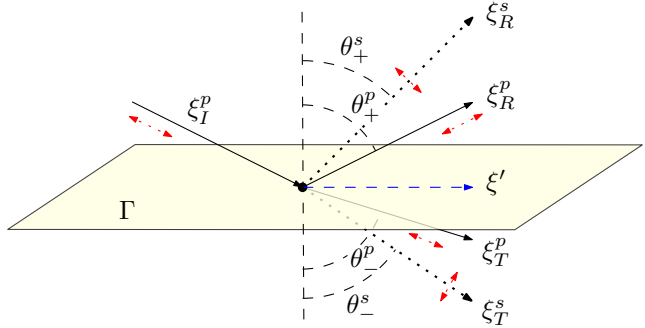}
\caption{The elastic transmission problem: Reflected and transmitted P and S waves from an incident P wave (the incoming S wave not shown). 
In this diagram, each speed gets faster in the lower half space which decreases the angles of the transmitted rays with $\xi'$ compared to the reflected ones or it would create evanescent modes. 
}\label{pic2}
\end{figure}

The transmission conditions  $[u]_\Gamma=0$, $[Nu]_\Gamma=0$ in \r{tr} are equivalent to
\be{RMC4.1}
\begin{split}
U^+_\text{in}w_{I,b}+ U_\text{out}^+w_{R,b} &= U^-_\text{out}w_{T,b},\\
M^+_\text{in}w_{I,b}+ M^+_\text{out}w_{R,b} &= M^-_\text{out}w_{T,b},
\end{split}
\ee
where the $\pm$ superscripts indicate that the corresponding operators act in $\Omega_\pm$. We will show next that this system is elliptic for recovery of $w_{R,b}$ and $w_{T,b}$ given $w_{I,b}$. In fact, ellipticity is a consequence of the energy preservation. Take the dot product of the two equations above (recall that we work at a fixed point where the metric is transformed to an Euclidean one). We get
\be{RMC_en0}
\langle U^+_\text{in}w_{I,b},M^+_\text{in}w_{I,b}\rangle + \langle U^+_\text{out}w_{R,b},M^+_\text{out}w_{R,b}\rangle =\langle U^-_\text{out}
w_{T,b},M^-_\text{out}w_{T,b}\rangle 
\ee
 because it can be shown that $(U^+_\text{in})^* M^+_\text{out}+ (M^+_\text{in})^* U^+_\text{out}=0$ up to smoothing terms. The latter can be proven in the following way. The quadratic form $\langle U_\textrm{in}^+ w_{I,b} , M_\textrm{in}^+ w_{I,b}\rangle$ is proportional to the energy flux of $u_I$ through $\R\times \Gamma$ as can be shown by integration by parts: we get $2\Re\int_{\R\times \Gamma} \langle u_t,\Lambda u\rangle$, see, e.g., \cite{SU-thermo_brain}. Similarly, the other two forms are proportional to energy fluxes, and the signs, after a multiplication by the same constant, are $+,-,+$.  
Then if $w_{I,b}=0$ (i.e., if \r{RMC4.1} is homogeneous), the signs of the forms imply the zero solution only. 
The cancellation  equality above reflects the fact that the incoming and the outgoing wave are microlocally separated. We are not going to prove it this way because below we will get a direct confirmation for the principal symbols, which is what we need. 

 In matrix form, that system is given by
\be{RMC4.1a}
\begin{pmatrix}  U^+_\text{out} & -U^-_\text{out}\\ M^+_\text{out} & -M^-_\text{out}\end{pmatrix}
\begin{pmatrix}  w_{R,b}\\ w_{T,b}\end{pmatrix}
= -\begin{pmatrix} U^+_\text{in}  w_{I,b}\\ M^+_\text{in}  w_{I,b}\end{pmatrix}.
\ee
We compute the principal symbol of the matrix operator applied to $(w_{R,b},w_{T,b})$. As in the previous section, we work at a fixed point where the boundary metric is chosen to be Euclidean. By the invariance under rotations in the $x^1x^2$ plane, we can perform the computations when $\xi_2=0$, as in the previous section. For the principal amplitude of $w$ on $\Gamma$, we will adopt the following notation: $(\SH,\SV,P)^T$, i.e., $\PP=a^p$ in the notation of the previous section, and $a^s=(\SH,\SV)$ is the decomposition of the principal amplitude of the potential (on the boundary) of  the S wave $u^s=D\times w^s$ into shear-horizontal and shear-vertical  terms. We use the subscripts $I,R,T$ for the same purpose as above. 

The system \r{RMC4.1a} then decouples into a $4\times 4$ one and a $2\times2$ one. The $4\times 4$ system has the form 
\be{RMC_1}
A_\textrm{in}^+(\PP_I , \SV_I)^T + A_\textrm{out}^+(\PP_R, \SV_R)^T= 
 A_\textrm{out}^-(\PP_T, \SV_T)^T.
\ee
We use the notations $A_\text{in}$ and $A+\text{out}$, see \r{u_C5} with plus or minus superscripts depending on which side of $\Gamma$ they are related to. By Lemma~\ref{lemma_A}, $(A_\text{in}, A_\text{out})$ is elliptic.

The second system, describing the reflection and the transmission of SH waves, is
\be{RMC4.2}
\begin{pmatrix}
\xi_{3,+}^s & \xi_{3,-}^s \\
\mu_+(\xi_{3,+}^s)^2 & -\mu_-(\xi_{3,-}^s)^2 \end{pmatrix} 
\begin{pmatrix}SH_R\\ SH_T \end{pmatrix}
= SH_I\begin{pmatrix}\xi_{3,+}^s \\-\mu_+(\xi_{3,+}^s)^2 \end{pmatrix} .
\ee
It has a negative determinant, therefore it is elliptic. This decoupling shows that the SH waves do not convert to other modes and reflect and transmit similarly to acoustic waves. We can write \r{RMC4.2} as
\[
\xi_{3,+}^s(\SH_R - \SH_I)=-\xi_{3,-}^s \SH_T, \quad  
\mu_+(\xi_{3,+}^s)^2(\SH_R + \SH_I)=\mu_-(\xi_{3,-}^s)^2 \SH_T.
\]
Multiply those  equations to get
\be{RMC101}
\rho_+c_{s,+}^2(\xi_{3,+}^s)^3\left(|\SH_R|^2 - |\SH_I|^2\right) +\rho_- c_{s,-}^{2} (\xi_{3,-}^s)^3| \SH_T|^2=0.
\ee
when all $w$'s are real. If they are complex, we can justify this by the equality $\Re (z-w)(\bar z+\bar w)=|z|^2-|w|^2$. Without going into details, we mention that this is actually an energy equality of the kind \r{RMC_en0} with $c_{s,\pm}^2$ normalization factors since the column vectors of $U$ in \r{U} are not normalized according to the corresponding speed, $\rho_\pm$ are volume element factors, the  $(\xi_{3,\pm}^s)^2$ factors come from the contribution of an S wave with principal term proportional to $(\xi_1,0,-\xi_3^s  )\times ( 1,0,0)= \xi_3^s(0,1,0)$ to $Nu$; and the extra $\xi_{3,\pm}^s$ factor accounts for the angle of incidence of reflection/transmission. 
Equations \r{RMC4.2} imply that when $\SH_I\not=0$, we have $\SH_T\not=0$; and  $\SH_R=0$ when $\mu_+ \xi_{3,+}^s  - \mu_- \xi_{3,-}^s$ which can happen for a fixed $|\xi'|$.  

We are going back to the system    \r{RMC_1}. 
We will transform it  into a form used in the geophysics literature. Let $\theta_+^p$, $\theta_+^s$, $\theta_-^p$ and $\theta_-^s$ be the angles between the normal and $\xi^p_R$, $\xi^s_R$, $\xi_T^p$ and $\xi_T^s$, respectively, see Figure~\ref{pic2}. Note that those angles are in $[0,\pi/2)$ and we exclude the zero ones below just to be able to put the equations into the desired form and to compare them with classical results. The singularity at $0$ can be resolved by multiplying the corresponding equations by the appropriate sine functions. Then 
\be{RMC_cot}
\xi^p_{3,+}/\xi_1=\cot\theta_+^s, \quad ( 2\xi_1^2-c_{s,+}^{-2}\tau^2 ) /\xi_1^2=1-(\xi_3^p)^2/\xi_1^2=1-\cot^2\theta_+^s, 
\ee
and similarly for the other angles. 

Divide the first two equations in \r{RMC_1} by $\xi_1$ and the last two by $\xi_1^2$, for $\xi_1\not=0$, to put the system in the form $A'\mathbf{a} =B'  \mathbf{b}$ with 
\[
A':= \begin{pmatrix}
1&-\cot\theta_{+}^s&-1&-\cot\theta_{-}^s\\
\cot\theta_{+}^p&1& \cot\theta_{-}^p&-1\\
2\mu_+\cot\theta_{+}^p & \mu_+ (1-\cot^2\theta_+^s) 
&2\mu_-\cot\theta_{-}^p & - \mu_- (1-\cot^2\theta_-^s) \\
 - \mu_+(1-\cot^2\theta_+^s)   &  2 \mu_+ \cot\theta_{+}^s &
   \mu_-(1-\cot^2\theta_-^s)  &    2\mu_-   \cot\theta_{-}^s
\end{pmatrix}
\]
and similarly, $B'$ is the most right $4\times 2$ block of $A'$ with all minus subscripts replaced by plus ones. Here, $\mathbf{a} =(\PP_R, \SV_R, \PP_T, \SV_T)^T $, $\mathbf{b}= (\PP_I, \SV_I)^T$.
  The resulting system is the   \textit{Knott's equations} \cite{Knott1899} derived by Knott in 1899 for a flat boundary and constant coefficients. The form here corresponds to \cite{Sheriff_Seismology}. We write them as  
\[
\begin{split}
(\PP_R+\PP_I)-\cot\theta_{+}^s( \SV_R - \SV_I  )&=w_T^p +\cot\theta_{-}^s  \SV_T, \\
\cot\theta_{+}^p (\PP_R-\PP_I)  +( \SV_R + \SV_I  )&= -\cot\theta_{-}^pw_{2,T}^p+\SV_T,\\
2\mu_+\cot\theta_{+}^p(\PP_R-\PP_I) + \mu_+ (1-\cot^2\theta_+^s) ( \SV_R + \SV_I  )&=  
-2\mu_-\cot\theta_{-}^p w_T^p+ \mu_- (1-\cot^2\theta_-^s)\SV_T , \\
 - \mu_+(1-\cot^2\theta_+^s)  (\PP_R+\PP_I)+   2\mu_+  \cot\theta_{+}^s ( \SV_R - \SV_I  ), &= 
 -  \mu_-(1-\cot^2\theta_-^s)w_T^p  -  2\mu_- \cot\theta_{-}^s \SV_T .
\end{split}
\]
Following \cite{Knott1899}, we multiply the corresponding sides of the first and  the third equations; then do the same thing with the second and the fourth one and add the results to get
\be{RMC_en}
\begin{split}
\mu_+\frac{\cot\theta_+^p}{\sin^2\theta_+^s}\left(|\PP_R|^2- |\PP_I|^2\right) &+
\mu_+\frac{\cot\theta_+^s}{\sin^2\theta_+^s}\left(|\SV_R|^2 - |\SV_I|^2\right)\\
&+ 
\mu_-\frac{\cot\theta_-^p}{\sin^2\theta_-^s}|\PP_T|^2+ 
\mu_-\frac{\cot\theta_-^s}{\sin^2\theta_-^s}|\SV_T|^2= 0,
\end{split}
\ee
therefore, 
\be{RMC7}
\begin{split}
\rho_+\cot\theta_+^p \left(|\PP_R|^2- |\PP_I|^2\right) &+ 
\rho_+ \cot\theta_+^s (|\SV_R|^2-|\SV_I|^2 )\\
& + 
\rho_- \cot\theta_-^p |\PP_T|^2+ 
\rho_- \cot\theta_-^s |\SV_T|^2= 0.
\end{split}
\ee
We used here that $\rho_+\sin^2\theta_+^s=(\xi_1^2/\tau^2)\mu_+$ and similarly for the other terms. 

As noted by Knott \cite{Knott1899}, this is an energy equality, stating that the sums of the energy fluxes of the four generated waves, on a principal symbol level, equals that of the incident one. It is also a version of \r{RMC101}.

Equation \r{RMC7} implies that the homogeneous system $A'\mathbf{a}=0$ has the zero solution only. Therefore, $A'$ is elliptic.  Explicit formulas for the solution of this system can be found in  \cite{aki2002quantitative} for the flat constant coefficient case, and those formulas generalize to our case once we make them invariant. 

\subsection{The general case with incoming waves from both sides} \label{sec_9.2} 
We assume waves coming from both sides, see Figure~\ref{pic2a} some of them possibly evanescent, with Dirichlet (and therefore Cauchy) data of their traces on $\Gamma$ in a small neighborhood of some covector in $T^*\Gamma$. 

 We classify the cases by hyperbolic-hyperbolic (HH), hyperbolic-mixed (HM), mixed-mixed (MM), mixed-elliptic (ME) and elliptic-elliptic (EE) according to the location of the wave front of the Cauchy data on the positive/negative side of $\Gamma$. 

 \subsubsection{The hyperbolic-hyperbolic (HH) case} \label{sec_HH}
Assume a wave front set in the hyperbolic region on both sides. 
 This is automatically true if on each side, we have both S  and P waves. The construction in section~\ref{sec_Tr_1} then generalizes directly. We are going to denote the incoming and the outgoing solutions $w$ on each side by $w_\text{in}^+$, $w_\text{out}^+$, $w_\text{in}^-$, $w_\text{in}^-$. The transmission conditions \r{tr} then take the form
\be{RMC4.1n}
\begin{split}
U^+_\text{in}w_\text{in,b}^++ U_\text{out}^+w_\text{out,b}^+ &=U^-_\text{in}w_\text{in,b}^-+ U_\text{out}^- w_\text{out,b}^-,\\
M^+_\text{in}w_\text{in,b}^++ M_\text{out}^+w_\text{out,b}^+ &=M^-_\text{in}w_\text{in,b}^-+ M_\text{out}^- w_\text{out,b}^-,
\end{split}
\ee
compare with \r{RMC4.1} and \r{u_C3}. 
We use the notation in section~\ref{sec_u_C} but we put superscripts $+$ and $-$ depending on the side of $\Gamma$ we work on. 
 We use the notation $ (\PP,\SV,\SH)$ as above for the principal amplitude of $w$ on $\Gamma$, with the corresponding subscripts and the superscripts. 
Then \r{RMC4.1n} decouples into the following two equations
\be{main_system}
A_\textrm{in}^+(\PP_\text{in}^+ , \SV_\text{in}^+)^T + A_\textrm{out}^+(\PP_\text{out}^+, \SV_\text{out}^+)^T= 
A_\textrm{in}^-(\PP_\text{in}^- , \SV_\text{in}^-)^T + A_\textrm{out}^-(\PP_\text{out}^-, \SV_\text{out}^-)^T
\ee
and 
\be{2x2}
\begin{pmatrix}  \Big. 
-\xi_{3,+}^s & \xi_{3,+}^s \\ 
\mu_+(\xi_{3,+}^s)^2 & \mu_-(\xi_{3,+}^s)^2 \end{pmatrix} 
\begin{pmatrix} \Big. \SH_\text{in}^+\\ \SH_\text{out}^+ \end{pmatrix}
=\begin{pmatrix}\Big. 
\xi_{3,-}^s & -\xi_{3,-}^s \\
\mu_+(\xi_{3,-}^s)^2 & \mu_-(\xi_{3,-}^s)^2 \end{pmatrix}
\begin{pmatrix}\Big. \SH_\text{in}^-\\ \SH_\text{out}^- \end{pmatrix},
\ee
compare to \r{RMC_1} and\r{RMC4.2}.

\begin{figure}[!ht]
\includegraphics[page=3,scale=1]{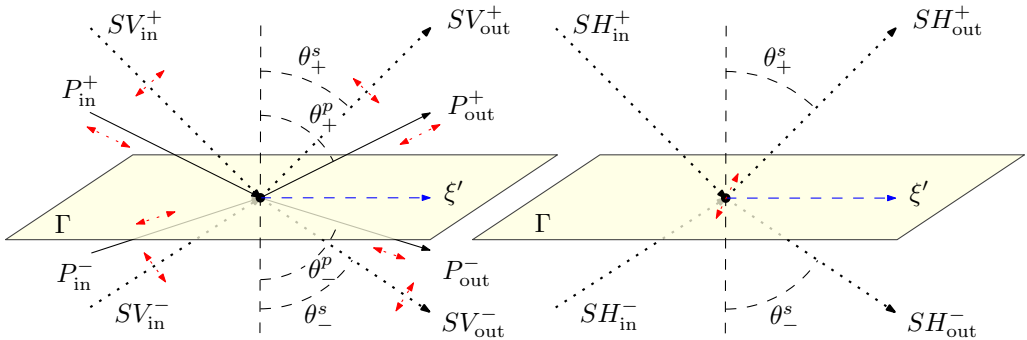}
\caption{The transmission problem in the (HH) case: the general case of eight waves with wave front set projected to the same covector. The SH waves behave as acoustic ones. 
}\label{pic2a}
\end{figure}

The properties of the SH components are similar to those of acoustic waves at an interfaces, see  \r{ac20} there, and the discussion following it. In particular, there is no mode conversion (on principal symbol level at least, which we study). 

As above, we can derive the following energy equality: 
\be{RMC_en2}
\begin{split}
&\rho_+ \cot\theta_+^p \left(|\PP_\text{out}^+|^2- |\PP_\text{in}^+|^2\right) +
\rho_+ \cot\theta_+^s \left( |\SV_\text{out}^+|^2 - |\SV_\text{in}^+|^2\right)\\  
&\quad +\rho_- \cot\theta_-^p   \left(|\PP_\text{out}^-|^2 -|\PP_\text{in}^-  |^2\right) + 
 \rho_- \cot\theta_-^s  \left( |\SV_\text{out}^-|^2- |\SV_\text{in}^-|^2\right)=0.
\end{split}
\ee
For future reference in the case of evanescent modes, we write \r{RMC_en2} as
\be{RMC_en3}
\begin{split}
&\Re\Big(\rho_+ \xi_{3,+}^p \left(|\PP_\text{out}^+|^2- |\PP_\text{in}^+|^2\right) +
\rho_+ \xi_{3,+}^s  \left( |\SV_\text{out}^+|^2 - |\SV_\text{in}^+|^2\right) \\  
&\quad +\rho_- \xi_{3,-}^p   \left(|\PP_\text{out}^-|^2 -|\PP_\text{in}^-  |^2\right) + 
 \rho_- \xi_{3,-}^s  \left( |\SV_\text{out}^-|^2- |\SV_\text{in}^-|^2\right)\Big) =0,
\end{split}
\ee
see \r{RMC_cot}. Written this way, \r{RMC_en3} holds even if the quantities above are not necessarily real; and the proof requires to multiply the first row of \r{main_system} by the \textit{conjugate} of the third one and the same for the second and the fourth ones. This is an energy identity, see the paragraph following \r{RMC_en0}. It says that the combined energy flux of all incoming waves on $\Gamma$ (on principal level) equals that of the outgoing ones. 

\begin{lemma}\label{lemma_A1}
The matrices $(A_\textrm{\rm in}^+, A_\textrm{\rm out}^+)$, $(A_\textrm{\rm in}^-, A_\textrm{\rm out}^-)$, $(A_\textrm{\rm in}^+, A_\textrm{\rm in}^-)$, $(A_\textrm{\rm out}^+, A_\textrm{\rm out}^-)$ are elliptic.  Also, system \r{2x2} is elliptic for $(\SH_\textrm{\rm in}^+, \SH_\textrm{\rm out}^+)$, and also for  $(\SH_\textrm{\rm in}^+, \SH_\textrm{\rm in}^-)$. 
\end{lemma}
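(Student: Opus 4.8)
The plan is to derive every assertion from two things already in hand: Lemma~\ref{lemma_A}, and the energy identity \r{RMC_en3}, which — as noted in the paragraph preceding the lemma — is an algebraic identity satisfied by \emph{every} $8$-tuple of amplitudes obeying \r{main_system}, not merely by a physical reflection/transmission configuration. Throughout we are in the (HH) regime, so by \r{14} the four quantities $\xi_{3,+}^p,\xi_{3,+}^s,\xi_{3,-}^p,\xi_{3,-}^s$ are strictly positive reals, and $\rho_\pm,\mu_\pm>0$; "elliptic" is understood in the order‑reduced sense used after \r{u_C6} (divide the two Neumann rows by an elliptic symbol of order $-1$).

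First I would dispose of $(A_\textrm{in}^+,A_\textrm{out}^+)$ and $(A_\textrm{in}^-,A_\textrm{out}^-)$ by simply invoking Lemma~\ref{lemma_A}: its proof only uses that the two decoupled $2\times2$ blocks built from \r{u_C6} have determinants $c_s^{-2}\tau^2\xi_3^p$ and $c_s^{-2}\tau^2\xi_3^s$, which are nonzero for any admissible Lamé data and any choice of the roots $\xi_3^p,\xi_3^s$. Taking that data to be that of $\Omega_+$, respectively $\Omega_-$, gives both statements. Next, for $(A_\textrm{out}^+,A_\textrm{out}^-)$ and $(A_\textrm{in}^+,A_\textrm{in}^-)$ I would argue by contradiction through \r{RMC_en3}. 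If, say, $(A_\textrm{out}^+\mid -A_\textrm{out}^-)$ had a nontrivial kernel element $(\PP_\textrm{out}^\pm,\SV_\textrm{out}^\pm)$, then $A_\textrm{out}^+(\PP_\textrm{out}^+,\SV_\textrm{out}^+)^T=A_\textrm{out}^-(\PP_\textrm{out}^-,\SV_\textrm{out}^-)^T$ is exactly \r{main_system} with all incoming amplitudes set to zero; plugging this into \r{RMC_en3} the incoming fluxes drop and we are left with $\rho_+\xi_{3,+}^p|\PP_\textrm{out}^+|^2+\rho_+\xi_{3,+}^s|\SV_\textrm{out}^+|^2+\rho_-\xi_{3,-}^p|\PP_\textrm{out}^-|^2+\rho_-\xi_{3,-}^s|\SV_\textrm{out}^-|^2=0$, and positivity of the four coefficients forces all amplitudes to vanish. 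The case $(A_\textrm{in}^+\mid -A_\textrm{in}^-)$ is identical, this time setting the outgoing amplitudes to zero and reading off that the sum of the (positive‑coefficient) incoming fluxes is zero.

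For the $2\times2$ system \r{2x2} the SH waves decouple and do not mode‑convert, so this is a scalar interface problem. The grouping $(\SH_\textrm{in}^+,\SH_\textrm{out}^+)$ requires the left‑hand matrix of \r{2x2} to be invertible, and its determinant is $-(\mu_++\mu_-)(\xi_{3,+}^s)^3\neq0$; the grouping $(\SH_\textrm{in}^+,\SH_\textrm{in}^-)$ requires the matrix formed from the first column of the left matrix and minus the first column of the right one, whose determinant is again a nonvanishing monomial in $\mu_\pm$ and $\xi_{3,\pm}^s$. Alternatively, both follow from the SH energy identity \r{RMC101}, in exactly the way ellipticity of \r{ac20} followed from \r{ac_en1} in the acoustic case.

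I do not expect a genuine obstacle here, since the analytic content — Lemma~\ref{lemma_A} and the cancellation behind \r{RMC_en3} — is already established. The only points needing care are bookkeeping ones: for each named pair one must move the columns of the two complementary amplitude‑vectors in \r{main_system} to the opposite side, keeping track of signs, to pin down the correct $4\times4$ (or $2\times2$) matrix whose non‑degeneracy is being claimed; and one must invoke \r{RMC_en3} in its full strength, as an identity valid for all amplitudes solving \r{main_system}, so that it applies to the homogeneous sub‑systems. The positivity of every flux coefficient $\rho_\pm\xi_{3,\pm}^{p,s}$ is precisely what makes the energy argument close in the (HH) case; in the mixed and elliptic regions some of these become purely imaginary and the argument must be adapted, but that belongs to the subsequent lemmas rather than to this one.
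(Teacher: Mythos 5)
Your proposal is correct and follows essentially the same route as the paper: the first two pairs are handled by Lemma~\ref{lemma_A}, the pairs $(A_\textrm{out}^+, A_\textrm{out}^-)$ and $(A_\textrm{in}^+, A_\textrm{in}^-)$ by specializing the energy identity (the paper cites \r{RMC_en2}, you use its equivalent form \r{RMC_en3}) to the homogeneous system and using positivity of the flux coefficients, and the SH system by checking that the relevant $2\times2$ determinants are nonzero (negative and positive, respectively), exactly as in the paper. The only nit is that the determinant for the grouping $(\SH_\textrm{in}^+,\SH_\textrm{in}^-)$ is a sum of two positive terms rather than a monomial, but it is indeed nonvanishing as you claim.
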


\begin{proof}
The ellipticity of the first two follows from Lemma~\ref{lemma_A}. The ellipticity of the next two follows from the energy equality \r{RMC_en2}. The second statement follows from the fact that the corresponding determinants are negative, and positive, respectively. 
\end{proof}

Note that the ellipticity of $(A_\textrm{\rm in}^+, A_\textrm{\rm out}^+)$ and $(A_\textrm{\rm in}^-, A_\textrm{\rm out}^-)$ holds in the mixed and in the elliptic case as well by the proof of Lemma~\ref{lemma_A}. 

This has the following implications (without the claim that none of the amplitudes vanishes so far); compare with the discussion following \r{ac20}. Recall that we assume that the Cauchy data on the boundary is in the hyperbolic region with respect to all four speeds. 

\begin{itemize}
\item[(ii)] For every choice of the four incoming waves, there is a unique solution (ellipticity) for the four outgoing ones. Indeed, \r{RMC_en3} implies unique solution of the homogeneous problem.  
\item[(ii)] An incoming P wave (without any other incoming waves on either side) creates reflected P  and S waves and transmitted P  and S waves.  
\item[(iii)] The same is true for an incoming S wave.
\item[(iv)] [Control] For every choice of a principal amplitude of an outgoing transmitted P wave, one can choose incoming S  and P waves which would give that pre-assigned transmitted  P wave and no (on the principal level) transmitted S wave. The same is true for incoming P waves. 
\end{itemize}
 
\subsubsection{The hyperbolic-mixed (HM) case} \label{sec_hm}
Assume the wave front set of the Cauchy data is in the mixed  region in $\Omega_-$ but still in the hyperbolic one in $\Omega_+$. Since we work in the elliptic region for $c_{p,-}$, we will call the principal amplitude of the corresponding microlocal solution $P_-$ (no in/out), see Figure~\ref{pic_hm}.  

\begin{figure}[!ht]
\includegraphics[page=10,scale=1]{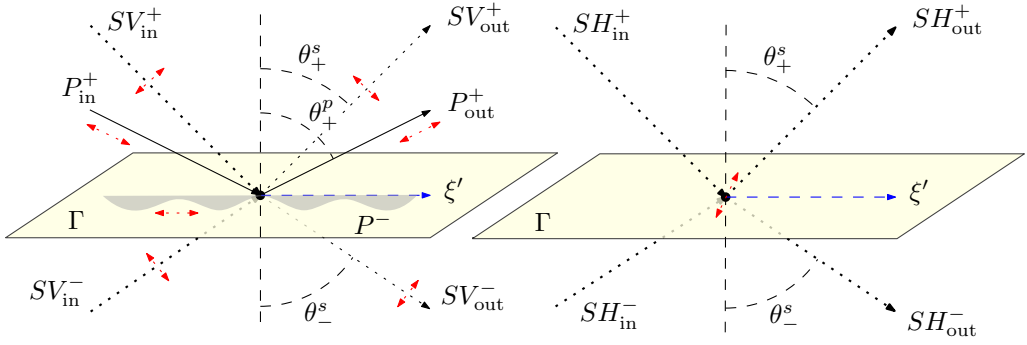}
\caption{The transmission problem in the hyperbolic-mixed (HM) case: The $P^-$ wave is evanescent, no incoming/outgoing parts. SH waves do not create  P waves. 
}\label{pic_hm}
\end{figure}

The approach we follow is the same as above --- we want to analyze the system \r{RMC4.1n}, and for example solve it for all outgoing waves given the incoming ones by proving ellipticity. What changes is that  $\xi_{3,-}^p$ becomes pure imaginary, see \r{xi_3pi}. One should also change the sign of $\xi_{3,-}^p$ in $A_\text{out}$ since there are not plus/minus square roots but those entries will be multiplied by zero below. 
 Then \r{main_system} reduces to 
\be{main_system2}
A_\textrm{in}^+(\PP_\text{in}^+ , \SV_\text{in}^+)^T + A_\textrm{out}^+(\PP_\text{out}^+, \SV_\text{out}^+)^T= 
A_\textrm{in}^-(0 , \SV_\text{in}^-)^T + A_\textrm{out}^-(\PP^-, \SV_\text{out}^-)^T,
\ee
see \r{u_C4'}. 
The energy equality \r{RMC_en3} reduces to 
\be{RMC_en4}
\rho_+ \xi_{3,+}^p \left(|\PP_\text{out}^+|^2- |\PP_\text{in}^+|^2\right) +
\rho_+ \xi_{3,+}^s  \left( |\SV_\text{out}^+|^2 - |\SV_\text{in}^+|^2\right) + 
 \rho_- \xi_{3,-}^s  \left( |\SV_\text{out}^-|^2- |\SV_\text{in}^-|^2\right)=0, 
\ee
see also \r{ac_fir3}. 
We get that for any choice of the three incoming waves, the resulting system for the three outgoing ones plus $P^-$ is elliptic. Indeed, it is enough to show this for the homogeneous system. If all incoming waves vanish, then 
\r{RMC_en4} implies $P_\text{out}^+ = SV_\text{out}^+=  SV_\text{out}^-=0$. Then the only possible non-zero vector in \r{main_system2} is $P^-$ but then we can see directly that \r{main_system2}  implies $P^-=0$. System \r{2x2} about the SH waves is unaffected by the ellipticity of the $P$ wave. 
Therefore, constructing the outgoing solution is a well-posed (elliptic) problem.

 As far as control from each side is concerned, on the negative one, where $P^-$ lies, the Cauchy data is structured; then so is on the positive side.  Therefore, the configuration on the positive side  cannot be controlled from the negative one. On the other hand, we can create any hyperbolic configuration on the negative side with appropriate waves on the positive one. In particular, if we want $P^-=0$, $SV^-_\text{in}=0$ and $SV^-_\text{out}\not=0$, we can take the Cauchy data of it and  solve \r{main_system2} for the plus amplitudes since on the positive side, we are in the hyperbolic region and the Cauchy problem is elliptic. 

Control for SH waves on principal level is the same as in the acoustic case since those waves do not create reflected/transmitted P or SV waves. Since we defined SH/SV waves on principal level only, and the system for the amplitudes is decoupled only on a principal level a priori, the control question needs a further clarification when  evanescent P  modes are possible. Let us say that we want to create S waves on the negative side with given principal amplitudes $\SV_\text{in}^-$, $\SV_\text{out}^-$, $\SH_\text{in}^-$, $\SH_\text{out}^-$, $P^-$. The argument above says that we can chose the principal amplitudes of the waves on the top to make this happen on a principal symbol level, see Figure~\ref{pic_hm}. Then we fix the six  waves on the positive side which have those amplitudes as their full ones in those coordinates. For each one, we need to solve, up to infinite order, a transmission, not a control problem, which is well posed. This would possibly create lower order waves on the negative side but it will not change the principal parts. In particular, if we want $\SV_\text{in}^- =\SV_\text{out}^- = \SH_\text{in}^-$ but  $\SH_\text{out}^-\not=0$, this step could create lower order $\SV_\text{in}^-$, $\SV_\text{out}^-$, $\SH_\text{in}^-$ waves. This is not a problem since we will need the principal parts later only. We apply the same argument in the cases below. 

\subsubsection{The mixed-mixed (MM) case} \label{sec_t_mm}
Then  on both sides, the S waves are hyperbolic, and $P^-$ and $P^+$ are evanescent, see Figure~\ref{pic_mm}.  In this case,  $\xi_{3,\pm}^p$ are pure imaginary, see \r{xi_3pi}. Then there is only one evanescent P wave 
in $\Omega_-$ and one in $\Omega_+$ and we omit the subscripts ``in/out'' for them.

As above, we  show below that on a principal symbol level, the energy is carried by the  S waves only. We also check directly that the homogeneous problem (no incoming waves) has the trivial solution only, including trivial evanescent modes $P^-$ and $P^+$. Therefore, we still get a well-posed problem for the outgoing solution. 

In \r{main_system2}, we can formally set $P_\text{in}^+=0$, $P_\text{out}^+=P^+$ and in the energy equality \r{RMC_en4}, we remove the $P$ amplitudes to get
\be{msmm}
A_\textrm{in}^+(0, \SV_\text{in}^+)^T + A_\textrm{out}^+(\PP^+ , \SV_\text{out}^+)^T= 
A_\textrm{in}^-(0 , \SV_\text{in}^-)^T + A_\textrm{out}^-(\PP^-, \SV_\text{out}^-)^T,
\ee
and
\be{RMC_en4mm}
\rho_+ \xi_{3,+}^s  \left( |\SV_\text{out}^+|^2 - |\SV_\text{in}^+|^2\right) + 
 \rho_- \xi_{3,-}^s  \left( |\SV_\text{out}^-|^2- |\SV_\text{in}^-|^2\right)=0, 
\ee
with $\xi_{3,\pm}^p$  pure imaginary as in \r{xi_3pi}. We will show that \r{msmm} is elliptic for $\SV_\text{out}^-$, $\SV_\text{out}^+$, $P^-$, $P^-$, given $\SV_\text{in}^-$, $\SV_\text{in}^+$. As before, it is enough to show that the homogeneous system is uniquely solvable. This follows from Lemma~\ref{lemma_A} or Lemma~\ref{lemma_A1} which remain true in the elliptic and the mixed regions. 
\begin{figure}[!ht]
\includegraphics[page=11,scale=1]{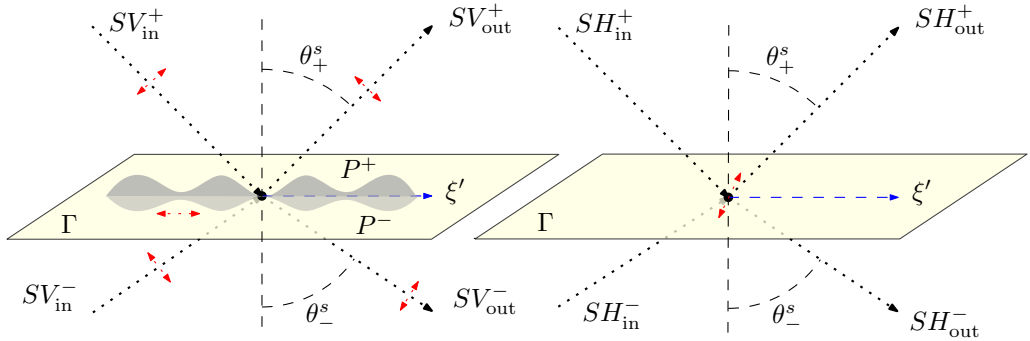}
\caption{The transmission problem in the mixed-mixed (MM) case: The $P^-$ and the $P^+$ waves are evanescent, no incoming/outgoing parts. The SH waves behave as acoustic ones. 
}\label{pic_mm}
\end{figure}
The SH waves  behave  as in the acoustic case, see \r{ac20} and \r{ac_en1} and as in the (HM) case.  

Control is possible for the SH waves. Let us say that we want to create SH waves on the negative side with prescribed principal amplitudes $\SH_\text{in}^-$, $\SH_\text{out}^-$ and no other waves there. On principal level, we choose $\SH_\text{in}^+$, $\SH_\text{out}^+$ to achieve that. Then, as above, we chose such S waves on the positive side with those principal amplitudes. Solving the direct transmission problem with (hyperbolic only) sources on the positive side, we may get  additional waves on the negative ones as shown on Figure~\ref{pic_mm}, left,  but they are lower order. 

One can also show that control for SV waves on either side is possible from the other one, which would create evanescent $P^+$ and $P^-$ modes as well. Indeed, to show that given any $SV^-_\text{in}$, $SV^-_\text{out}$, we can choose $SV^+_\text{in}$, $SV^+_\text{out}$ creating those waves plus the ``byproducts'' $P^-$, $P^+$, we need to show that \r{msmm} is elliptic for $SV^+_\text{in}$, $SV^+_\text{out}$, $P^-$, $P^+$. A direct but tedious computation shows that the determinant of this system equals
\[
-2\xi_{3,+}^s\mu_+\tau^2c_{s,+}^{-2}  c_{s,+}^{-4}  \left(  2\xi_1^2(\mu_+-\mu_-)(\xi_{3,+}^p+\xi_{3,-}^p)c_{s,-}^{2}c_{s,+}^{2} +\tau^2(\xi_{3,+}^p\mu_-  c_{s,+}^{2}-  c_{s,-}^{2} \mu_+\xi_{3,-}^p ) \right).
\]
The algebraic structure of this expression implies that this determinant is not identically zero for all $\xi_1$ unless none of the coefficients jump at the interface, and we assumed that this could not happen. Therefore, it could be zero for a discrete set of $\xi_1$'s only and then we have control.

\subsubsection{The hyperbolic-elliptic (HE) case} \label{sec_t_he}
Assume that both the P  and the S  waves on the negative side are evanescent but they are hyperbolic on the plus side, see Figure~\ref{pic_he}. Then we have full reflection on the positive side with respect to all waves. System \r{main_system2} reduces to
\be{ms32}
A_\textrm{in}^+(\PP_\text{in}^+ , \SV_\text{in}^+)^T + A_\textrm{out}^+(\PP_\text{out}^+, \SV_\text{out}^+)^T=  A_\textrm{out}^-(\PP^-, \SV^-)^T,
\ee
where $P^-$ and $SV^-$ are evanescent and $\xi_{3,-}^p$ and $\xi_{3,-}^s$ are pure imaginary as in \r{xi_3pi} and \r{xi_3si}. The energy equality takes the form
\be{en_mm}
\rho_+ \xi_{3,+}^p \left(|\PP_\text{out}^+|^2- |\PP_\text{in}^+|^2\right) +
\rho_+ \xi_{3,+}^s  \left( |\SV_\text{out}^+|^2 - |\SV_\text{in}^+|^2\right)  =0.
\ee
\begin{figure}[!ht]
\includegraphics[page=12,scale=1]{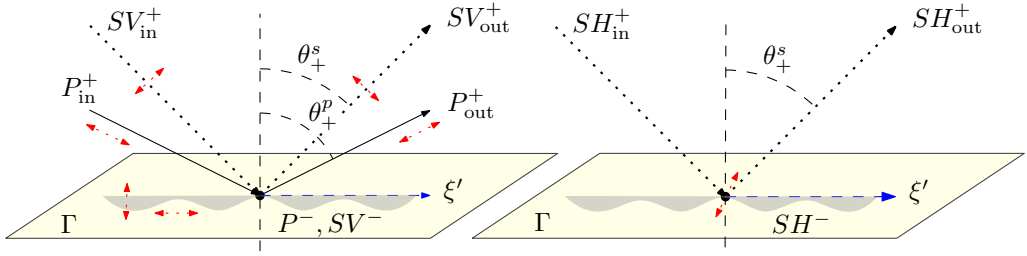}
\caption{The transmission problem in the hyperbolic-elliptic (HE) case: The $P^-$ and the $S^-$ waves are evanescent, no incoming/outgoing parts. The SH waves behave as acoustic ones with a total reflection on the top.} \label{pic_he}
\end{figure}
This is similar to the hyperbolic case in the Cauchy boundary value problem, see \r{sec_el_bvpC}. System \r{ms32} is elliptic for solving for $\SV_\text{out}^-$, $\SV_\text{out}^+$, $P^-$, $\SV^-$ by \r{en_mm} and Lemma~\ref{lemma_A1}. 
The SH waves are treated similarly. They experience a full reflection as in the acoustic case. 


\subsubsection{The mixed-elliptic (ME) case} \label{sec_t_me}
Assume that only the $\SV^+$ waves are hyperbolic. Then we have full reflection of the S wave on the positive side with transmitted evanescent $P^-$ and $S^-$ waves and mode converted $P^+$ one on the positive side, see Figure~\ref{pic_me}. System \r{main_system2} reduces to
\be{me32}
A_\textrm{in}^+(0, \SV_\text{in}^+)^T + A_\textrm{out}^+(\PP^+, \SV_\text{out}^+)^T=  A_\textrm{out}^-(\PP^-, \SV^-)^T,
\ee
where $P^-$, $\SV^-$ and $\PP^+$ are evanescent and $\xi_{3,\pm }^p$ and $\xi_{3,-}^s$ are pure imaginary. The energy equality takes the form
\be{en_me}
\rho_+ \xi_{3,+}^s  \left( |\SV_\text{out}^+|^2 - |\SV_\text{in}^+|^2\right)  =0.
\ee
\begin{figure}[!ht]
\includegraphics[page=13,scale=1]{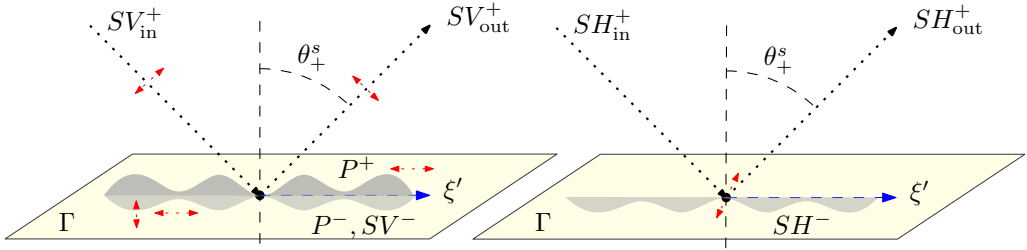}
\caption{The transmission problem in the mixed-elliptic (ME) case: Only the $\SV^+$ waves are hyperbolic. The SH waves behave as in the acoustic case and as in the (HE) case.} \label{pic_me}
\end{figure}
System \r{me32} is elliptic for solving for   $\SV_\text{out}^+$, $P^-$, $\SV^-$ by \r{en_me} and Lemma~\ref{lemma_A1}. 
The SH waves are treated similarly. They experience a full reflection as in the acoustic case. 


\subsubsection{The elliptic-elliptic (EE) case. Stoneley waves} \label{sec_t_ee}
We assume now that all waves on both sides are evanescent. Such solutions cannot be created by S or P waves hitting $\Gamma$ but they could be created by boundary sources. We will sketch the construction of such solution known as Stoneley waves first described by R.~Stoneley  \cite{Stoneley} in 1924  in case of flat boundary and constant coefficients, see also \cite{Yamamoto_elastic_89} for a curved boundary and constant coefficients.

We call the evanescent  amplitudes $P^-$, $P^+$, $SV^-$, $SV^+$. 
Then 
\be{ms_ee}
 A_\textrm{out}^+(\PP^+, \SV^+)^T=  A_\textrm{out}^-(\PP^-, \SV^-)^T.
\ee
Since $\xi_{3,-}^p$, $\xi_{3,-}^s$, $\xi_{3,+}^p$ and $\xi_{3,+}^s$ are all pure imaginary, with a positive imaginary part, the matrices above do not really have outgoing properties and the subscript ``out'' could be omitted. In this region, 
\be{u_C5x}
A_\text{out}^\pm := \begin{pmatrix}
\xi_1&\xi_{3,\pm}^s \\
-\xi_{3,\pm}^p&\xi_1 \\
-2\mu\xi_{3,\pm}^p \xi_1 & \mu(2\xi_1^2-c_{s}^{-2}\tau^2)  \\
 - \mu( 2\xi_1^2-c_{s}^{-2}\tau^2 )  &   -2\mu  \xi_{3,\pm}^s\xi_1 
\end{pmatrix},
\ee
see \r{u_C5}.  Then $F:= \det(A_\text{out}^+,- A_\text{out}^-)$ is a positively homogeneous function of $(\tau,\xi_1)$ of order $6$. Writing $F$ as $\xi_1^6$ times a function $F_0$ of $s:= |\tau|/\xi_1$ (and the base point $x'$), we get that $(A_\text{out}^+,- A_\text{out}^-)$ is elliptic (again, after adjusting the order of the last two rows from 2 to 1) where $F_0(s,x')\not=0$. Passing to an invariant formulation as in Section~\ref{sec_SV-SH}, we can replace $\xi_1$ by $|\xi'|$; then $s= |\tau|/|\xi'|$ with the norm of $\xi'$ being the covector one w.r.t.\ the metric $g$, which in the isotropic case is the boundary metric induced by the Euclidean one. Then $F$ is a homogeneous symbol. Assume that $F_0$ has a simple zero for some $s=c_\text{St}$ corresponding to the elliptic-elliptic region, i.e., in $s<\min(c_{s,-},c_{s,+})$. Then $F=\left(\tau^2-c_\text{St}^2|\xi'|^2\right)\!\tilde F$ with $\tilde F$ elliptic near $\Sigma_\text{St}:= \{\tau^2=c_\text{St}^2|\xi'|^2\}$. Then $(A_\text{out}^+,- A_\text{out}^-)$ is a \PDO\ of real principal type (again, the order can be adjusted to be one for all rows) in the sense of \cite{Dencker_polar}. Singularities on $T^*\Gamma$ propagate along the null bicharacteristics of the Hamiltonian $H:= \left(\tau^2-c_\text{St}^2|\xi'|^2\right)$. This is a wave type of Hamiltonian with a wave speed $c_\text{St}$ which is slower that the S and the P speeds on either part of $\Gamma$. A well posed problem would be, for example, one with Cauchy data on $\{t=0\}\times\Gamma$. 

For every microlocal solution on $\R_t\times\Gamma$, we can use its Dirichlet data to extend it to a microlocal solution on both sides of $\Gamma$ as in Section~\ref{sec_6.3}, see also Section~\ref{sec_Rayleigh}. Rayleigh waves,  can be considered as a limit case of Stoneley waves.  

The function $F_0$ does have (simple) zero in some cases, at least. Some examples can be found in Stoneley's original  paper \cite{Stoneley}. 

\subsection{Summary}\label{sec_Tr_summary}
We  summarize some of the results above as follows. 
\begin{itemize}
	\item[(HH)] the \textbf{hyperbolic-hyperbolic} case:  we have both P and S waves on either side; each incoming wave creates two reflected and two transmitted (refracted) ones, with mode conversion. 

	\item[(HM)] The \textbf{hyperbolic-mixed} case: on one side there are both P and S waves, on the other one, only S waves exists (as solutions propagating singularities);  the P wave is evanescent. On other hand, there is  total internal reflection of P waves but they can still create transmitted S waves.

	\item[(HE)] The \textbf{hyperbolic-elliptic} case: the S and the P waves on one side are hyperbolic;  the  S and the P waves on other side are evanescent. Then there is a full reflection from the first side, and the transmitted waves are only evanescent. 

	\item[(MM)] The \textbf{mixed-mixed} case: the S waves on both sides are hyperbolic (propagate singularities);  the P waves on both sides are evanescent. In particular, an incoming S wave reflects and refracts; and it creates two evanescent P waves on either side by mode conversion. 

	\item[(ME)] The \textbf{mixed-elliptic} case: Only the S wave on one side is hyperbolic. In particular, an incoming S wave reflects; and it creates two evanescent P waves on either side by mode conversion and one ``reflected'' P evanescent one. 

	\item[(EE)] The \textbf{elliptic-elliptic} case: All waves are evanescent. Such waves cannot be created by a P or an S wave hitting $\Gamma$ but it could be created by a boundary source. The transmission problem may lose ellipticity and allow for solutions (Stoneley waves) concentrated near $\Gamma$. 
\end{itemize}

\subsection{Justification of the parametrix} 

In the construction above, we work with microlocal solutions which may have singularities but they, and their first derivatives have traces in timelike surfaces. We assume that solutions have wave front set disjoint from  bicharacteristics tangential to some of interfaces which can be achieved by choosing the wave front of their Cauchy data disjoint from projections of such directions in $T^*\Gamma$. The later set has a zero measure on $S^*\bo$ for $t$ restricted to any fixed finite interval. The construction actually provides an FIO,  mapping $f$ to the microlocal outgoing solution $u$ with that boundary data.  

To justify the parametrix, we need to subtract it from the actual solution and show that the difference is smooth up to each interface $\Gamma_i$. Such a difference $w$ would solve a non-homogeneous problem 
\be{1A}
\left\{\begin{array}{lr}
u_{tt} -Ew \in C^\infty(\R\times\bar \Omega),\\ 
  w|_{\R\times\bo}   \in C^\infty(\R\times \bo),\\
\ \! \! [w]|_{\Gamma_j} , [Nw]|_{\Gamma_j}    \in C^\infty(\R\times \Gamma_j), \quad j=1,\dots,k, \\
  w|_{t<0}=0. 
\end{array}\right.
\ee

A slightly weaker version of this claim can be proven, which is sufficient for our purposes. We claim that $w$ is $C^\infty$ away from $\R\times \Gamma_j$ and $\R\times\bo$, and indeed is conormal at these two in the precise sense that $w\in\Hbloc^{1,\infty}$, meaning $w$ and its first derivatives are in $L^2$ locally, and the same remains true if vector fields tangent to $\R\times \Gamma_j$ and $\R\times\bo$ are applied to these iteratively. While this is standard in the scalar case, a proof for (principally) scalar wave equations, for transmission problems, based on quadratic form considerations, showing regularity relative to the quadratic form domain, is given in \cite{DUV:Diffraction}. This proof uses b-pseudodifferential operators, introduced by Melrose \cite{Melrose:Transformation}, see also \cite{Melrose:Atiyah}, and \cite{DUV:Diffraction} for a brief summary. The simple observation made in \cite[Section~4]{DUV:Diffraction} is that when one has an internal hypersurface, such as $\R\times\Gamma_j$, one can treat it as a boundary for this b-analysis by using b-pseudodifferential operators on each half-space (which are manifolds with boundary) with matching normal operators at the common boundary; this was used in \cite[Section~4]{DUV:Diffraction} to prove propagation of singularities in the principally scalar setting. The elastic problem is not principally scalar, which indeed makes the proof of propagation of singularities significantly more difficult using these tools. However, the propagation of global regularity, in the sense that regularity, as measured by $\Hbloc^{1,m}$ (i.e.\ the space with $m$ b-, or tangential, derivatives relative to $H^1_\loc$), propagates from $t<0$ to $t\geq 0$ when the right hand side has regularity in $\Hbloc^{-1,m+1}$ (i.e.\ the space with $m+1$ b-, or tangential, derivatives relative to $H^{-1}_\loc$) is straightforward as it does not require microlocalization; slightly modified energy estimates work. This has been carried out in detail by Katsnelson for the elastic wave equation on manifolds with edges in \cite[Chapter~11]{Katsnelson:PhD}. The latter are actually more complicated than our setting as the domain of the operator is more delicate, and an essentially identical method of proof works in our case. We also refer to \cite{Katsnelson:Diffraction} for a brief summary.

We refer to   \cite{Yamamoto_09} as well, where boundary regularity in the case of constant parameters has been studied.


\section{The inverse problem}\label{section_GC}
Assume that there exist two  smooth non-positive functions $\foliation_s$ and $\foliation_p$ in $\Omega$ with $\d\foliation\not=0$, $\foliation^{-1}(0)=\bo$, and $\foliation^{-1}(-j)=\Gamma_j$, $j=1,\dots,k$ where $\foliation$ is either $\foliation_s$ or $\foliation_p$.  Assume that the level sets $\foliation_{s}^{-1}(c)$, $\foliation_{p}^{-1}(c)$ are strictly convex w.r.t.\ the  speed $c_s$, $c_p$, respectively, when viewed from $\Gamma_{0}=\bo$. 
Of course, we may have just one such function, i.e., $\foliation_s=\foliation_p$ is possible. 

Recall that  the foliation condition implies non-trapping  as noted in \cite{SUV_localrigidity}, for example. In our case, this means that rays in $\Omega_j$ not hitting $\Gamma_j$ would hit $\Gamma_{j-1}$ both in the future and in the past. 

\subsection{Recovery in the first layer $\Omega_1$} 
We show first that we can recover $c_p$ and $c_s$, and then $\rho$, in the first layer $\Omega_1$, i.e., between $\bo$ and $\Gamma_1$. In other words, if $\tilde \rho$, $\tilde \mu$, $\tilde \nu$ is another triple of coefficients which have the same piecewise smooth structure with jumps across some $\tilde \Gamma_j$,  producing the same DN map, then they coincide with the non-tilded ones. In the lemmas below, we need solutions with a single  incoming singularity (more precisely, with a single radial ray due to the conic nature of the wave front sets) which we can trace until its branches hit $\bo$ again. We can do this in two ways: first, we can have $f$ in \r{1} with such a single singularity but when we need a specific polarization, we can achieve that by choosing  the potential $w$ appropriately, with that singularity. Since the operators $U_\text{in}$ and $U_\text{out}$, see \r{Ub} are elliptic in all regions, then the boundary trace of the potentials would have the same wave front sets as the boundary trace of the solution $u$. Or, one can have  $\WF(f)$ in a small set by choosing   $\WF(w)$ on the boundary small enough and then pass to a limit when $\WF(f)$ shrinks to a single point. Since the arguments based on SH/SV waves require us to trace the leading  singularities, i.e., we want to have a well defined order,  working with singularities in a small conic set, for example conormal ones, is more convenient. We assume in this section that $g$ is Euclidean since we will need the results of Rachele \cite{Rachele_2000, Rachele03}, and Bhattacharyya \cite{Bhattacharyya_18},  see Remark~\ref{rem_Rachele}. 

\begin{lemma}\label{lemma_G1} 
Under the convex foliation assumption,   $\Lambda$, known for $T\gg1$ determines uniquely  $\Gamma_1$, $c_s$ and $c_p$ in $\Omega_1$. If, in addition, $c_p\not=2c_s$ pointwise in $\Omega_1$, then $\rho$ is uniquely determined in $\Omega_1$ as well. 
\end{lemma}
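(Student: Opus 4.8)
The plan is to extract from $\Lambda$ the broken‑ray lens relations associated to the S‑ and P‑geodesic flows in $\Omega_1$ together with the interface $\Gamma_1$, then to apply the convex foliation rigidity of \cite{SUV_localrigidity} to recover the two speeds, and finally to invoke \cite{Bhattacharyya_18} for the density. First, produce mode‑separated probing waves: take $f$ conormal on $\R\times\bo$ with $\WF(f)$ near a single covector $(t_0,x_0',\tau,\xi_0')$ in the hyperbolic region, $\tau<0$, and $T$ large enough that every ray issued from $\WF(f)$ into $\Omega_1$ returns to $\bo$ (possible since $\Omega_1$ is bounded and $c_s,c_p$ are bounded below). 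Since $U_\textrm{out}$ in \r{Ub} is elliptic, we may choose $f$ so that the outgoing solution $u$ is, microlocally near $\bo$, a purely P (resp.\ purely S) wave, entering $\Omega_1$ along the null‑geodesic of $c_p^{-2}g$ (resp.\ $c_s^{-2}g$) issued from $(x_0',\xi_0^p)$ (resp.\ $(x_0',\xi_0^s)$), see \r{14}--\r{14n} and Proposition~\ref{pr1}. By the non‑trapping consequence of the convex foliation, this geodesic either returns to $\bo$ without meeting $\Gamma_1$, or meets $\Gamma_1$ transversally; in the latter case the reflected waves of the same type are again null‑geodesics of the same speed and, by strict convexity of $\Gamma_1$, return to $\bo$. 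Reading $\WF(\Lambda f)=\WF(Nu|_{\R\times\bo})$, sorting the returning singularities by type via the polarizations of Proposition~\ref{pr1} and the ordering $c_s<c_p$, and letting $\WF(f)$ shrink to $(x_0',\xi_0')$, recovers the localized travel times and exit data of the broken $c_p$‑ and $c_s$‑geodesics in $\Omega_1$ reflecting at $\Gamma_1$, in particular for the ``free'' rays that never reach $\Gamma_1$.

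Next, rays issued from $\bo$ tangent to a level set $\foliation^{-1}(c)$ with $-1<c<0$ stay in $\{\foliation\ge c\}\subset\bar\Omega_1$ and return to $\bo$ without meeting $\Gamma_1$; feeding the localized travel times of these free rays into the layer‑stripping scheme of \cite{SUV_localrigidity}, applied with the strictly convex foliations $\foliation_p^{-1}([-1,0])$ and $\foliation_s^{-1}([-1,0])$, determines $c_p$ and $c_s$ respectively throughout $\Omega_1$ — the layer‑by‑layer analogue of the interface‑free recovery of \cite{SUV_elastic}. The surface $\Gamma_1$ is then recovered as the set of reflection points of the broken‑ray relations of the previous step; since the global convex foliation makes every point of $\Gamma_1$ a reflection point of some ray from $\bo$, this set is exactly $\Gamma_1$. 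Hence if $(\tilde\rho,\tilde\mu,\tilde\nu)$ produces the same $\Lambda$, then $\tilde\Gamma_1=\Gamma_1$ and $\tilde c_p=c_p$, $\tilde c_s=c_s$ in $\Omega_1$.

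Finally, with $c_p$ and $c_s$ known the geometry of both modes is known, but by Remark~\ref{rem_Rachele} the leading amplitudes of $u^p$ and $u^s$, hence the principal symbol of $\Lambda$ in the hyperbolic region, are independent of $\rho$; one must therefore use the next‑order transport equations, whose coefficients involve the sub‑principal symbol of $E$ and thus $\rho$ and its first derivatives. This is precisely the recovery carried out in \cite{Bhattacharyya_18}, which applies here since $g$ is Euclidean in this section: given $c_p,c_s$, the map $\Lambda$ determines $\rho$ in $\Omega_1$ provided $c_p\neq 2c_s$ pointwise (equivalently $\lambda\neq 2\mu$), the non‑degeneracy condition under which the associated transport/recovery system is solvable. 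Thus $\tilde\rho=\rho$ in $\Omega_1$ as well, which completes the proof.

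The main obstacle is twofold. First, extracting from $\Lambda$ genuine single‑speed lens data despite mode conversion at $\Gamma_1$: one must cleanly isolate the P$\to$P and S$\to$S broken rays from the mode‑converted ones so that the hypotheses of \cite{SUV_localrigidity} apply verbatim, which is exactly what the mode‑separated probing waves and the polarization/speed sorting above are designed to achieve. Second, the recovery of $\rho$ cannot be read off from principal symbols — Remark~\ref{rem_Rachele} shows the polarizations do not see $\rho$ — so it rests on the more delicate amplitude transport analysis of \cite{Bhattacharyya_18}, where the hypothesis $c_p\neq 2c_s$ enters.
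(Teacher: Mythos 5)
Your overall strategy coincides with the paper's: probe with mode-separated conormal waves, use the non-trapping consequence of the convex foliation, feed the resulting single-speed lens data into \cite{SUV_localrigidity}, and finish with \cite{Bhattacharyya_18} for $\rho$ (and you correctly identify, via Remark~\ref{rem_Rachele}, why $\rho$ cannot be seen at the principal level). The one genuinely different sub-step is the recovery of $\Gamma_1$: you propose to read it off as the locus of reflection points of broken (reflected) rays, which forces you to confront mode conversion at $\Gamma_1$ and the possible vanishing of reflected amplitudes at special angles — complications you acknowledge but only sketch how to resolve. The paper deliberately avoids broken rays in this proof: it works only with P and S geodesics that stay in $\Omega_1$, deduces from the agreement of the free lens relations that $\Omega_1\subset\tilde\Omega_1$, and concludes $\Gamma_1=\tilde\Gamma_1$ by swapping the roles of the two systems. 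The paper's route needs none of the reflection/transmission analysis at this stage; yours buys a more constructive description of $\Gamma_1$ at the cost of the sorting argument.

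Two gaps. First, you never determine the boundary jets. To arrange that the same Dirichlet data $f$ produces a purely P (or purely S) outgoing wave for both systems — and hence that the returning singularities can be compared at all — one needs $c_p=\tilde c_p$ and $c_s=\tilde c_s$ on $\bo$ beforehand, since the symbol \r{Ub} used for the mode separation depends on them. The paper obtains this from the boundary determination result of \cite{Rachele_2000} as its opening step; without it your comparison does not get started. Second, your argument is phrased as a reconstruction from $\Lambda$, but uniqueness requires comparing with the tilded system, and one must rule out that the corresponding tilded geodesic meets $\tilde\Gamma_1$ even though the untilded one misses $\Gamma_1$; otherwise the tilded solution acquires reflected singularities (of both polarizations, except possibly at isolated angles) and the Neumann data would differ, invalidating the identification of the lens relations. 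The paper devotes an explicit argument to exactly this point (alternatively detecting $\tilde\Gamma_1$ through singularities of the lens relation near tangent rays); your write-up needs the same step before the inclusion and identification of the interfaces is legitimate.
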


\begin{proof} 
In this and in the following proof, we consider another triple $\tilde \rho$, $\tilde \mu$, $\tilde \nu$ with the same $\Lambda$, and show that the corresponding quantities, in this case  $\Gamma_1$ and the three coefficients, coincide. Sometimes, we say that a certain quantity, for example $c_s$, is known or can be recovered in some region to indicate that $c_s=\tilde c_s$ there. 

First, by \cite{Rachele_2000}, we can recover the full jets of $\rho$, $c_p$ and $c_s$ on $\bo$. We will recover the speeds $c_s$ and $c_p$ first. 
This follows from \cite{SUV_localrigidity}, in any subdomain separated from $\Gamma_1$, i.e., for $-1+\eps\le \foliation\le0$, $\forall \eps\in(0,1)$, with $\foliation =\foliation_s$  or $\foliation =\foliation_p$, 
and it is also  H\"older stable there. Indeed, for every  unit P or S geodesic connecting boundary points and not intersecting $\Gamma_1$, we can construct a microlocal P or S solution in a small neighborhood of that geodesic, extended a bit outside $\Omega$ where the coefficients are extended smoothly as well. Let $f$ be the Dirichlet data of that solution on $\R_+\times\bo$.  Then the outgoing solution $\tilde u$ having the same Dirichlet data has the same Neumann data as well. Also, the solution will be a P or an S wave, respectively as well, since this property is determined by the trace of $c_p$ and $c_s$ on $\bo$, which we recovered, see the end of section~\ref{sec_ED1m}. 
 Therefore, singularities hitting $\bo$ from inside, will be the same (a singularity hitting $\bo$ must create singular Cauchy data by the analysis in Section~\ref{sec_Ac_Cauchy}). So the scattering relations related to $c_s$ and $c_p$ are the same as those of  $\tilde c_s$ and $\tilde c_p$ restricted to those geodesics. Note that this argument requires us to know that the corresponding geodesics for the second system do not hit $\tilde \Gamma_1$. If they do, we would get reflected waves of both kinds (with a possible  exception of specific angles which does not change the argument), and we would not get the same Cauchy data. Another way to exclude such rays is to note that they would create singularities of the lens relation near rays tangent to $\tilde \Gamma_1$. 

This proves that $\Omega_1\subset\tilde  \Omega_1$, i.e., $\tilde \Gamma_1$ is below $\Gamma_1$, and that $c_p=\tilde c_p$, $c_s=\tilde c_s$ in $\Omega_1$. On the other hand, we can swap $\tilde \Gamma_1$ and $\Gamma_1$ in this argument, therefore $\tilde \Gamma_1=\Gamma_1$. Then $c_s$ and $c_p$ are uniquely determined there. By \cite{Bhattacharyya_18}, one can recover $\rho$ in $\Omega_1$ as well under the stated condition, therefore then we can recover $\lambda$, $\mu$, too. 
\end{proof}

Note that here, and in what follows, we have precise control of $T$ which we do not make explicit. Also, local knowledge of $\Lambda$ up to a smoothing operator yields recovery in an appropriate domain of influence, see also \cite{SUV_elastic} for the case of smooth coefficients. 

\subsection{Recovery in the second layer $\Omega_2$}  
In the next lemmas, we show that we can recover the two speeds  in $\Omega_2$ under some conditions. The obstruction to the application of the method (but not necessarily to the uniqueness) is existence of totally reflected P and/or S rays on the interior side of $\Gamma_1$ for all times (or for long enough, for the case of data on a finite time interval). 
Since we need rays converging to tangential ones, the microlocal conditions can be described in terms of the sign of the jumps of the speeds at $\Gamma_1$.

In what follows, $c|_{\Gamma^\pm}$ denotes the limit of $c(x)$ as $x$ approaches $\Gamma$ from the exterior/interior. 

\begin{lemma}\label{lemma_G2}
Under the assumption in the first sentence of Lemma~\ref{lemma_G1}, assume additionally that 
\be{G3}
c_s|_{\Gamma_1^+}< c_s|_{\Gamma_1^-}. 
\ee
Then $\Gamma_2$ and $c_s$ are determined uniquely in (the uniquely determined) $\Omega_2$. 
\end{lemma}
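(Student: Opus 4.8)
The plan is to bootstrap from Lemma~\ref{lemma_G1}: we already know $\Gamma_1$, $c_s$, $c_p$ (and, under the extra pointwise condition, $\rho$, $\mu$, $\lambda$) throughout $\Omega_1$, and in particular we know the traces $c_s|_{\Gamma_1^+}$, $c_p|_{\Gamma_1^+}$ from the exterior. The key point is that the hypothesis \r{G3} guarantees that S~rays in $\Omega_2$ which approach $\Gamma_1$ at grazing incidence from the \emph{interior} side are not totally internally reflected: since the interior S~speed exceeds the exterior one, Snell's law \r{Snell} (applied to the S--S transmission in the first two rows of \r{RMC4.1a}, equivalently the $2\times2$ SH system \r{RMC4.2} and the SV block of \r{RMC_1}) shows that for incidence angles up to $\pi/2$ the transmitted S~ray into $\Omega_1$ always exists, with $\sin\theta_+^s/\sin\theta_-^s=c_{s,+}/c_{s,-}<1$; there is no critical angle. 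Hence every S~geodesic segment in $\Omega_2$ hitting $\Gamma_1$ produces a transmitted S~wave in $\Omega_1$ which, by Lemma~\ref{lemma_G1}, propagates along a \emph{known} geodesic of $c_s$ in $\Omega_1$ and exits through $\bo$ with a singularity visible in the Cauchy data, hence in $\Lambda$.

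The next step is to extract, from $\Lambda$, the S~lens relation (scattering relation) of $c_s$ restricted to $\Omega_2$, i.e.\ for S~geodesics that enter $\Omega_2$, travel inside it, and come back. Given a boundary covector in the hyperbolic region, send in an incoming S~wave (using the potential $w^s$ with a single conormal singularity and the appropriate SV/SH polarization, as in the proof of Lemma~\ref{lemma_G1} and the $U_{\text{in}},U_{\text{out}}$ ellipticity of Section~\ref{sec_ED1}); trace the transmitted S~branch across $\Gamma_1$ into $\Omega_2$; it travels along an S~geodesic there, and by the convex foliation hypothesis (which gives non-trapping, as recalled after the statement) it returns to $\Gamma_1$, transmits back into $\Omega_1$ (again no total reflection, by \r{G3}), and reaches $\bo$. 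All the pieces of this trajectory in $\Omega_1$ — the two transmission points on $\Gamma_1$, the travel times, the directions — are computable from the already-recovered data in $\Omega_1$ together with Snell's law at $\Gamma_1$, while the missing data (entry/exit points and directions on the interior side of $\Gamma_1$, plus the in-between travel time) is exactly the S~lens relation of $c_s$ in $\Omega_2$ localized to $\Gamma_1$. The one subtlety — and here one argues exactly as in Lemma~\ref{lemma_G1} — is that for a competing triple with interface $\tilde\Gamma_1$ we must know that the corresponding interior S~rays do not hit $\tilde\Gamma_1$ prematurely; but $\tilde\Gamma_1=\Gamma_1$ was already established, and rays hitting $\tilde\Gamma_2$ prematurely would produce extra reflected branches (save for a discrete set of angles, which does not affect the argument) or, alternatively, singularities of the lens relation near rays tangent to $\tilde\Gamma_2$, so this scenario is excluded and simultaneously identifies $\tilde\Gamma_2$ below $\Gamma_2$; swapping the roles gives $\tilde\Gamma_2=\Gamma_2$.

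Finally, having the S~lens relation of $c_s$ in $\Omega_2$ relative to $\Gamma_1$, we invoke the local rigidity / local recovery result of \cite{SUV_localrigidity}: under the strictly convex foliation condition for $c_s$, the localized travel-time (lens) data determines $c_s$ layer by layer, in any subdomain $-2+\eps\le\foliation_s\le -1$ for each $\eps\in(0,1)$, and hence on all of $\Omega_2$, with conditional H\"older stability. Combined with the identification $\tilde\Gamma_2=\Gamma_2$, this proves that $\Gamma_2$ and $c_s$ are uniquely determined in $\Omega_2$. I expect the main obstacle to be the bookkeeping of the transmitted-wave branches and polarizations across $\Gamma_1$ — making sure the S~branch can be cleanly isolated in the Cauchy data (it can, because the SH block decouples and the SV block is elliptic, and because mode conversion to P only adds P~branches travelling along the already-known $c_p$ geodesics, hence distinguishable by arrival pattern) — together with the careful exclusion, via lens-relation singularities, of the premature-$\tilde\Gamma_2$-hitting scenario; the analytic heart, namely reconstructing $c_s$ from the lens relation, is outsourced wholesale to \cite{SUV_localrigidity}.
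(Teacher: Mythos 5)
Your overall route is the paper's: use \r{G3} to rule out total internal reflection of S waves at $\Gamma_1$ from below, read off the S lens relation of $\Omega_2$ relative to $\Gamma_1$ from singularities that transmit back into $\Omega_1$ and reach $\bo$, recover the jet of $c_s$ at $\Gamma_1^-$ by letting the exit point tend to the entry point, and then invoke \cite{SUV_localrigidity}; the identification of $\tilde\Gamma_2=\Gamma_2$ via lens-relation singularities is also as in the paper. However, there is one genuine gap in your bookkeeping of mode conversion, and it is precisely the point the paper's proof is built around. You claim that mode conversion to P ``only adds P branches travelling along the already-known $c_p$ geodesics, hence distinguishable by arrival pattern.'' This is false for the branch that matters: an SV wave transmitting from $\Omega_1$ into $\Omega_2$ generically produces a mode-converted \emph{transmitted P wave inside $\Omega_2$}, and $c_p$ in $\Omega_2$ is exactly what you do not yet know. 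Worse, since $c_p>c_s$, that P branch returns to $\Gamma_1$ \emph{before} the S branch does, so the recipe ``identify $y$ as the first point where a singularity comes back to $\Gamma_1$'' picks out the wrong point, and you cannot subtract the P arrival off because you cannot predict where it lands.

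The paper's fix is to choose the incident wave so that its trace on $\Gamma_1$ near the entry point $x$ is an \emph{SH} wave (arranged by time reversal from a prescribed SH trace, using the ellipticity of the Cauchy problem in $\Omega_1$). Since SH waves do not mode convert on the principal symbol level --- they satisfy the decoupled $2\times2$ system \r{RMC4.2} and behave acoustically in all of the (HH), (HM), (MH), (MM) configurations --- the only leading-order singularity below $\Gamma_1$ is the single S geodesic $\gamma_0$, and the first-return identification of $y$ and of the travel time is unambiguous. (At the exit point $y$ the wave need not be SH, but there \r{G3} guarantees at least one hyperbolic transmitted S branch with nonvanishing principal amplitude outside a codimension-one set of angles, which is all that is needed.) You gesture at ``appropriate SV/SH polarization'' but never commit to the SH choice, and your fallback argument for disentangling the P branch does not work. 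A secondary point you leave unaddressed: Lemma~\ref{lemma_G1} under only its first-sentence hypothesis does \emph{not} give you $\rho$ in $\Omega_1$, so the transmission coefficients at $\Gamma_1$ are not fully known; the paper handles this by noting (via Rachele's observation, Remark~\ref{rem_Rachele}, and the structure of \r{RMC4.2}) that the \emph{directions} and the nonvanishing of the principal amplitudes are independent of $\rho_-$, which is all the argument uses.
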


 We can interpret \r{G3} as strict convexity of  $\Gamma_1$ w.r.t.\   $c_s$ with a jump since increasing the speeds with depth guarantees strict convexity of the level surfaces. It guarantees no total full reflection of S ways from $\Omega_2$ to $\Omega_1$. On the other hand, \r{G3} implies
\be{G5}
c_s|_{\Gamma_1^+}<c_s|_{\Gamma_1^-}  <   c_p|_{\Gamma_1^-} 
\ee
but the only thing we know about  $c_p|_{\Gamma_1^+}$ is that it  is greater than  $c_s|_{\Gamma_1^+}$. In particular, there could be evanescent S to P or P to S transmission from $\Omega_2$ to $\Omega_1$; or they all could be hyperbolic.

 \begin{figure}[!ht]
\includegraphics[page=9,scale=1]{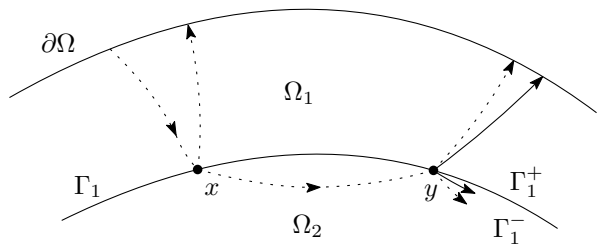}
\caption{Solid curves are P waves. Dotted curves are S waves. We can create an  SH wave  connecting points on $\Gamma_1$ and no other waves from or to $x$ below $\Gamma_1$ by choosing it to be SH on $\Gamma_1$ near $x$. The reflected and/or the transmitted P waves at $x$ and $y$ could be evanescent.
}\label{pic_IP1}
\end{figure}
 
\begin{proof}[Proof of Lemma~\ref{lemma_G2}] 
Let $x$, $y$ be on $\Gamma_1$ connected by a unit speed S geodesic $\gamma_0$ staying between $\Gamma_1$ and $\Gamma_2$. 
We take an outgoing  microlocal solution $u$ concentrated near $\gamma_0$, so that $u$ is singular near $x$ when $t$ is near $t_1$; and $t=t_2$ corresponds to $y$. 
We choose $u$ to be an SH wave on $\Gamma_1$ near  $x\in \Gamma_1$, see Figure~\ref{pic_IP1}.  
The SH waves behave as acoustic ones on both sides in the (HH), (MH), (HM) and the (MM) cases on principal symbol level, and all those cases are possible. Recall that our convention is to list the top  first; in particular, the (MH) configuration is  the (HM) one in Section~\ref{sec_hm} with the top and the bottom swapped.  To create such a wave, we just need to take an S wave coming from $\bo$ so that its trace on $\Gamma_1$ is SH; this can be done by time reversal. On principal level, there will be no other singularities below $\Gamma_1$ until that wave hits $\Gamma_1$ again. 
Then that solution will create singular Cauchy data near $y$ and $t$ near $t_2$. It is an S wave but not necessarily  an SH one at $y$.  At least one of the two waves transmitted back to $\Omega_1$ would have non-zero principal amplitude if there are two hyperbolic ones, or if there is an S one only, it would be non-zero by the results of the previous section. Then there will be at least one singularity hitting $\bo$ (which we allow to leave $\Omega$, as above). On the other hand, there might be other waves hitting $\bo$ at the same place and time coming from waves at $y$ below $\Gamma_1$ which can reflect of refract. 
Since we allow all those waves to leave $\Omega$ freely, they would have different wave front sets or polarizations, and in particular they cannot cancel or alter the singularity of the Cauchy data generated by  the waves coming directly from $y$. The simplest way to see that is to do time reversal from the exterior of $\Omega$ back to $\Omega$. 

The speeds $c_s$ and $c_p$ are the same for both systems in $\Omega_1$ by Lemma~\ref{lemma_G1}. 
We can assume $t_1\gg1$ so that $u$ is smooth on $\bo$ for $t<\eps$ for some $\eps>0$. 
Since the solutions constructed above for both systems have the same Cauchy data on $(0,T)\times \bo$ and we can choose $T\gg1$, we conclude that the principal part of $u$ on $\Gamma_1$ near $t=t_1$ is uniquely reconstructed.  Note that this argument does not require recovery of $\rho$ in $\Omega_1$ since we only need the principal amplitudes and  by \cite{Rachele_2000}, they do not depend on $\rho$. 
There might be other singularities on $\Gamma_1$ but we can identify $y$ as the first point a singularity comes back to $\Gamma_1$, and we can determine the travel time through $\Omega_2$ as well. 
Taking $y\to x$, we can recover the full jet of $c_s$ on $\Gamma_1^+$   by \cite[Lemma~2.1]{SUV_localrigidity}. 
Since we now know the S metric on $\Gamma_1^-$, this is enough to recover the lens relation related to $c_s$  on $\Gamma_1^-$, restricted to rays  not hitting $\Gamma_2$.  By \cite{SUV_localrigidity}, this determines $c_s$ in $\Omega_2$ uniquely, i.e., $c_s= \tilde c_s$ in $\Omega_2\cap\tilde\Omega_2$. 

We remark that the magnitudes of the refracted SH waves into $\Omega_2$ at $x$ may vary for each of the two systems since we do not know $\rho_-:=\rho|_{\Gamma_1^-}$; see \r{RMC4.2} where we can write $\mu_\pm = \rho_\pm c_s^2$. Their directions however do not depend on $\rho_-$ and each one can vanish only for a specific incidence angle (a priori different for each system). 

Finally, $\Gamma_2=\tilde\Gamma_2$ since the presence of the interface $\Gamma_2$ would create a singularity of the lens relation of the reflected S wave (plus a possible P wave); which would be detected om $\Gamma_1$. 
\end{proof}

\begin{lemma}\label{lemma_G3}
Under the assumptions of Lemma~\ref{lemma_G2}, assume in addition  
\be{G6}
c_p|_{\Gamma_1^+}< c_p|_{\Gamma_1^-} . 
\ee
Then $c_p$ is uniquely determined in $\Omega_2$. 
\end{lemma}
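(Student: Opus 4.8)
The plan is to follow the proof of Lemma~\ref{lemma_G2} almost verbatim, replacing $c_s$-geodesics and SH waves by $c_p$-geodesics and P waves. As there, I take a second triple $\tilde\rho,\tilde\mu,\tilde\nu$ producing the same $\Lambda$ and show $c_p=\tilde c_p$ in $\Omega_2$. By Lemma~\ref{lemma_G1}, $c_s$, $c_p$ and $\Gamma_1$ are already known in (up to) $\Omega_1$; by Lemma~\ref{lemma_G2}, $c_s$ is known in $\Omega_2$ and $\Gamma_2=\tilde\Gamma_2$, hence $\Omega_2=\tilde\Omega_2$. Write $c_{p,\pm}:=c_p|_{\Gamma_1^\pm}$ and $c_{s,\pm}:=c_s|_{\Gamma_1^\pm}$. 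The new hypothesis \r{G6}, together with the convex foliation condition on $\foliation_p$, amounts to strict convexity of $\Gamma_1$ with respect to $c_p$ (with a jump): for each $x\in\Gamma_1$ there are $c_p$-geodesics in $\Omega_2$ issued from $x$ that return to $\Gamma_1$ at points $y$ arbitrarily close to $x$, staying strictly between $\Gamma_1$ and $\Gamma_2$ (so not reaching $\Gamma_2$); and, by Snell's law \r{Snell} and \r{G6} ($c_{p,-}>c_{p,+}$, fast-to-slow, no critical angle), every P ray in $\Omega_2$ meeting $\Gamma_1$ from the interior transmits a P ray into $\Omega_1$. The goal is to read off from $\Lambda$ the lens relation of the $c_p$-flow in $\Omega_2$ restricted to geodesics not meeting $\Gamma_2$, with endpoints and directions on $\Gamma_1^-$, and then to quote \cite{SUV_localrigidity}.

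First I would fix $x,y\in\Gamma_1$ joined by such a $c_p$-geodesic $\gamma_0\subset\Omega_2$ and build a solution $u$ whose part below $\Gamma_1$ is, on the principal level, a single P wave along $\gamma_0$, singular near $x$ at some $t_1\gg1$ and near $y$ at $t_2$. The trace on $\Gamma_1$ of this transmitted P wave lies in the hyperbolic region for $c_{p,-}$, hence---because $c_{s,+}<c_{s,-}<c_{p,-}$ (using \r{G3}) and, by \r{G6}, $c_{p,+}<c_{p,-}$---in the hyperbolic region for all four speeds, so the (HH) configuration of Section~\ref{sec_Tr_1} is in force at $\Gamma_1$. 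Prescribing the transmitted P amplitude along $\gamma_0$, setting the transmitted S and all SH amplitudes to zero, I solve the $4\times4$ system \r{RMC_1} for the incoming and reflected waves on the $\Omega_1$ side; this is possible since $(A_\textrm{in}^+,A_\textrm{out}^+)$ is elliptic (Lemma~\ref{lemma_A1}). Tracing the resulting incoming waves backwards through $\Omega_1$ to $\bo$---possible because $c_s,c_p$ are known there and the foliation condition gives non-trapping in $\Omega_1$---and taking their Dirichlet traces as the data $f$ in \r{1} gives the desired outgoing $u$. To get $\gamma_0$ near-tangential (needed for the limit $y\to x$) one chooses the incident angle at $x$ just below $\arcsin(c_{p,+}/c_{p,-})$, which is a fixed transverse angle on the $\Omega_1$ side, so no glancing arises at $x$. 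As in Lemma~\ref{lemma_G2}, this construction uses only polarizations and geometry, so $\rho$ is not needed (Remark~\ref{rem_Rachele}, \cite{Rachele03}).

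Next I would detect the return at $y$. Travelling along $\gamma_0$, the P wave hits $\Gamma_1$ first (not $\Gamma_2$) at $y$, again in the (HH) regime; by Lemma~\ref{lemma_A1} the reflection/transmission system there is elliptic, so the incident P wave forces at least one of the two waves transmitted into $\Omega_1$ to have non-zero principal amplitude (if both vanished, ellipticity of $(A_\textrm{in}^-,A_\textrm{out}^-)$ would kill the incident amplitude). That wave runs through $\Omega_1$ to $\bo$ and produces a singular boundary trace there; as in the proof of Lemma~\ref{lemma_G2}, any other singularity arriving at $\bo$ at the same point and time carries a different wave front set or polarization (seen by time-reversal from the exterior) and cannot cancel it. Since the tilded coefficients give the same $\Lambda$, hence the same Cauchy data on $(0,T)\times\bo$ for $T\gg1$, and propagation through $\Omega_1$ is common to both systems, the transmission threshold at $x$, the return point $y$, the $\Omega_2$-travel time $t_2-t_1$, and the incoming/outgoing directions at $x,y$ on $\Gamma_1^-$ are all uniquely determined; this is precisely the (partial) lens relation of the $c_p$-flow in $\Omega_2$. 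Letting $y\to x$ and applying \cite[Lemma~2.1]{SUV_localrigidity} recovers the full jet of $c_p$ on $\Gamma_1^-$; combined with this lens relation and the strict convexity of the $\foliation_p$-foliation, \cite{SUV_localrigidity} then yields $c_p=\tilde c_p$ in $\Omega_2$.

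The hard part, exactly as in Lemma~\ref{lemma_G2}, is controlling mode conversion at the return point $y$: one must guarantee that the first return of the P singularity to $\Gamma_1$ is visible at $\bo$ although the incident P wave splits into up to four waves there, and although the transmitted P amplitude may vanish on a discrete set of incidence angles (harmless---the transmitted S wave then carries the singularity, or one argues by continuity in $y$), while simultaneously ruling out cancellation against the unrelated singularities at $\bo$ coming from the reflected waves at $x$ and $y$. A secondary technical nuisance is that near-tangential transmitted P rays in $\Omega_2$ have traces close to the $c_{p,-}$-glancing set of $\Gamma_1$; this is not glancing---the rays stay transverse to $\Gamma_1$ on both sides---so the geometric-optics constructions of Sections~\ref{sec_el_bvp}--\ref{sec_trans} apply for each $y\neq x$, with \cite[Lemma~2.1]{SUV_localrigidity} handling the limit $y\to x$.
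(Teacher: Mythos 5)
Your proposal follows essentially the same route as the paper: the control argument of Section~\ref{sec_HH} at $x$ to produce a single transmitted P wave in $\Omega_2$ with no transmitted S or SH component, ellipticity at the return point $y$ to guarantee a nontrivial transmitted singularity into $\Omega_1$, time reversal through the already-known $\Omega_1$ to read off the return point and travel time, the limit $y\to x$ with \cite[Lemma~2.1]{SUV_localrigidity} for the jet on $\Gamma_1^-$, and then \cite{SUV_localrigidity} for the interior. The only substantive difference in mechanics is at $y$: the paper applies the control argument a second time to cancel the reflected rays in $\Omega_2$ and then invokes energy preservation to see that the four waves above $y$ cannot all vanish, whereas you argue directly that if both transmitted waves vanished, ellipticity of the Cauchy problem on the $\Omega_2$ side would force the incident amplitude to vanish; both are fine.

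There is one point the paper addresses explicitly that your sketch glosses over, and it is the genuinely delicate step. The cancellation of the transmitted S wave at $x$ is arranged by solving \r{RMC_1} with the \emph{untilded} transmission coefficients, which involve $\xi_{3,-}^p$ and hence the unknown $\tilde c_p|_{\Gamma_1^-}$; for the tilded system the same boundary sources therefore may launch a nontrivial S wave into $\tilde\Omega_2$ from $x$. One must then rule out that the singularity detected at $(y,t_2)$ is carried, for the tilded system, by such an S wave rather than by its P wave, since otherwise the quantity you read off is not the $\tilde c_p$ travel time. The paper does this by noting that attributing the P-polarized arrival at $y$ to an S wave would require an intermediate reflection and mode conversion at $\Gamma_1$, producing an observable singularity at some time in $(t_1,t_2)$ which is absent from the (common) data; it also uses the fact, from the paragraph following Remark~\ref{rem_Rachele} and the already-established equality $c_s=\tilde c_s$ in $\Omega_2$, that the tilded system must carry a nontrivial P wave near $x$ and $y$. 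Your blanket remark that other arrivals ``carry a different wave front set or polarization'' does not by itself dispose of this scenario, so you should add this disambiguation step.
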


\begin{remark}
Conditions \r{G5} and \r{G6} say that there is no  total internal reflection of $P\to P$ and $S\to S$ rays from $\Omega_2$ to $\Omega_1$. On can still have evanescent transmitted $S\to P$ waves from the interior. More precisely, we have the following two generic cases (excluding $c_s|_{\Gamma_1^-}  =  c_p|_{\Gamma_1^+}$): 
\be{G8}
c_s|_{\Gamma_1^+}<c_s|_{\Gamma_1^-}  <  c_p|_{\Gamma_1^+} < c_p|_{\Gamma_1^-}, 
\ee
and
\be{G9}
c_s|_{\Gamma_1^+} <  c_p|_{\Gamma_1^+} <c_s|_{\Gamma_1^-} < c_p|_{\Gamma_1^-}, %
\ee
see also \r{G5}. Evanescent $S\to P$ transmission from the interior happens when \r{G8} holds. This is not a problem for the proof since we recovered $c_s$ in $\Omega_2$ using $SH$ waves. On the other hand, \r{G9} implies that all rays from the interior create transmitted rays, i.e., the wave front on $\Gamma_1^+$ is in the hyperbolic region.

\end{remark}

\begin{proof}[Proof of Lemma~\ref{lemma_G3}] 
We want  to use $P$ rays in $\Omega_2$ not hitting  $\Gamma_2$, in particular  having Cauchy data on $\Gamma_1^-$ in the hyperbolic region $c_p|_{\Gamma_1^-}|\xi'|<|\tau|$. By \r{G6}, that Cauchy data will fall in the hyperbolic region on $\Gamma_1^+$; in other words, we have  the (HH) case.  We use the control argument in \cite{caday2019recovery} now. 
In the (HH) case, near $x$ and $t=t_1$, we can create an  outgoing P wave in $\Omega_2$ with no other S or P waves there; in other words, in Figure~\ref{pic2}, only $P^-_\text{out}\not=0$ among all waves on the bottom. Then we extend the four waves on the top until they leave $\Omega$. At $y$, where that wave hits $\Gamma_1$ again near $t=t_2$, we can apply the same argument to make sure that there are no reflected rays in $\Omega_2$, see Figure~\ref{pic_IP2}. 
 \begin{figure}[!ht]
\includegraphics[page=15,scale=1]{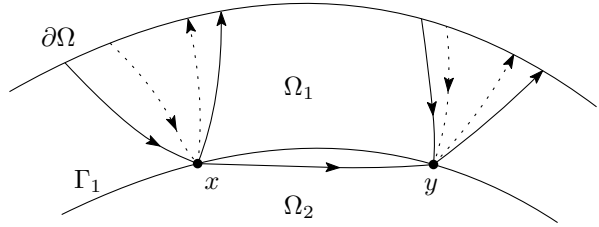}
\caption{Solid curves are P waves. Dotted curves are S waves. We can create a P wave  connecting points on $\Gamma_1$ and no other waves from or to $x$ below $\Gamma_1$ by choosing carefully the sources on the  top. At $y$, we can make sure that there are no reflected waves by choosing the sources on the top as well.  
}\label{pic_IP2}
\end{figure}
By energy preservation, we cannot have zero principal amplitudes of all four rays above $y$. Then by time reversal from $\bo$, we would know that there is a singularity on $\Gamma_1$ at $y$ and $t=t_2$, and we would know its wave front set. Note that we do not require knowledge of $\rho$ in $\Omega_1$ and $\Omega_2$. In principle, the second (tilded) system may have an S wave starting from $x$ at $t=t_1$. By the paragraph following Remark~\ref{rem_Rachele}, we must have a non-trivial P wave near $x$ and $y$ (since we have recovered $c_s$ already). The P wave arriving at $y$ at $t=t_2$ might a priori be due to an S wave from $x$ in $\Omega_2$ which has reflected at $\Gamma_1$ and mode converted by this is not possible because this would have created a singularity at a moment in the interval $(t_1,t_2)$ but we know that such singularity does not exist for either system. Therefore, this recovers the P travel time from $x$ to $y$. 
Then we recover $c_p$ in $\Omega_2$ as in the proof of the previous lemma. 
\end{proof}

Combining those two lemmas, we get the following.

\begin{theorem}\label{thm_ip} 
Assume we have two triples of coefficients  $ \rho$, $ \mu$, $ \nu$ and  $\tilde \rho$, $\tilde \mu$, $\tilde \nu$; and  $\Lambda=\tilde\Lambda$ with $T\gg1$. Assume the foliation condition and  \r{G3} and \r{G6} for each one of them.  
Then  $\Gamma_1=\tilde \Gamma_1$, $\Gamma_2=\tilde \Gamma_2$ and $c_s=\tilde c_s$,  $c_p=\tilde c_p$ in $\Omega_1\cup \Omega_2=\tilde \Omega_1\cup\tilde\Omega_2$. Also,  if $c_p\not=2c_s$ in $\Omega$, then $\rho=\tilde\rho$ in $\Omega_1$. 
\end{theorem}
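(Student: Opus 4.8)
The plan is to obtain the statement by concatenating Lemmas~\ref{lemma_G1}, \ref{lemma_G2} and \ref{lemma_G3}, checking only that the hypotheses and the identified domains propagate correctly from one layer to the next. Fix the two triples $\rho,\mu,\nu$ and $\tilde\rho,\tilde\mu,\tilde\nu$ with $\Lambda=\tilde\Lambda$ for $T\gg1$, both satisfying the convex foliation condition of the first sentence of Lemma~\ref{lemma_G1} and both satisfying \r{G3} and \r{G6}.

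First I would apply Lemma~\ref{lemma_G1}: since $\Lambda=\tilde\Lambda$ and the foliation condition holds, we get $\Gamma_1=\tilde\Gamma_1$ and $c_s=\tilde c_s$, $c_p=\tilde c_p$ throughout $\Omega_1=\tilde\Omega_1$; and when in addition $c_p\ne 2c_s$ pointwise in $\Omega$, hence in $\Omega_1$, the same lemma yields $\rho=\tilde\rho$ (and then $\mu=\tilde\mu$, $\lambda=\tilde\lambda$) in $\Omega_1$. This settles the first layer and the last sentence of the theorem. Next, with $\Gamma_1=\tilde\Gamma_1$ established and the $S$- and $P$-metrics known in $\Omega_1$, the hypothesis of Lemma~\ref{lemma_G2} is exactly the convex foliation condition together with \r{G3}, so that lemma gives $\Gamma_2=\tilde\Gamma_2$ and $c_s=\tilde c_s$ in $\Omega_2=\tilde\Omega_2$. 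Finally, having also $c_s=\tilde c_s$ in $\Omega_2$ and $\Gamma_2=\tilde\Gamma_2$, Lemma~\ref{lemma_G3}, whose additional hypothesis is \r{G6}, gives $c_p=\tilde c_p$ in $\Omega_2$. Putting these together, $\Gamma_j=\tilde\Gamma_j$ for $j=1,2$ and $c_s=\tilde c_s$, $c_p=\tilde c_p$ on $\Omega_1\cup\Omega_2=\tilde\Omega_1\cup\tilde\Omega_2$, which is the assertion.

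The one point that requires care beyond quoting the lemmas — and what I expect to be the main obstacle — is the compatibility of the domains and the fact that \r{G3} and \r{G6} must be imposed on \emph{both} triples. Lemma~\ref{lemma_G2} recovers $c_s$ in $\Omega_2\cap\tilde\Omega_2$ by tracing $SH$ rays issued nearly tangentially to $\Gamma_1$ and comparing travel times across $\Omega_2$; for this one needs that neither geometry has $S$ rays totally internally reflected from $\Omega_2$ to $\Omega_1$ forever, which is guaranteed for each by \r{G3} (equivalently, strict convexity of $\Gamma_1$ seen from the interior w.r.t.\ $c_s$), and the analogous statement for $P$ rays is provided by \r{G6} in Lemma~\ref{lemma_G3}. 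I would also stress, as in the proofs of the lemmas, that no knowledge of $\rho$ in $\Omega_1$ or $\Omega_2$ is needed for the speed recovery in $\Omega_2$: the relevant arguments use only the directions and the orders of the leading singularities, which by Remark~\ref{rem_Rachele} depend on $c_s$, $c_p$ alone. Hence the $\Gamma_2$, $c_s$, $c_p$ conclusions in $\Omega_2$ carry no extra density hypothesis, and the condition $c_p\ne 2c_s$ enters only for the density statement in $\Omega_1$.
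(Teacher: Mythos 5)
Your proposal is correct and coincides with the paper's own argument: the theorem is obtained precisely by concatenating Lemmas~\ref{lemma_G1}, \ref{lemma_G2} and \ref{lemma_G3}, with the foliation condition feeding the first, \r{G3} the second, and \r{G6} the third, and the paper states no more than "combining those two lemmas." Your additional remarks on imposing \r{G3} and \r{G6} on both triples and on the $\rho$-independence of the leading amplitudes are consistent with the proofs of those lemmas.
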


\subsection{Recovery of the speeds in the third, etc., layers} 
This construction can be extended by induction under appropriate conditions:

\begin{theorem}\label{thm_ip2} 
Assume we have two triples of coefficients  $ \rho$, $ \mu$, $ \nu$ and  $\tilde \rho$, $\tilde \mu$, $\tilde \nu$. Let 
\be{G3'}
c_s|_{\Gamma_j^+}< c_s|_{\Gamma_j^-}, \quad c_p|_{\Gamma_j^+}< c_p|_{\Gamma_j^-} ,  \quad   j=1,\dots,k.
\ee
Assume the foliation condition in   $\Omega_1\cup\dots\cup\Omega_k$.  
 If    $\Lambda=\tilde\Lambda$ with $T\gg1$, then   $\Gamma_j=\tilde \Gamma_j$, $j=1,\dots,k$ and $c_s=\tilde c_s$,  $c_p=\tilde c_p$ in $\Omega_1\cup\dots\cup\Omega_k$.
\end{theorem}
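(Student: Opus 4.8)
The plan is to argue by induction on the layer index $j$, with Lemma~\ref{lemma_G1} (recovery of $\Gamma_1$, $c_s$ and $c_p$ in $\Omega_1$) as the base case and an inductive step that generalizes the proofs of Lemmas~\ref{lemma_G2} and~\ref{lemma_G3}. So assume we already know $\Gamma_i=\tilde\Gamma_i$ for $i\le j$ and $c_s=\tilde c_s$, $c_p=\tilde c_p$ on $\Omega_1\cup\dots\cup\Omega_j$, and aim to push one layer deeper. The first point is geometric: by \r{G3'} both speeds increase monotonically with depth across every interface, so a ray which is hyperbolic in $\Omega_{j+1}$, with respect to either $c_s$ or $c_p$, stays hyperbolic --- for that mode and for the mode converted one --- at each interface $\Gamma_1,\dots,\Gamma_j$ it crosses on its way back up to $\bo$, since $|\xi'|<|\tau|/c$ in a faster layer forces the same inequality in a slower one. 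Hence at every such interface we are in the (HH) case of Section~\ref{sec_HH}, and the well-posedness and control statements proved there, together with Lemma~\ref{lemma_A1}, apply; the still-unknown deeper layers $\Omega_{j+2},\dots$ never enter because we only probe rays confined to $\Omega_{j+1}$.

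To recover $c_s$ in $\Omega_{j+1}$ I would fix $x,y\in\Gamma_j$ joined by a unit-speed $c_s$-geodesic $\gamma_0$ of $\Omega_{j+1}$ which stays in $\Omega_{j+1}$ (such geodesics exist by the convex foliation hypothesis, which also gives non-trapping). Applying the control argument inductively through $\Gamma_1,\dots,\Gamma_j$ --- legitimate now that the geometry above $\Gamma_j$ is known exactly for both systems --- I would choose Dirichlet data $f$ on $\R_+\times\bo$, vanishing for small $t$, so that the outgoing solution $u$ is, near $x$ and $t=t_1\gg1$, a single microlocal SH wave on $\Gamma_j$ (with respect to the local SV-SH splitting there), with no other singularities below $\Gamma_j$ at those times. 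Since SH waves do not mode convert, $u$ continues into $\Omega_{j+1}$ as one SH wave along $\gamma_0$ and produces singular Cauchy data near $y$ at $t=t_2$, equal to $t_1$ plus the $c_s$-travel time along $\gamma_0$; at $y$ it is an S wave, not necessarily SH. By Sections~\ref{sec_ref}--\ref{sec_trans}, except for a discrete set of incidence angles (handled by continuity) at least one S wave transmitted back into $\Omega_j$ has nonzero principal amplitude; it stays hyperbolic all the way up, hence produces a singularity on $\bo$ detected by $\Lambda$. Since $\Lambda=\tilde\Lambda$ and the structure on $\Omega_1\cup\dots\cup\Omega_j$ is identical for the two systems --- and, by \cite{Rachele_2000}, the relevant principal amplitudes do not depend on $\rho$ --- the return time $t_2$ and the return point $y$, identified as the first return of a singularity to $\Gamma_j$, are reconstructed, giving the $c_s$-travel time from $x$ to $y$ through $\Omega_{j+1}$. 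Letting $y\to x$ gives the full jet of $c_s$ on $\Gamma_j^-$ by \cite[Lemma~2.1]{SUV_localrigidity}, and since the $c_s$-metric on $\Gamma_j^-$ is then known this determines the $c_s$-lens relation of $\Omega_{j+1}$ for rays missing $\Gamma_{j+1}$; thus $c_s=\tilde c_s$ in $\Omega_{j+1}$ by \cite{SUV_localrigidity}.

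The recovery of $c_p$ in $\Omega_{j+1}$ mirrors the proof of Lemma~\ref{lemma_G3}: one works with $c_p$-geodesics of $\Omega_{j+1}$ not meeting $\Gamma_{j+1}$, for which \r{G3'} again puts us in the (HH) case at every interface above. Using the control argument of \cite{caday2019recovery} one creates, near $x$ and $t=t_1$, a solution that is a single outgoing P wave in $\Omega_{j+1}$ and nothing else there, extends the remaining waves upward until they leave $\Omega$, and at $y$ arranges that nothing reflects back into $\Omega_{j+1}$; by the energy identity \r{RMC_en2} the four waves above $y$ cannot all vanish, so a detectable singularity reaches $\bo$. Having already recovered $c_s$ in $\Omega_{j+1}$, one excludes the alternative that the P wave arriving at $y$ comes from an S wave in $\Omega_{j+1}$ which reflects and mode converts at $\Gamma_j$, since that would force an intermediate singularity on $\Gamma_j$ in $(t_1,t_2)$, which does not occur for either system. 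This reconstructs the $c_p$-travel time from $x$ to $y$, hence $c_p=\tilde c_p$ in $\Omega_{j+1}$ as before; and $\Gamma_{j+1}=\tilde\Gamma_{j+1}$ follows because an interface there would create reflected S (and possibly P) waves whose lens relation is singular near the rays tangent to $\Gamma_{j+1}$, and that singularity would be visible on $\Gamma_j$, hence on $\bo$. Swapping the roles of the two systems shows that the interfaces, and the domains $\Omega_{j+1}=\tilde\Omega_{j+1}$, coincide, which closes the induction.

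I expect the main obstacle to be the control step: producing, from boundary data on $\bo$, an outgoing solution whose only singularity arriving near a prescribed point of $\Gamma_j$ at a prescribed time is a single clean SH (or P) wave, despite the cascade of reflections and mode conversions at $\Gamma_1,\dots,\Gamma_{j-1}$. Making this precise requires chaining the single-interface (HH) control results of Section~\ref{sec_HH} through the already-reconstructed layers and carefully tracking which branch is the relevant first arrival; the finitely many incidence angles where a principal amplitude vanishes (dealt with by continuity), and the verification that no mode-converted evanescent branch, nor a totally internally reflected ray from a deeper still-unknown layer, can spoil the identification, are the remaining delicate points --- the latter being avoided by construction since only rays confined to $\Omega_{j+1}$ occur.
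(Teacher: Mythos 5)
Your overall strategy --- induction on the layer, SH waves to recover $c_s$ in the new layer, a controlled single P wave to recover $c_p$, detection of $\Gamma_{j+1}$ through singularities of the lens relation, and the swap of the two systems to identify the interfaces --- is exactly the paper's. But there is a genuine error in the geometric step on which everything else rests: the claim that \r{G3'} forces the (HH) case at every interface crossed on the way up. The monotonicity $c|_{\Gamma_i^+}<c|_{\Gamma_i^-}$ does guarantee that an upgoing S (resp.\ P) ray remains hyperbolic \emph{for its own mode}, since $c_s|_{\Gamma_i^+}|\xi'|<c_s|_{\Gamma_i^-}|\xi'|<|\tau|$. It does \emph{not} control the mode-converted branch: for an S wave hyperbolic in the lower layer one needs $c_p|_{\Gamma_i^+}|\xi'|<|\tau|$ for the transmitted P wave to be hyperbolic, and \r{G3'} says nothing about how $c_p|_{\Gamma_i^+}$ compares with $c_s|_{\Gamma_i^-}$. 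The paper's inequality \r{G8}, $c_s|_{\Gamma_1^-}<c_p|_{\Gamma_1^+}$, is an explicitly allowed scenario in which the $S\to P$ transmission from the interior is evanescent, i.e.\ the trace lands in the mixed region on the upper side. Moreover, even for a single upgoing P ray, the S branches it spawns at one interface arrive at the next interface with a different tangential frequency (the interfaces are curved and the rays bend), so those branches can also fall into the (HM) or (MM) regions higher up. This is why the paper's proof of Theorem~\ref{thm_ip2} explicitly states that the traces on $\Gamma_2$ and on the upper interfaces lie in ``the (HH), (HM) or the (MM) region'' and invokes the control results of Sections~\ref{sec_hm} and~\ref{sec_t_mm} (SH control as in the acoustic case, SV control via the nonvanishing of the relevant $4\times4$ determinant in the mixed-mixed case), not just Section~\ref{sec_HH}.

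As written, your proof only chains the (HH) control statement through $\Gamma_1,\dots,\Gamma_j$, so it breaks down at any interface where a P mode is evanescent: you cannot ``cancel the reflected wave'' there by the (HH) argument, and you have not ruled out that such interfaces occur --- under \r{G3'} alone they generically do. The fix is exactly the paper's: decompose the upgoing S waves into SH and SV components at each interface, treat the SH part acoustically, and use the (HM)/(MM) control lemmas for the SV part and for the evanescent P byproducts. A secondary, smaller point: in the inductive step $\rho$ is unknown in $\Omega_2,\dots,\Omega_j$ (not only in the target layer), so the reflected/transmitted amplitudes at the intermediate interfaces do depend on unknown data; the paper devotes a separate argument to checking that this only rescales, but does not annihilate, the relevant principal amplitudes. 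Your appeal to \cite{Rachele_2000} covers the polarizations of the free-space parametrix but not the transmission coefficients at the interfaces, so this also needs the paper's additional remark.
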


\begin{proof}
We  show that one can recover the two speeds in $\Omega_3$ and then the theorem  follows by induction.

 \begin{figure}[!ht]
\includegraphics[page=20,scale=1]{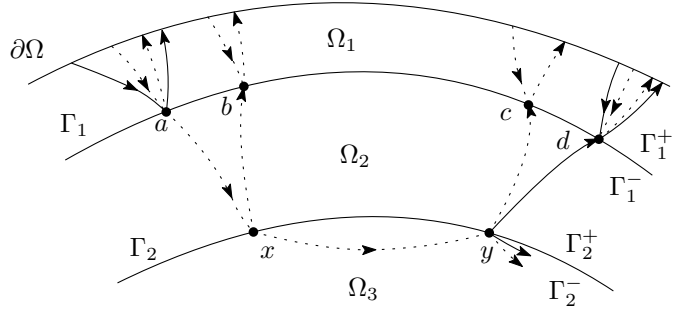}
\caption{Solid curves are P waves. Dotted curves are S waves.   We can create an S wave  connecting points $x$ and $y$ on $\Gamma_2$ so that it is an SH wave at $x$.
}\label{pic_IP3}
\end{figure}

We show that we can recover $c_s$ there first following the proof of  Lemma~\ref{lemma_G2}. 
Fix two points $x$ and $y$ on $\Gamma_2$. We keep them close enough so that the S geodesic connecting them does not touch $\Gamma_3$ (if there is $\Gamma_3$, i.e., if $k\ge3$). 
We construct a  solution $u$ below $\Gamma_2$, between $x$ and $y$, of S  type (at principal level, as everywhere in this section), see Figure~\ref{pic_IP3}.  We chose the solution to be SH at $x$ but this is not essential. At $y$, there might be reflection, transmission and mode conversion to evanescent modes. 
 Then near $x$ and at $y$ (and the corresponding times $t_1$ and $t_2$), the traces of this solution on $\Gamma_2$ is in the (HH), (HM) or the (MM) region by \r{G3'}, with the possible exception of finitely many angles giving rise to tangential rays. On the other hand, on principal level, there are  only incoming and reflected S waves at $\Gamma_2^+$ near $(t_1,x)$, satisfying the transmission conditions, and we can arrange no incoming waves at $x$ from $\Omega_3$ by the control argument for SH waves. 

We extend those microlocal solutions to $\Omega_2$ and $\Omega_1$ first as in the proof of Lemma~\ref{lemma_G2}. We do this starting from $x$ first. 
On $\Gamma_1$, each of the two S branches (meeting at $x$) are in one of the three  regions mentioned above excluding directions of measure zero). In each one of those cases, we can choose four or two waves on the top, i.e., in $\Omega_1$ which cancel a reflected wave. At $\Gamma_1^-$, we decompose all S waves into SH and SV ones. The latter can be treated as acoustic waves and can be controlled from the top. The SV ones can be controlled as well as we showed in sections~\ref{sec_HH},  \ref{sec_hm} and  \ref{sec_t_mm}. In Figure~\ref{pic_IP3}, for example, point $a$ corresponds to either an (HH) or an (HM) case; and point $b$ corresponds to an (MM) case; so does point $c$. Then we extend all waves in $\Omega_1$ to the exterior of $\Omega$, i.e., we let them leave $\Omega$.


At the point $y$, we do the same for the S and the P wave propagating into $\Omega_2$. 
For the $p$ wave (hitting $\Gamma_1$ at $d$ in Figure~\ref{pic_IP3}), we are at the (HH) zone at $\Gamma_1$, and we apply  the control argument we used before.

The so constructed microlocal solution vanishes (in this context, that means that it has no leading order singularities) for $t\ll0$, and by a  shifting $t_1$, we may assume that this happens for, say, $t<\eps$ with some $\eps>0$ (we need $\eps$ so that we can do a smooth cutoff between $t=0$ and $t=\eps$ and construct an actual solution with the same singularities). Choose $T\gg1$ so that all outgoing branches starting from $x$ or $y$ reach $\bo$ before that time. 

We are in the situation of Lemma~\ref{lemma_G2} now with $\Gamma_1$ playing the role of $\bo$ there with one difference.  
We have not recovered $\rho$ in $\Omega_2$. We  claim however that that near the micolrocal solutions along the rays hitting $x$ and $y$, $\tilde u$ (corresponding to the second system) has singularities of the same order as $u$. This follows form the following: the Cauchy data on $\R\times \Gamma_1^+$ and that on $\R\times\Gamma_1^+$ are related by the transmission conditions \r{tr}. It follows by \r{Nu} that they are related by an elliptic operator depending on $\rho$ (recall that we view the three independent coefficients as being $c_s$, $c_p$ and $\rho$). That dependence makes the refracted and the reflected amplitudes $\rho$ dependent, but it does not change the property of their principal parts being non-zero (except for specific angles). The same conclusion could have been reached by examining thse qualitative behavior of the solution of the microlocal systems, say \r{RMC4.1n} and \r{2x2} in the (HH) case, as a function of $\rho_-$. Therefore, $\tilde u$ has  leading order singularities in $\Omega_2$ along the same rays as $u$ does. By the proof of Lemma~\ref{lemma_G2}, 
$y$ is uniquely recovered as the first time the S wave from $x$ hits $\Gamma_2$ again. Then the boundary distance function related to $c_s$ in $\Omega_3$ is uniquely recovered for $x$ close to $y$, which recovers the jet of $c_s$ at $\Gamma_2^-$. Then we know the $c_s$ lens relation as well, along rays not touching $\Gamma_3$. As in the proofs above, we can detect where $\Gamma_3$ is and also recover $c_s$ in $\Omega_3$.

The recovery of $c_p$ in $\Omega_3$ goes along the same lines as the proof of Lemma~\ref{lemma_G3}, using the arguments above. 
We create a single P wave below $\Gamma_2$ connecting $x$ and $y$ and extend it until it reaches $\bo$ at both sides.  At $\Gamma_2$, we are in the (HH) case, each ray, extended upwards, will create four new ones. On the upper surfaces, we can have any of the (HH), (HM) and the (MM) cases as above. 

We can recover $\Gamma_3$ (if exists) by as in the previous lemmas. 

The proof for $k>3$ follows by induction. 
\end{proof}


\begin{remark}
Recovery of $\rho$ in $\Omega_j$, $j\ge2$ seems delicate. The arguments in \cite{Rachele03, Bhattacharyya_18} require the knowledge of the jet of $\rho$ at the boundary up to order three, which is true on $\bo$ by \cite{Rachele_2000} but proving this on $\Gamma_j^-$, $j=1,2,\dots$ seems to be not easy. 
\end{remark}

\subsection{Exploiting mode conversion; the PREM model} 
In the results above, we needed to ensure no total internal reflection of S or P waves or both, from the interior. The mode conversion was not used to obtain information, it was rather a difficulty we had to overcome. Below we show how one can use mode conversion to recover $c_p$ when the P waves are totally reflected but the refracted $S$ wave to the exterior is not. 

In the Preliminary Reference Earth Model (PREM) \cite{DZIEWONSKI1981297},  in the Upper and the Lower Mantle, the S and the P speeds increase with depth, ``on average'', except on a small interval close to the surface. At the boundary of the Lower Mantle and the Outer Core however, the P speed jumps down with depth, hence it does not satisfy \r{G6} on that interface. The S speed jumps down to zero, i.e., the Outer Core is believed to be  liquid. This violates \r{G3} on that interface (and there are no S waves in the Outer Core anyway). Therefore, the P waves in the Outer Core close to tangent ones to their upper boundary are totally reflected (as P waves only) and the results above do not apply for the recovery of $c_p$. In this case we can use mode conversion however because PREM shows that those P waves actually produce transmitted (hyperbolic) S waves into the exterior, i.e., condition \r{G10} below holds. 

An analysis of a solid-liquid model is certainly possible with the methods we develop but it is beyond the scope of this work (see also \cite{de2015system}). We will sketch arguments based on the dynamical system only assuming no S waves below $\Gamma_1$ (formally, $c_s=0$ there). \textit{Those arguments are not a proof} since we assume preservation of the microlocal properties in the limit $c_s|_{\Omega_1}\to0$. 

Assume
\be{G10}
c_s|_{\Gamma_1^+}< c_p|_{\Gamma_1^-}. 
\ee

First, we can determine the two speeds in $\Omega_1$ per Lemma~\ref{lemma_G1}.

To recover $c_p$ in $\Omega_2$, take a P geodesic in $\Omega_2$ connecting $x$ and $y$ on $\Gamma_2$, so that it does not hit $\Gamma_2$; see Figure~\ref{pic_IP5}. We can construct a microlocal solution $u$  near it so that it is obtained by an S wave in $\Omega_1$ through mode conversion at $\Gamma_1$. 
 \begin{figure}[!ht]
\includegraphics[page=17,scale=1]{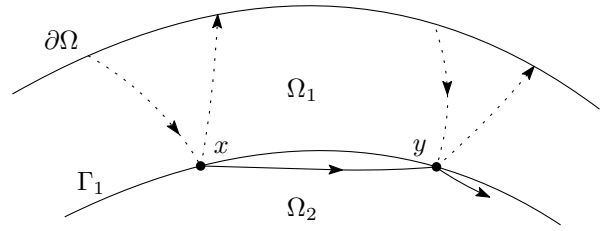}
\caption{Solid curves are P waves. Dotted curves are S waves. We can create a P wave in $\Omega_2$ connecting points on $\Gamma_1$ through mode conversion of an S wave coming from $\Omega_1$.
}\label{pic_IP5}
\end{figure}
To construct such an incoming solution, we can start with such between $x$ and $y$ and time reverse it. Then we take the S branch in $\Omega_1$, which on Figure~\ref{pic_IP5} is represented by the dashed most left incoming ray; and let it propagate. There will be a mode conversion in a a neighborhood of $x$, giving use the desired solution. It will have a non-zero principal level energy except possibly for directions of  measure zero. There might be a mode converted reflected P wave at $x$ back to $\Omega_1$, not shown on  Figure~\ref{pic_IP5}. If so, we let it propagate and exit $\Omega_1$, similarly to the reflected S wave. 
There will be a reflected P wave, and a transmitted S wave of non-zero principal energy except possibly for angles of measure zero. There might be a P wave propagating from $y$ into $\Omega_1$, not shown on the figure. We let them propagate and exit $\Omega_1$  through $\bo$. 

Since the tilded system has the same Cauchy data on $(0,T)\times\bo$ (as above, we shift the time, if needed so that the solution is smooth for $t<0$), and $c_p=\tilde c_p$, $c_s=\tilde c_2$ in $\Omega_1$ by Lemma~\ref{lemma_G1}, we get that the principal part of $u$ and $\tilde u$ coincide in the domain of influence. We recall that the principal parts do not depend on $\rho$. Then $u$ and $\tilde u$ have the same Cauchy data on $\Gamma_1^+$ as well. Then we can identify $y$ by the point where the first (in time) singularity hits $\Gamma_1$ again. The rest is as in the proof of the previous results. 


%

\end{document}